\headsep \linespread{1.3}
\newtheorem{thm}{Theorem}[section]
\newtheorem{cor}[thm]{Corollary}
\newtheorem{lem}[thm]{Lemma}
\theoremstyle{definition}
\theoremstyle{remark}
\newtheorem{rem}{Remark}[section]
\numberwithin{equation}{section}
\DeclareMathSymbol{\C}{\mathalpha}{AMSb}{"43}
\newcommand{\N}{{\mathbb N}}
\newcommand{\R}{{\mathbb{R}}}
\newcommand{\bsub}{\begin{subequations}}
\newcommand{\esub}{\end{subequations}$\!$}
\begin{document}

\title{  A coupled Hartree system with Hardy-Littlewood-Sobolev critical exponent: existence and multiplicity of high energy positive solutions}

\author{Mengyao Chen \thanks{College of Science, Wuhan University of Science and Technology,  430065,  Wuhan, China.
Email: \texttt{cmy@mails.ccnu.edu.cn}.}
\qquad
Lun Guo\thanks {
School of Mathematics and Statistics, South-Central University for Nationalities, Wuhan 430074,  China.
Email: \texttt{lguo@mails.ccnu.edu.cn}.}
\qquad
Qi Li\thanks{College of Science, Wuhan University of Science and Technology,  430065,  Wuhan, China.
Email: \texttt{qili@mails.ccnu.edu.cn}.}
}


\smallbreak \maketitle

\begin{abstract}
This paper deals with  a coupled Hartree system with Hardy-Littlewood-Sobolev critical exponent
\begin{equation*}
  \begin{cases}
    -\Delta u+(V_1(x)+\lambda_1)u=\mu_1(|x|^{-4}*u^{2})u+\beta (|x|^{-4}*v^{2})u, \ \  &x\in \R^N, \\
    -\Delta v+(V_2(x)+\lambda_2)v=\mu_2(|x|^{-4}*v^{2})v+\beta (|x|^{-4}*u^{2})v, \ \  &x\in \R^N, \\
  \end{cases}
\end{equation*}
where $N\geq 5$, $\lambda_1$, $\lambda_2\geq 0$ with $\lambda_1+\lambda_2\neq 0$, $V_1(x),  V_{2}(x)\in L^{\frac{N}{2}}(\R^N)$ are nonnegative functions and $\mu_1$, $\mu_2$, $\beta$ are positive constants. Such system arises from mathematical models in Bose-Einstein condensates theory and nonlinear optics. By variational methods combined with degree theory, we prove some results about the existence and multiplicity of high energy positive  solutions under the hypothesis $\beta>\max\{\mu_1,\mu_2\}$.\\

\end{abstract}

\vskip 0.05truein

\noindent {\it Keywords:}  Hartree system; Hardy-Littlewood-Sobolev critical exponent;  Lack of compactness; Positive solution. \\
{\bf 2020 Mathematics Subject Classification }: 35B33, 35J47, 35J50.
\vskip 0.1truein

\section{Introduction and main results}

\qquad In this paper, we look for standing waves of the two-component nonlinear Hartree system
\begin{equation}\label{S-2}
\begin{cases}
  i\frac{\partial\Phi_1}{\partial t}=-\Delta\Phi_{1}+V_1(x)\Phi_1-\mu_1(K(x)*|\Phi_1|^{2})\Phi_1-\beta(K(x)*|\Phi_2|^{2})\Phi_1, \\
  i\frac{\partial\Phi_2}{\partial t}=-\Delta\Phi_{2}+V_2(x)\Phi_{2}-\mu_2(K(x)*|\Phi_2|^{2})\Phi_2-\beta(K(x)*|\Phi_1|^{2})\Phi_2,  \\
  (x,t)\in \R^N \times  \R^{+}, \,\, \Phi_{j}=\Phi_{j}(x,t) \in \mathbb{C}, \ \ \ \Phi_{j}(x,t)\to 0, \, \,   \text{as} \,\, |x|\to +\infty,  \,\, j=1,2,
\end{cases}
\end{equation}
where $i$ is the imaginary unit, $\Phi_j$ $(j=1,2)$ is the corresponding condensate amplitudes,  $K(x)$ is a nonnegative function which possesses information about the self-interaction between the particles, $\mu_j$ is the interspecies scattering length and coupling constant $\beta$ is the  intraspecies scattering length: $\mu_{j}>0$ corresponds to the attractive and $\mu_{j}<0$ to the repulsive self-interactions; similarly,  the coupling constant $\beta>0$ corresponds to the attraction and $\beta<0$ to the repulsion between the two components in  the system,   for more details we refer to  \cite{EGBB,T}. The system \eqref{S-2}  can also  be found in the studies of nonlinear optics  \cite{AA}. Physically, the solution $\Phi_j$ in system \eqref{S-2}  denotes the $j$-th component of the beam in Kerr-like photorefractive media. The positive constant $\mu_j$ indicates the self-focusing strength in the component of the beam, and  the coupling constant $\beta$ measures the interaction between the first and the second component of the beam.


In order to find standing waves  of the form
\begin{equation}\label{Guo-add}
(\Phi_1(x,t),\Phi_2(x,t))=(e^{i\lambda_1t}u(x), e^{i\lambda_2t}v(x)),
\end{equation}
then we are led to study the  solutions of the following problem
\begin{equation}\label{eqs1.3}
  \begin{cases}
    -\Delta u+(V_1(x)+\lambda_1)u=\mu_1(K(x)*u^{2})u+\beta (K(x)*v^{2})u, \ \  &x\in \R^N, \\
    -\Delta v+(V_2(x)+\lambda_2)v=\mu_2(K(x)*v^{2})v+\beta (K(x)*u^{2})v, \ \  &x\in \R^N. \\
  \end{cases}
\end{equation}

For any $\beta\neq 0$, the system \eqref{eqs1.3} possesses a trivial solution $(0,0)$ and a pair of semi-trivial solutions with one component being zero, which have the form $(u,0)$ or $(0,v)$. Usually, we look for solutions of system \eqref{eqs1.3} which are different from the preceding ones.  A solution $(u,v)$ such that $u\neq 0, v\neq 0$, is non-trivial and  $u>0$, $v>0$, respectively positive solution. A solution is called a ground state solution if  its energy is minimal among the energy of all the non-trivial solutions of system \eqref{eqs1.3}.


Let $K(x)$ be a Dirac-delta function, that is $K(x)=\delta(x)$, then  system \eqref{eqs1.3} transforms  into the following Schr\"{o}dinger system
\begin{equation}\label{S-1.3}
   \begin{cases}
    -\Delta u+(V_1(x)+\lambda_1)u=\mu_1u^3+\beta v^{2}u, \ \  x\in \R^N, \\
    -\Delta v+(V_2(x)+\lambda_2)v=\mu_2v^{3}+\beta u^{2}v, \ \  x\in \R^N. \\
  \end{cases}
  \end{equation}
Due to the important application in physics, the system \eqref{S-1.3} in low dimensions $(N=1,2,3)$ has been widely investigated. Not only the existence but also the qualitative properties of solutions for system \eqref{S-1.3} have been established by researchers via  variational methods, Lyapunov-Schmidt reduction method or bifurcation method,  see for example \cite{AC1,AC2,BD,BWW,CZ1,DL, DWW, LW1,LW3,LW4,MPi,NT,PW,SW,S1,WW2} and the references therein. Note that, if $N \leq 3$, the nonlinearity and  coupling terms in system \eqref{S-1.3} are of subcritical growth with respect to Sobolev critical exponent, the cubic nonlinearities and coupled terms are all of critical growth for $N=4$ and  super-critical growth  for $N\geq 5$. Here we only introduce some results  closely related to our paper. Chen and Zou \cite{CZ2,CZ3} considered the following Br\'{e}zis-Nirenberg type problem
\begin{equation}\label{CZ}
  \begin{cases}
    -\Delta u+ \lambda_1u=\mu_1 u^{2^*-1}+\beta u^{\frac{2^*}{2}-1}v^{\frac{2^*}{2}}, \,\, x\in \Omega \\
     -\Delta v+ \lambda_2v=\mu_2 v^{2^*-1}+\beta u^{\frac{2^*}{2}} v^{\frac{2^*}{2}-1},   \,\, x\in \Omega  \\
    u, v\geq 0  \,\, x\in \Omega, \ \  u=v=0 \,\, x\in \partial\Omega
  \end{cases}
\end{equation}
where $\Omega \subset \R^N$ is a smooth bounded domain, $2^*=\frac{2N}{N-2}$ is the critical Sobolev exponent, $-\lambda_1(\Omega)<\lambda_1,\lambda_2<0$, $\mu_{1},\mu_2 >0$ and $\beta\neq 0$.  They  established the existence, uniqueness and limit behaviour of positive ground state solutions in some ranges of $\lambda_1, \lambda_2, \beta$.  It turned out that results in the higher dimensions are quite different from those in $N=4$.  Recently, some existence and multiplicity results for a Coron type system \eqref{CZ} in a bounded domain with one or multiple small holes were  obtained in \cite{PPW,PS}. In contrast, there are  very few results for system \eqref{S-1.3} on the whole space $\R^4$.
 Wu and Zou \cite{WZ}  proved the existence of positive ground state solutions for system \eqref{S-1.3} with steep potential wells. Moreover, they studied the phenomenon of phase separation of ground state solutions  as $\beta\to -\infty$.   The positive solutions of system \eqref{S-1.3} are  also considered at higher energy levels than the ground state energy level, see  \cite{CP1,LL} for example.  By considering the functional constrained on a subset of the Nehari manifold consisting of functions invariant with respect to a subgroup of  $O(N+1)$,   Clapp and Pistoia  \cite{CP1} proved that system \eqref{S-1.3} has infinitely many fully nontrivial solutions, which are not conformally equivalent. Let $\lambda_1=\lambda_2=0$ and $N=4$ in system \eqref{S-1.3},  Liu and Liu \cite{LL} proved  the existence of  positive solutions once that $\|V_1\|_{L^2}+\|V_2\|_{L^2}$ is small enough and $\beta>\max\{\mu_1, \mu_2\}$. It extends the celebrated work for semilinear Schr\"{o}dinger equation by Benci and Cerami \cite{BC} to Schr\"{o}dinger system \eqref{S-1.3}. Suppose that  $\lambda_1=\lambda_2>0$ in system \eqref{S-1.3}, the existence result in \cite{LL} was extended by  Guo et al. \cite{GLLM}. More novelly, by establishing a new type of global compactness result,  they overcame the loss of compactness of Palais-Smale sequence, and succeeded in proving the multiplicity  of positive solutions via variational methods.

In the following,  we are concerned about system \eqref{eqs1.3} with a Riesz potential response function $K(x)$,  more precisely, $K(x)=|x|^{-\alpha}$, $\alpha\in (0,N)$ and
 \begin{equation}\label{S-1.5}
   \begin{cases}
    -\Delta u+(V_1(x)+\lambda_1)u=\mu_1(|x|^{-\alpha}*u^{2})u+\beta (|x|^{-\alpha}*v^{2})u, \ \  x\in \R^N, \\
    -\Delta v+(V_2(x)+\lambda_2)v=\mu_2(|x|^{-\alpha}*v^{2})v+\beta (|x|^{-\alpha}*u^{2})v, \ \  x\in \R^N. \\
  \end{cases}
  \end{equation}
Clearly, semi-trivial solutions of system \eqref{S-1.5} correspond to solutions of Choquard-Pekar type equation
\begin{equation}\label{S-1.6}
 -\Delta u+(V_j(x)+\lambda_j)u=\mu_j(|x|^{-\alpha}*u^{2})u, \ \  x\in \R^N, \ \ j=1,2.
\end{equation}
Such a equation appeared as early as 1954, in a work by Pekar describing the quantum mechanics of apolaron at rest, see \cite{P}. We also refer to \cite{L,MPT} for more physical backgrounds. In the last few decades, the  equation \eqref{S-1.6} and its general form have been extensively investigated, not only  on subcritical case \cite{ANY,L,L1,MZ,M,MS1,MS2}, but also critical case \cite{AFM,DY,GSYZ,GY,GHPS}.

Compared with  a single equation, there are very few results available on the coupled Hartree system  \eqref{S-1.5}. The first attempt is due to Yang, Wei and Ding \cite{YWD},  they studied a singular perturbed problem related to system \eqref{S-1.5} and proved the existence of ground state solutions for $\beta$ large enough. Recently, Wang and Shi \cite{WS} proved  the existence and non-existence of ground state solutions to system  \eqref{S-1.5} with $\alpha=1$, $N=3$. The  existence of normalized solutions to  system \eqref{S-1.5} was proved by Wang and Yang \cite{WY} under suitable ranges for the parameters $\mu_1,\mu_2,\beta$. All the papers mentioned above deal with the subcritical case.  For the critical case $\alpha=4$ (the nonlocal terms in system \eqref{S-1.5} are critical  with respect to Hardy-Littlewood-Sobolev inequality, see Lemma 2.1 below), Gao et al. \cite{GLMY} proved that system \eqref{S-1.5} exists a positive solution with its functional energy lying in $(c_{\infty}, \min\{\frac{\mathcal{S}_{HL}^{2}}{4\mu_1}, \frac{\mathcal{S}_{HL}^{2}}{4\mu_2}, 2c_{\infty} \})$,  if  $\lambda_1=\lambda_2=0$ and  $V_{1}, V_2$ satisfy\\
$(C_1)$\,$V_{1}(x),V_{2}(x)\geq 0$, $\forall x\in \R^N$; \\
$(C_2)$\,$V_1(x), V_{2}(x)\in L^{\frac{N}{2}}(\R^N)\cap L_{loc}^{\infty}(\R^N)$;\\
$(C_3)$\,$V_1(x)$, $V_{2}(x)$ satisfy
\begin{align*}\label{Guo-New}
 0&< \frac{\beta-\mu_2}{2\beta-\mu_1-\mu_2}C(N,4)^{-\frac{1}{2}}\|V_1\|_{L^{\frac{N}{2}}}+\frac{\beta-\mu_1}{2\beta-\mu_1-\mu_2}C(N,4)^{-\frac{1}{2}}\|V_2\|_{L^{\frac{N}{2}}} \nonumber \\
  & < \min \Big\{\sqrt{\frac{\beta^2-\mu_1\mu_2}{\mu_1(2\beta-\mu_1-\mu_2)}}, \sqrt{\frac{\beta^2-\mu_1\mu_2}{\mu_2(2\beta-\mu_1-\mu_2)}}, \sqrt{2} \Big\} \mathcal{S}_{HL}-\mathcal{S}_{HL},
\end{align*}
where $c_{\infty}$ denotes by the ground state energy level and
$$
\mathcal{S}_{HL}:=\underset{u\in D^{1,2}(\R^N)\backslash\{0\}}{\inf}\frac{\int_{\R^N}|\nabla u|^2dx}{(\int_{\R^N}(|x|^{-4}*u^2)u^2dx)^{\frac{1}{2}}}.
$$
Note that $V_j\in L^\frac{N}{2}(\R^N)$ and $\lambda_j=0$, $j=1,2$,  imply that the only limit problem for \eqref{S-1.5} is
$$
  \begin{cases}
     -\Delta u=\mu_1(|x|^{-\alpha}*u^{2})u+\beta (|x|^{-\alpha}*v^{2})u, \ \  x\in \R^N, \\
    -\Delta v=\mu_2(|x|^{-\alpha}*v^{2})v+\beta (|x|^{-\alpha}*u^{2})v, \ \  x\in \R^N, \\
    u,v\in D^{1,2}(\R^N),
  \end{cases}
$$
 and this fact was a key ingredient in the approach of \cite{GLMY}.

On the other hand, $\lambda_j>0$ is very significant by its physical meaning, as one can readily seen in the ansatz \eqref{Guo-add}. In this paper,
we are ready  to investigate the existence and multiplicity  of positive  solutions for system \eqref{S-1.5} in such cases. Let us consider  the following coupled Hartree system with Hardy-Littlewood-Sobolev critical exponent 
\begin{equation}\label{S}
   \begin{cases}
    -\Delta u+(V_1(x)+\lambda_1)u=\mu_1(|x|^{-4}*u^{2})u+\beta (|x|^{-4}*v^{2})u, \ \  &x\in \R^N, \\
    -\Delta v+(V_2(x)+\lambda_2)v=\mu_2(|x|^{-4}*v^{2})v+\beta (|x|^{-4}*u^{2})v, \ \  &x\in \R^N. \\
  \end{cases}
  \end{equation}
Here and always in the sequel we assume that  $N\geq5$,  constants $\lambda_j\geq 0$, $\mu_{j}>0$,  $\beta \in \R  \setminus \{0\}$, $j=1,2$ and potentials $V_j(x)$ satisfy
\\
$(A_1)$ \, $V_{1}(x),V_{2}(x)\geq 0$, $\forall x\in \R^N$, \ $V_1(x)+V_2(x)\not\equiv 0$;
\\
$(A_2)$ \, $V_1(x), V_{2}(x)\in L^{\frac{N}{2}}(\R^N)\cap L_{loc}^{q}(\R^N)$, $q>\frac{N}{2}$;\\
$(A_3)$ \, $V_1(x)$, $V_{2}(x)$ verify
\begin{align*} \label{V}
  &\frac{\beta-\mu_2}{2\beta-\mu_1-\mu_2}\|V_1\|_{L^\frac{N}{2}}+\frac{\beta-\mu_1}{2\beta-\mu_1-\mu_2}\|V_2\|_{L^{\frac{N}{2}}} \nonumber \\
  & < \min \left\{\sqrt{\frac{\beta^2-\mu_1\mu_2}{\mu_1(2\beta-\mu_1-\mu_2)}},\,  \sqrt{\frac{\beta^2-\mu_1\mu_2}{\mu_2(2\beta-\mu_1-\mu_2)}},\,  \sqrt{2} \right\} \mathcal{S}-\mathcal{S},
\end{align*}
where $\mathcal{S}$ is the best constant in the  Sobolev embedding $D^{1,2}(\R^N)\hookrightarrow L^{2^*}(\R^N)$.

We remark  that, if  $V_{j}(x)+\lambda_{j}\geq 0$, but $V_{j}(x)+\lambda_{j}\not\equiv 0$ for $j=1,2$, then it is easy to prove that system \eqref{S} does not have any ground state solutions (see Lemma \ref{lm2.6} below for computation details). Therefore we must look for solutions at higher energy levels than ground state energy level. The main results of  our paper are stated as follows.




\begin{thm}\label{Th1.2}
Let $\beta>\max\{\mu_1,\mu_2\}$, $\lambda_1, \lambda_2\geq 0$ and $\lambda:=\max\{\lambda_1, \lambda_2\}>0$.  \\
$(i)$\,Suppose that $V_1(x),V_2(x)$ satisfy $(A_1)$-$(A_2)$, then there exists $\lambda^*>0$, such that if $\lambda\in (0,\lambda^*)$,    system \eqref{S} exists at least a positive  solution. \\
$(ii)$\,Suppose that $V_1(x),V_{2}(x)$ satisfy $(A_1)$-$(A_3)$, then  there also exists $\lambda^{**}>0$ with $\lambda^{**}<\lambda^* $ such that if $\lambda \in (0, \lambda^{**})$,  system \eqref{S} has at least two distinct positive  solutions.
\end{thm}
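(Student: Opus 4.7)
The plan is to work variationally on the Hilbert space $H:=H^1(\R^N)\times H^1(\R^N)$ equipped with the norm induced by the quadratic part of the system, and to consider the energy functional
\[
I(u,v)=\tfrac12\!\int_{\R^N}\!\bigl(|\nabla u|^2+(V_1+\lambda_1)u^2+|\nabla v|^2+(V_2+\lambda_2)v^2\bigr)dx-\tfrac14\!\int_{\R^N}\!F(u,v)\,dx,
\]
where $F(u,v)=\mu_1(|x|^{-4}*u^2)u^2+\mu_2(|x|^{-4}*v^2)v^2+2\beta(|x|^{-4}*u^2)v^2$, restricted to the Nehari manifold $\mathcal{N}$ of \eqref{S}. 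Because $V_j\in L^{N/2}(\R^N)$ decays at infinity, the natural ``problem at infinity'' is the same system with $V_1=V_2=0$; thanks to $\lambda=\max\{\lambda_j\}>0$ and $\beta>\max\{\mu_1,\mu_2\}$ this limit system admits a positive ground state $(U_\infty,V_\infty)$ at a level $c_\infty>0$, which by Hardy--Littlewood--Sobolev and the Sobolev constant $\mathcal{S}$ lies strictly below the Hartree-critical bubble energies. As the cited Lemma~\ref{lm2.6} rules out a ground state of \eqref{S}, any solution must lie above $c_\infty$.

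The main step, and the principal obstacle, is a global compactness / profile-decomposition theorem adapting \cite{GLLM} to the nonlocal Hartree-critical setting: any Palais--Smale sequence for $I$ at level $c$ splits, after extraction, as the sum of a weak-limit solution of \eqref{S}, finitely many translated copies of solutions of the limit system, and possibly rescaled ``Hartree-critical bubbles'' — positive solutions of the system in $\R^N$ with $V_j=\lambda_j=0$. This requires a Brezis--Lieb type identity for the Riesz convolution, uniform control of the decay of the bubble and limit profiles in order to handle the coupling term $\beta(|x|^{-4}*u^2)v^2$ under weak convergence and translation, and an exclusion argument for trivial and semi-trivial profiles. The outcome is a discrete list of forbidden PS-levels and a compactness threshold $c^*$; condition $(A_3)$ is then precisely what arranges the bubble energies $\mathcal{S}^2/(4\mu_j)$ and the Sobolev cross levels so that an interval of compactness lies strictly above $c_\infty$.

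For part~(i) I would run a mountain-pass/linking scheme on $\mathcal{N}$ using the translates $(U_\infty(\cdot-y),V_\infty(\cdot-y))$ with $|y|$ large as test elements. Expanding $I$ at these translates, exploiting $V_j\in L^{N/2}$ to keep the interaction terms controllable, and choosing $\lambda$ sufficiently small, I would bound the min-max level $c_1\in(c_\infty,c^*)$ and extract a critical point, then upgrade to a positive solution by passing to $(|u|,|v|)$ and invoking the strong maximum principle together with $\beta>\max\{\mu_1,\mu_2\}$. For part~(ii) I would follow the Benci--Cerami strategy: introduce a barycenter map $\mathcal{B}:\mathcal{N}\cap\{I\le c_\infty+\delta\}\to\R^N$ and, via the translates of $(U_\infty,V_\infty)$, construct a continuous map from a large sphere in $\R^N$ into this sublevel set whose composition with $\mathcal{B}$ is homotopic to the identity on that sphere. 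A Brouwer-degree / topological-linking argument then yields a second critical value $c_2$ above $c_\infty+\delta$ but still below $c^*$ and distinct from $c_1$; producing such a value forces the stricter bound $\lambda^{**}<\lambda^*$ and uses the quantitative estimate $(A_3)$ to keep $c_\infty+\delta$ within the compactness interval, while the second solution is again made positive by the maximum principle.
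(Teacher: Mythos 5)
You have the right general skeleton (Nehari manifold, global compactness, a Benci--Cerami style barycenter argument, strong maximum principle for positivity), and you correctly invoke Lemma~\ref{lm2.6} to rule out a ground state. But the construction collapses at its starting point: you identify the ``problem at infinity'' as the system with $V_1=V_2=0$ but $\lambda_j$ retained, and claim it has a positive ground state $(U_\infty,V_\infty)$ whose translates you would use as test elements. By the Pohozaev identity (Lemma~\ref{lm2.5}), that system has \emph{no} nontrivial solution when $\lambda_1,\lambda_2>0$, and only semi-trivial ones when exactly one $\lambda_j$ vanishes. There is simply no profile to translate, and neither the mountain-pass scheme built from $(U_\infty(\cdot-y),V_\infty(\cdot-y))$, nor the sphere-of-translates barycenter map for part~(ii), can be set up.

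The obstruction is of a different nature than you assumed. The loss of compactness here is not translational (subcritical-type) but concentration-driven (critical): the true limit problem is the fully scale-invariant system \eqref{S2} with $V_j=\lambda_j=0$, whose ground states are the bubbles $(\sqrt{k_1}U_{\delta,z},\sqrt{k_2}U_{\delta,z})$. These live in $D^{1,2}(\R^N)\times D^{1,2}(\R^N)$, never in $H^1\times H^1$, and appear in the Palais--Smale decomposition (Theorem~\ref{th3.1}) as rescaled bubbles $(\sigma_n^{k})^{-(N-2)/2}u^k(\cdot/\sigma_n^k)$ with $\sigma_n^k\to0$. To capture them, the topological argument must carry a scale parameter as well as a center: the paper constructs a test family $\vartheta_{\delta,y}$ indexed by $(\delta,y)$ in an $(N+1)$-dimensional region $\mathscr{H}\subset\R^+\times\R^N$, and pairs the barycenter $\xi$ with a concentration functional $\gamma$, so that $(\gamma,\xi)$ maps into $\R\times\R^N$ and the Brouwer degree is computed on an $(N+1)$-dimensional homotopy. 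Your translation-only map into a sphere in $\R^N$ is one dimension short and would not yield a nontrivial degree or linking. Finally, when exactly one of $\lambda_1,\lambda_2$ vanishes the correct work space is $H^1\times D^{1,2}$ (or its transpose), and the PS decomposition acquires an additional family of translating semi-trivial bubbles $(w^k(\cdot-z_n^k),0)$; your uniform choice of $H^1\times H^1$ silently excludes those cases of the theorem.
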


\begin{rem}
In the proof of Theorem \ref{Th1.2}, the requirement $V_1(x), V_{2}(x)\in  L_{loc}^{q}(\R^N)$, $q>\frac{N}{2}$ is used only in
showing that a non-trivial non-negative solution of system \eqref{S} is indeed a positive solution of system \eqref{S} by the Harnack iquality in \cite{PSbook}.
\end{rem}



 In the case $\lambda_1=\lambda_2=0$,  Gao et al. \cite{GLMY} obtained a high energy positive  solution of system \eqref{S} by using Linking theorem combined with a global compactness result. Compared with this, the problem we considered is much more difficult due to the existence of potential terms $\lambda_1u$ and $\lambda_2v$.
The major difficulties and challenges we faced  are:  first,  the main aim in our paper is to investigate the multiplicity of positive solutions of system \eqref{S}.   However, the classical  Linking theorem can only prove the existence of Palais-Smale sequences, but fail to obtain the multiplicity of Palais-Smale sequences. In the paper \cite{CP2}, Cerami and Passaseo developed a new variational trick and proved the multiplicity of solutions for scalar equation with Neumann boundary in half space $\R^N_{+}$ under similar hypotheses as we stated. Also,  the similar arguments are used  to study the other equations, see \cite{AFM,CM,GL} for example. Obviously, the methods developed for the scalar equation are not directly applicable to system \eqref{S}, several difficulties arise because of  the nonlocal  interaction terms  with critical exponent. Second,  dealing with such kind of critical elliptic system, we have to    overcome the loss of compactness and  distinguish the solutions from the semi-trivial ones.  To the best of our current knowledge, the only available compactness result associated with system \eqref{S} is given by Gao et al. \cite{GLMY}.  Unfortunately, the global compactness result in \cite{GLMY} can not work well on our problem, since the work space we chosen is totally different from the setting $D^{1,2}(\R^N)\times D^{1,2}(\R^N)$ in \cite{GLMY}. The loss of compactness makes the study of system \eqref{S} very complicated and substantially different, some new ideas must be introduced. In order to overcome the lack of compactness, we adapt some ideas from \cite{CP2} together with new techniques of analysis for Hartree system,  and give a complete description for the Palais-Smale sequences of the corresponding energy functional in the product spaces $H^{1}(\R^N)\times H^{1}(\R^N)$ and $H^{1}(\R^N)\times D^{1,2}(\R^N)$. This is the first attempt in this direction, and our result in this aspect is new and original.  To the end, adopting this description, we  succeed in proving the existence and multiplicity of  positive solutions via degree theory, which can be seen as an improvement of the existence results by Gao et. al \cite{GLMY}.

The paper is organized as follows. In section 2,  we introduce some preliminary  results and obtain a nonexistence result. In section 3, we establish a global compactness result and investigate the behavior of Palais-Smale sequences of $I$.  In section 4,  we make the proof of some technical lemmas that will be used in Section 5.  At end of this paper, we will prove our main results.   Our notations are standard. We will use $C$, $C_{i}$, $i\in\mathbb{N}$, to denote different positive constants from line to line.

%

\medskip
\section{Preliminary  results  and nonexistence result}
\setcounter{equation}{0}


\qquad Let $X$ and $Y$ be two
 Banach spaces with norms $\|\cdot\|_{X}$ and $\|\cdot\|_{Y}$,   we define the standard norm on the product space $X\times Y$ as following
 $$
 \|(x,y)\|_{X\times Y}^{2}=\|x\|_{X}^{2}+\|y\|_{Y}^{2}.
 $$
 Particularly, if $X$ and $Y$ are two Hilbert spaces with inner products $\langle \cdot,\cdot\rangle_{X}$ and
$\langle\cdot,\cdot\rangle_{Y}$, then $ X\times Y$ is also a Hilbert space with the inner product
 $$
 \langle(x,y),(\varphi,\phi)\rangle_{X\times Y}=\langle x,\varphi\rangle_{X}+\langle y,\phi\rangle_{Y}.
 $$

Let the  Sobolev space $D^{1,2}(\R^N)$ be endowed with the norm and inner product respectively
$$\|u\|_{D^{1,2}}^{2}=\int_{\R^N}|\nabla u|^{2}dx,  \ \ \   \langle u, v\rangle_{D^{1,2}}= \int_{\R^N}\nabla u \cdot \nabla v dx.$$
Then we define the product space $H_{0}:=D^{1,2}(\R^N)\times D^{1,2}(\R^N)$ with norm
\begin{equation*}
  \|(u,v)\|_{H_0}^2:= \|(u,v)\|_{D^{1,2}\times D^{1,2}}^{2}=\int_{\R^N}\big(|\nabla u|^{2}+|\nabla v|^{2}\big)dx.
\end{equation*}
Recalling that the standard norm of $H^{1}(\R^N)$ be endowed with the norm
\begin{equation*}
  \|u\|_{H^1}^2:=\int_{\R^N}(|\nabla u|^{2}+u^{2})dx.
\end{equation*}
 Obviously, if $\lambda_1,\lambda_2>0$,  then $\|\cdot\|_{\lambda_j}$, $j=1,2$,  are equivalent to $\|\cdot\|_{H^1}$, where
  $$
 \|u\|_{\lambda_j}^2:=\int_{\R^N}(|\nabla u|^{2}+\lambda_j u^{2})dx.
 $$
In this paper, the work space we chosen depends on the sign of $\lambda_{j}$ ($j=1,2$).
Here and in the sequel we always require  $\lambda_1\geq0$, $\lambda_2 \geq 0$ and $\lambda:=\max\{\lambda_1,\lambda_2\}>0$.  Set
\begin{equation*}
  \|(u,v)\|_{H}^{2}: = \|u\|_{\lambda_1}^2+ \|v\|_{\lambda_2}^2,
\end{equation*}
where  $H$ is defined by
\begin{equation*}
H=
\begin{cases}
H^1(\R^N)\times H^{1}(\R^N),\qquad &\text{if}\ \lambda_1>0,\lambda_2>0,\\
H^1(\R^N)\times D^{1,2}(\R^N),\qquad &\text{if}\ \lambda_1>0,\lambda_2=0,\\
D^{1,2}(\R^N)\times H^{1}(\R^N),\qquad &\text{if}\ \lambda_1=0,\lambda_2>0.
\end{cases}
\end{equation*}


Let $u^{+}:=\max\{0,u\}$ and $u^{-}:=\max\{0,-u\}$, $u=u^{+}-u^{-}$ and similarly for $v$, $v=v^{+}-v^{-}$. In order to obtain the positive solutions of system \eqref{S},  we are ready to  consider the modified   system
\begin{equation}\label{S-4}
  \begin{cases}
    -\Delta u+(V_1(x)+\lambda_1)u=\mu_1(|x|^{-4}*|u^+|^2)u^++\beta (|x|^{-4}*|v^+|^2)u^+, \ \ x\in \R^N, \\
    -\Delta v+(V_2(x)+\lambda_2)v=\mu_2(|x|^{-4}*|v^+|^2)v^++\beta (|x|^{-4}*|u^+|^2)v^+, \ \ x\in \R^N.
  \end{cases}
\end{equation}
If $(u,v)$ is a solution of system \eqref{S-4},  multiplying the first equation by $u^-$ and the second equation by $v^-$ in system \eqref{S-4} and integrating on $\R^N$, then we get
\begin{equation*}
  \int_{\R^N} \big(|\nabla (u^-)|^{2}+|\nabla (v^-)|^2\big)dx+\int_{\R^N}(V_1(x)+\lambda_1)(u^{-})^{2}dx
  +\int_{\R^N}(V_2(x)+\lambda_2)(v^{-})^{2}dx=0,
\end{equation*}
which implies that  $u\geq 0$, $v\geq 0$. By assumptions $(A_1)$-$(A_2)$ and the strong maximum principle,   we can prove $u>0$, $v>0$. Thus, $(u,v)$ is a positive solution of  system \eqref{S}. In the sequel, we focus our attention on the modified system \eqref{S-4} and shall to prove the  existence and multiplicity of non-trivial  solutions to system \eqref{S-4} by variational methods.
Next, we first introduce the well-known Hardy-Littlewood-Sobolev inequality, which will be frequently used,  see \cite{LL1}.

\begin{lem}
(Hardy-Littlewood-Sobolev inequality) Suppose  $\alpha \in (0,N)$, and $p$, $r>1$  with $ \frac{1}{p}+\frac{1}{r}+\frac{\alpha}{N}=2$. Let $f\in L^{p}(\R^N)$, $g\in L^{r}(\R^N)$, then there exists a sharp constant $C(p,r,\alpha,N)$, independent of $f$ and $g$, such that
\begin{equation}\label{add2.1}
\Big|\int_{\R^N}\int_{\R^N}\frac{f(x)g(y)}{|x-y|^{\alpha}}dxdy \Big| \leq C(p,\alpha,r,N)\|f\|_{L^p}\cdot \|g\| _{L^r},
\end{equation}
where $\|\cdot\|_{L^p}=\left(\int_{\R^N}|u|^{p}dx\right)^{\frac{1}{p}}$. If $p=r=\frac{2N}{2N-\alpha}$, then
\begin{equation*}
C(p,r,\alpha,N)=C(N,\alpha)=\pi^{\frac{\alpha}{2}}\frac{\Gamma(\frac{N-\alpha}{2})}{\Gamma(\frac{2N-\alpha}{2})}
\Big\{\frac{\Gamma(\frac{N}{2})}{\Gamma(N)}\Big\}^{-\frac{N-\alpha}{N}}.
\end{equation*}
In this case, the equality in \eqref{add2.1} is achieved if and only if $f\equiv \text{(const.)}g$ and
\begin{equation*}
g(x)=A(\widetilde{\gamma}^{2}+|x-\widetilde{a}|^{2})^{-\frac{(2N-\alpha)}{2}}
\end{equation*}
for some $A\in \mathbb{C}$,  $\widetilde{a}\in \R^N$ and  $0\neq \widetilde{\gamma }\in \R$.
 \end{lem}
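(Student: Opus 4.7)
The plan is to establish the inequality first in a non-sharp form by real interpolation, and then identify the sharp constant and extremizers in the conformal case $p=r=\tfrac{2N}{2N-\alpha}$ by symmetrization combined with the conformal invariance of the bilinear form.

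\emph{Step 1 (non-sharp inequality).} Rewrite the bilinear form as $B(f,g)=\int_{\R^N} g(x)\,(I_\alpha f)(x)\,dx$, where $I_\alpha f(x)=\int_{\R^N}|x-y|^{-\alpha}f(y)\,dy$ is the Riesz potential. Since $|x|^{-\alpha}$ belongs to the weak Lebesgue space $L^{N/\alpha,\infty}(\R^N)$ with easily computed quasi-norm, O'Neil's version of Young's convolution inequality on Lorentz spaces gives that $I_\alpha:L^p(\R^N)\to L^{q,\infty}(\R^N)$ is bounded for $1<p<N/\alpha$ with $\tfrac{1}{q}=\tfrac{1}{p}-\tfrac{\alpha}{N}$. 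Applying the Marcinkiewicz interpolation theorem between two such weak-type estimates upgrades this to a strong $L^p\to L^q$ bound, and then H\"older's inequality against $g\in L^r(\R^N)$, with $\tfrac{1}{r}=1-\tfrac{1}{q}$, produces exactly the scaling relation $\tfrac{1}{p}+\tfrac{1}{r}+\tfrac{\alpha}{N}=2$ and yields \eqref{add2.1} with some finite constant $C(p,r,\alpha,N)$.

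\emph{Step 2 (reduction to radial decreasing functions in the diagonal case).} For the sharp constant in the case $p=r=\tfrac{2N}{2N-\alpha}$, note that replacing $f,g$ by their symmetric decreasing rearrangements $f^*,g^*$ leaves $\|f\|_{L^p}$ and $\|g\|_{L^r}$ unchanged while, by the Riesz rearrangement inequality applied to the nonincreasing kernel $|x|^{-\alpha}$, the bilinear form only increases. Hence any maximizing pair may be assumed nonnegative, radial, and radially nonincreasing.

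\emph{Step 3 (conformal invariance and identification of the extremizer).} Pull back to the sphere $S^N$ via inverse stereographic projection $\pi:S^N\setminus\{\text{north pole}\}\to\R^N$. A direct computation gives $|\pi(\xi)-\pi(\eta)|=2|\xi-\eta|(1+|\pi(\xi)|^2)^{-1/2}(1+|\pi(\eta)|^2)^{-1/2}$, and the Jacobian of $\pi$ absorbs the conformal weights precisely when $p=r=\tfrac{2N}{2N-\alpha}$, so the normalized ratio $B(f,g)/(\|f\|_{L^p}\|g\|_{L^r})$ is transported to an analogous ratio on $S^N$ that is invariant under the full conformal (M\"obius) group. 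On $S^N$, compactness of the rotation group and Lions' concentration-compactness principle (applied modulo the conformal action) yield existence of a maximizer; combining with Step 2 forces the maximizer to be constant on $S^N$. Pulling constants back through $\pi$ gives precisely the family $g(x)=A(\widetilde\gamma^2+|x-\widetilde a|^2)^{-(2N-\alpha)/2}$. Substituting this function into both sides and evaluating using the Beta-function identity $\int_{\R^N}(1+|x|^2)^{-s}\,dx=\pi^{N/2}\Gamma(s-N/2)/\Gamma(s)$ produces the explicit value of $C(N,\alpha)$ stated in the lemma.

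\emph{Main obstacle.} The delicate step is Step 3: the conformal symmetry group is non-compact on $\R^N$, so a direct maximizing sequence can concentrate at a point or spread to infinity. Transporting the problem to $S^N$ is exactly what tames this, replacing dilations and translations by compact rotations; once a maximizer is produced there, the combination of Riesz rearrangement (forcing radial symmetry about some pole) and conformal invariance (allowing that pole to be moved anywhere) forces the extremizer to be constant on $S^N$, which is the crucial rigidity needed to pin down both the sharp constant and the full family of extremals.
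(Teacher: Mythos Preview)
The paper does not prove this lemma at all: it is stated as the classical Hardy--Littlewood--Sobolev inequality and simply cited from Lieb--Loss \cite{LL1}. Your outline is therefore not competing with anything in the paper itself; rather, you have sketched what is essentially Lieb's original proof of the sharp constant and extremizers (weak-type bound plus interpolation for the non-sharp inequality, Riesz rearrangement to reduce to radial decreasing competitors, and then the stereographic lift to $S^N$ to exploit conformal invariance). That is the argument one finds in the cited reference, so your approach is correct and matches the intended source.

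One small caution on Step~3: the sentence ``compactness of the rotation group and Lions' concentration-compactness principle \ldots\ yield existence of a maximizer; combining with Step~2 forces the maximizer to be constant on $S^N$'' compresses two nontrivial and somewhat distinct arguments. Existence of a maximizer on $S^N$ still requires ruling out loss of mass under the noncompact conformal group (rotations alone are not enough), and the passage from ``radial about some pole'' to ``constant'' is the competing-symmetries step (radial decreasing about \emph{every} pole forces constancy). Both are standard but deserve explicit mention if you intend this as more than a pointer to the literature.
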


To study  system \eqref{S-4} by variational methods,  we define the energy functional $I(u,v): H \to \R$ by
\begin{align*}
  I(u,v)&=\frac{1}{2}\int_{\R^N}\Big(|\nabla u|^{2}+|\nabla v|^{2}+ \big(V_{1}(x)+\lambda_1 )u^2+(V_2(x)+\lambda_2 \big)v^2 \Big)dx \\
  & \quad -\frac{1}{4}\int_{\R^N}\int_{\R^N}\frac{\mu_1|u^+(x)|^2|u^{+}(y)|^2+2\beta |u^+(x)|^2|v^+(y)|^2+\mu_2 |v^+(x)|^2|v^{+}(y)|^2 }{|x-y|^4}dxdy.
\end{align*}
Since $V_1(x), V_2(x)\in L^{\frac{N}{2}}(\R^N)$, then in view of H\"{o}lder inequality and Hardy-Littlewood-Sobolev inequality,  $I$ is well defined in $H$ and belongs to $C^{1}(H,\R)$. In what follows, we define the Nehari manifold
\begin{equation*}
 \mathcal{ N}:=\{(u,v) \in {H} \mid (u,v)\neq (0,0), \langle I'(u,v),(u,v)\rangle=0\},
\end{equation*}
where
\begin{align*}
  \langle I'(u,v),(u,v) \rangle &=\int_{\R^N}\Big(|\nabla u|^{2}+|\nabla v|^{2}+ \big(V_{1}(x)+\lambda_1)u^2+(V_2(x)+\lambda_2\big)v^2 \Big)dx \\
  & \quad -\int_{\R^N}\int_{\R^N}\frac{\mu_1|u^+(x)|^2|u^{+}(y)|^2+2\beta |u^+(x)|^2|v^+(y)|^2+\mu_2 |v^+(x)|^2|v^{+}(y)|^2 }{|x-y|^4}dxdy.
\end{align*}
Moreover, we set
\begin{equation*}
  c=\inf_{(u,v)\in \mathcal{N}}I(u,v).
\end{equation*}

We remark that the existence of non-trivial solutions to  system \eqref{S-4} is closely related to the corresponding  problem
\begin{equation}\label{S2}
  \begin{cases}
    -\Delta u=\mu_1(|x|^{-4}*|u^+|^2)u^++\beta (|x|^{-4}*|v^+|^2)u^+, \ \ x\in \R^N, \\
    -\Delta v=\mu_2(|x|^{-4}*|v^+|^2)v^++\beta (|x|^{-4}*|u^+|^2)v^+, \ \ x\in \R^N.
  \end{cases}
\end{equation}
Throughout the paper we denote by $I_{\infty}: H_0\to \R$ the energy functional whose critical points are solutions of system \eqref{S2}, that is
\begin{align*}
  I_\infty(u,v)&=\frac{1}{2}\int_{\R^N}\big(|\nabla u|^{2}+|\nabla v|^{2}\big)dx\\
  &\quad -\frac{1}{4}\int_{\R^N}\int_{\R^N}\frac{\mu_1|u^+(x)|^2|u^{+}(y)|^2+2\beta |u^+(x)|^2|v^+(y)|^2+\mu_2 |v^+(x)|^2|v^{+}(y)|^2 }{|x-y|^4}dxdy.
\end{align*}
Also the analogues of  $\mathcal{N}$  and $c$ can be denoted by $\mathcal{N}_{\infty}$ and $c_{\infty}$  respectively, more precisely,
\begin{equation*}
 \mathcal{N}_{\infty}:=\{(u,v) \in H_0 \mid (u,v)\neq (0,0), \,\, \langle I'_{\infty}(u,v),(u,v)\rangle=0\}
\end{equation*}
and
\begin{equation*}
  c_{\infty}=\inf_{(u,v)\in \mathcal{N}_{\infty}}I_{\infty}(u,v).
\end{equation*}

To avoid semi-trivial solutions in searching for non-trivial solutions of system \eqref{S-4},  we first  investigate the single elliptic equation
\begin{equation*}
   -\Delta u+(V_j(x)+\lambda_j )u=\mu_j(|x|^{-4}*|u^+|^2)u^+, \ \ j=1,2,
\end{equation*}
of which the associated energy functional  $J_{j}:H^{1}(\R^N) \to \R$ (or $D^{1,2}(\R^N) \to \R$, if $\lambda_{j}=0$ ) by
\begin{equation*}
  J_{j}(u)=\frac{1}{2}\int_{\R^N}|\nabla u|^{2}dx+\frac{1}{2}\int_{\R^N}(V_{j}(x)+\lambda_j)u^2dx-\frac{\mu_j}{4}\int_{\R^N}\int_{\R^N}\frac{|u^+(x)|^2|u^{+}(y)|^2}{|x-y|^4}dxdy.
\end{equation*}
Consider the infimum
\begin{equation*}
  m_{j}=\inf_{u\in \mathcal{M}_{j}}J_{j}(u),
\end{equation*}
defined on the Nehari manifold
$$
\mathcal{M}_{j}=
\big\{ u\in H^{1}(\R^N)\mid u\neq 0, \langle J^{\prime}_{j}(u), u\rangle=0\big\}, \quad \text{if}\ \lambda_j>0,
$$
or
$$
\mathcal{M}_{j}=
\big\{ u\in D^{1,2}(\R^N)\mid u\neq 0, \langle J^{\prime}_{j}(u), u\rangle=0\big\}, \quad \text{if}\ \lambda_j=0.
$$
If $V_j=\lambda_j=0$, we then denote the analogues of $J_j$, $m_j$, $\mathcal{M}_j$ by $J_{j}^{\infty}$, $m_{j}^{\infty}$, $\mathcal{M}_{j}^{\infty}$ respectively.

It is well-known that infimum $m_{j}^{\infty}$ is attained by
$$W_{\delta,z,j}(x)=\mu_{j}^{-\frac{1}{2}}U_{\delta,z}(x),$$
 where
\begin{equation}\label{e-2.2}
  U_{\delta,z}(x)=C_N\Big(\frac{\delta}{\delta^2+|x-z|^{2}}\Big)^{\frac{N-2}{2}}, \ \ \delta>0, \ \ z\in \R^N
\end{equation}
is the unique positive solution of
\begin{equation}\label{e-2.3}
  -\Delta u=(|x|^{-4}*u^{2})u,
\end{equation}
and constant $C_N$ only depends on dimension $N$, see \cite{DY,GHPS}. Moreover, by calculation we have
\begin{equation*}
  \int_{\R^N}|\nabla U_{\delta,z}(x)|^2dx=\int_{\R^N}\int_{\R^N}\frac{|U_{\delta,z}(x)|^{2}|U_{\delta,z}(y)|^2}{|x-y|^4}dxdy=\mathcal{S}^2_{HL}=C(N,4)^{-1}\mathcal{S}^2.
\end{equation*}
and
$$m_{j}^{\infty}=\frac{\mathcal{S}_{HL}^2}{4\mu_j}.$$

\begin{lem}\label{lm2.1}
Let $N\geq 5$ and nonnegative function $V_{j}(x)\in L^{\frac{N}{2}}(\R^N)$, then
$$
m_j=m_{j}^{\infty}=\frac{\mathcal{S}_{HL}^{2}}{4\mu_j}.
$$
\end{lem}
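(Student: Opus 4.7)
The plan is a two-sided inequality argument, establishing $m_{j}\geq m_{j}^{\infty}$ and $m_{j}\leq m_{j}^{\infty}$ separately, since the identification $m_{j}^{\infty}=\mathcal{S}_{HL}^{2}/(4\mu_{j})$ is already recorded just above the statement. The common technical device will be the fibering/Nehari projection: for any $u$ with $u^{+}\not\equiv 0$, the map $t\mapsto J_{j}(tu)=\tfrac{t^{2}}{2}A(u)-\tfrac{t^{4}}{4}\mu_{j}D(u)$ attains a unique maximum at $t_{u}^{2}=A(u)/(\mu_{j}D(u))$, so that
\begin{equation*}
\max_{t>0}J_{j}(tu)=\frac{\bigl(\int_{\R^N}|\nabla u|^{2}\,dx+\int_{\R^N}(V_{j}+\lambda_{j})u^{2}\,dx\bigr)^{2}}{4\mu_{j}D(u)},\qquad D(u):=\int_{\R^N}\int_{\R^N}\frac{|u^{+}(x)|^{2}|u^{+}(y)|^{2}}{|x-y|^{4}}\,dxdy,
\end{equation*}
and the analogous formula for $J_{j}^{\infty}$ is obtained by dropping the $V_{j}+\lambda_{j}$ term. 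Consequently $m_{j}=\inf_{u:\,u^{+}\not\equiv 0}\max_{t>0}J_{j}(tu)$ and likewise for $m_{j}^{\infty}$.

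The lower bound $m_{j}\geq m_{j}^{\infty}$ will then be immediate: since $V_{j}\geq 0$ and $\lambda_{j}\geq 0$, the numerator in the formula for $\max_{t>0}J_{j}(tu)$ dominates $\bigl(\int_{\R^N}|\nabla u|^{2}\bigr)^{2}$ pointwise in $u$, while the denominator is unchanged, so taking infima gives the claim.

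The upper bound $m_{j}\leq m_{j}^{\infty}$ is the substantive step and will be proved by concentration. I plan to test the family $W_{\delta}(x):=\mu_{j}^{-1/2}U_{\delta,0}(x)$ and send $\delta\to 0^{+}$. The Sobolev and HLS-energy of $W_{\delta}$ are scale-invariant and equal to $\mu_{j}^{-1}\mathcal{S}_{HL}^{2}$ and $\mu_{j}^{-2}\mathcal{S}_{HL}^{2}$ respectively; the zero-order term satisfies $\lambda_{j}\int_{\R^N}W_{\delta}^{2}\,dx=\lambda_{j}\mu_{j}^{-1}\delta^{2}\int_{\R^N}U_{1,0}^{2}\,dx\to 0$ since $U_{1,0}\in L^{2}(\R^N)$ whenever $N\geq 5$. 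What remains, and what I expect to be the main obstacle, is to prove $\int_{\R^N}V_{j}W_{\delta}^{2}\,dx\to 0$ under the sole hypothesis $V_{j}\in L^{N/2}$, because direct H\"older only yields a uniform bound (the $L^{2^{*}}$-norm of $W_{\delta}$ is scale-invariant). My plan is the splitting $\int_{\R^N}V_{j}W_{\delta}^{2}=\int_{B_{\varepsilon}(0)}+\int_{B_{\varepsilon}(0)^{c}}$: the inner piece is controlled by $\|V_{j}\|_{L^{N/2}(B_{\varepsilon})}\|W_{\delta}\|_{L^{2^{*}}}^{2}=\mu_{j}^{-1}\|V_{j}\|_{L^{N/2}(B_{\varepsilon})}\|U_{1,0}\|_{L^{2^{*}}}^{2}$, which is small in $\varepsilon$ uniformly in $\delta$ by the absolute continuity of the $L^{N/2}$ integral, while the outer piece is handled via the pointwise bound $U_{\delta,0}(x)\leq C\,\delta^{(N-2)/2}|x|^{-(N-2)}$ for $|x|\geq\varepsilon$, which together with $V_{j}\in L^{N/2}$ yields $\int_{B_{\varepsilon}^{c}}V_{j}W_{\delta}^{2}\leq C_{\varepsilon}\delta^{N-2}\to 0$ as $\delta\to 0$. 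Inserting these estimates into the fibering formula, $\max_{t>0}J_{j}(tW_{\delta})\to\mathcal{S}_{HL}^{2}/(4\mu_{j})=m_{j}^{\infty}$, from which $m_{j}\leq m_{j}^{\infty}$ and the lemma follows.
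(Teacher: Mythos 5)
Your proposal is correct and takes essentially the same route as the paper: both sides of the inequality are obtained by noting that nonnegativity of $V_j+\lambda_j$ forces $m_j\geq m_j^\infty$, and by testing with the concentrating family $\mu_j^{-1/2}U_{\delta,0}$ with $\delta\to 0^+$, splitting $\int V_j W_\delta^2$ into a near piece (controlled by absolute continuity of the $L^{N/2}$ integral) and a far piece. The only cosmetic difference is in the far piece, where you invoke the pointwise decay $U_{\delta,0}(x)\leq C\delta^{(N-2)/2}|x|^{-(N-2)}$ while the paper instead uses $\|\Phi_n\|_{L^{2^*}(\R^N\setminus B_\rho)}\to 0$; both are valid and rest on the same scaling.
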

\begin{proof}
For arbitrary $u\in \mathcal{M}_j$, since $V_{j}(x)$ and $\lambda_{j}$  are both nonnegative, then
$$
J_{j}(u)\geq m_{j}^{\infty},
$$
which implies that $m_j\geq m_{j}^{\infty}$.

To show that the opposite inequality holds, we then consider the sequence
\begin{equation*}
\Phi_{n}(x)=\mu_{j}^{-\frac{1}{2}}U_{\delta_n,0}(x),
\end{equation*}
where $\delta_n\to 0^+$ as $n\to \infty$. On  one hand, since $N\geq 5$, then
\begin{equation*}
  \lambda_j \int_{\R^N}|\Phi_{n}|^{2}dx=\frac{\lambda_j \delta_{n}^{2}}{\mu_j}\int_{\R^N}|U_{1,0}(x)|^2dx=o_{n}(1),
\end{equation*}
in which we use the property below
\begin{align*}
  \int_{\R^N}|U_{1,0}(x)|^{2}dx&\leq C\int_{\R^N}\frac{1}{(1+|x|^{2})^{N-2}}dx \\
  &\leq  C\int_{B_{1}(0)}\frac{1}{(1+|x|^{2})^{N-2}}dx + C\int_{\R^N \setminus B_{1}(0)}\frac{1}{(1+|x|^{2})^{N-2}}dx \\
  & \leq C_{1}+ C\int_{\R^N\setminus B_{1}(0)}\frac{1}{|x|^{2N-4}}dx=C_{1}+C\int_{1}^{+\infty}\frac{1}{r^{N-3}}dr<C_{2}.
\end{align*}
On the other hand,  by H\"{o}lder inequality  we have for each $\rho>0$,
\begin{align*}
  \int_{\R^N}V_{j}(x)|\Phi_n|^{2}dx&=\int_{B_{\rho}(0)}V_{j}(x)|\Phi_n|^{2}dx+\int_{\R^N\setminus B_{\rho}(0)}V_{j}(x)|\Phi_n|^{2}dx \\
  &\leq \|\Phi_n\|_{L^{2^{*}}}^{2}\Big( \int_{B_{\rho}(0)}|V_{j}(x)|^{\frac{N}{2}}dx  \Big)^{\frac{2}{N}}+\|V_{j}\|_{L^{\frac{N}{2}}}\Big( \int_{\R^N\setminus B_{\rho}(0)}|\Phi_n(x)|^{2^{*}}dx  \Big)^{\frac{2}{2^*}}.
\end{align*}
Direct calculations show that
\begin{equation*}
  \lim_{n\to\infty}\int_{\R^N \setminus B_{\rho}(0)}|\Phi_n|^{2^{*}}dx=0, \ \ \ \  \sup_{n\in \N}\|\Phi_n\|_{L^{2^*}}<+\infty
\end{equation*}
and
\begin{equation*}
\lim_{\rho \to 0}\int_{B_{\rho}(0)}|V_{j}(x)|^{\frac{N}{2}}dx=0,
\end{equation*}
then
\begin{equation*}
  \lim_{n\to\infty}\int_{\R^N}V_{j}(x)|\Phi_n|^{2}dx=0.
\end{equation*}
Furthermore,
\begin{equation}\label{e-2.7}
 \lim_{n\to\infty}\int_{\R^N}(V_{j}(x)+\lambda_j )|\Phi_n|^{2}dx=0.
\end{equation}
For $t_n>0$ defined by
\begin{equation*}
t_{n}^{2}=\frac{\int_{\R^N}|\nabla \Phi_n|^{2}dx+\int_{\R^N}(V_{j}(x)+\lambda_i )|\Phi_{n}|^2dx}{\int_{\R^N}\int_{\R^N}\frac{\mu_{j}|\Phi_{n}(x)|^{2}|\Phi_n(y)|^2}{|x-y|^4}dxdy},
\end{equation*}
we can infer that $t_{n}\Phi_{n}\in \mathcal{M}_{j}$ and $t_n \to 1$ as $n\to \infty$. Therefore,  by \eqref{e-2.7}  we get
\begin{equation}\label{e-2.8}
 m_{j}\leq J_{j}(t_n \Phi_n)=\frac{t_{n}^{2}}{4}\Big( \int_{\R^N}|\nabla \Phi_{n}|^2dx+\int_{\R^N}(V_{j}(x)+\lambda_j )|\Phi_n|^{2}dx     \Big)
 = \frac{\mathcal{S}_{HL}^2}{4\mu_{j}}+o_{n}(1).
\end{equation}
Taking the limit $n\to \infty$ in \eqref{e-2.8}, we get
$$m_{j}\leq \frac{\mathcal{S}_{HL}^2}{4\mu_{j}}=m_{j}^{\infty}.$$
Thus, the whole proof is completed.
\end{proof}

\begin{cor}
\label{nontrivial}
Let $(u,v)\in \mathcal{N}$ be a critical point of $I$ constrained on $\mathcal{N}$.
Assume that
$$I(u,v)<\min\left\{\frac{\mathcal{S}_{HL}^2}{4\mu_{1}},\frac{\mathcal{S}_{HL}^2}{4\mu_{2}}\right\},$$
 then $u\not\equiv0$ and $v\not\equiv 0$.
\end{cor}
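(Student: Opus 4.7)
The plan is to argue by contradiction, exploiting the symmetry between the two components. Suppose $v \equiv 0$; the case $u \equiv 0$ is entirely analogous and yields the same contradiction with $\mu_2$ in place of $\mu_1$. Since $(u,v) \in \mathcal{N}$ is nontrivial, I have $u \not\equiv 0$, and the Nehari identity $\langle I'(u,0),(u,0)\rangle = 0$ reads
\[
\int_{\R^N}\bigl(|\nabla u|^2 + (V_1+\lambda_1)u^2\bigr)\,dx = \mu_1 \int_{\R^N}\!\!\int_{\R^N} \frac{(u^+)^2(x)(u^+)^2(y)}{|x-y|^4}\,dx\,dy.
\]
From this I would deduce $u^+ \not\equiv 0$, for otherwise the right-hand side vanishes and the sign conditions on $V_1,\lambda_1$ force $u \equiv 0$, contradicting nontriviality.

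Next I would compare $I(u,0)$ with the scalar level $m_1$ via the standard Nehari fibration argument on $J_1$. Since the Hartree nonlinearity is $4$-homogeneous with a strictly positive coefficient and the quadratic part is strictly positive, the fiber map $t \mapsto J_1(tu^+)$ has a unique maximum at some $t^\star > 0$, which is the unique point where $t^\star u^+ \in \mathcal{M}_1$. Using the pointwise inequalities $|\nabla u^+| \leq |\nabla u|$ and $(u^+)^2 \leq u^2$ a.e. together with the Nehari identity above, I would obtain $\langle J_1'(u^+), u^+\rangle \leq 0$, which in turn forces $t^\star \leq 1$.

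Finally, invoking the identity $J_1(w) = \tfrac{1}{4}\int_{\R^N}(|\nabla w|^2 + (V_1+\lambda_1)w^2)\,dx$ valid on $\mathcal{M}_1$ (a consequence of the quartic homogeneity of the nonlinearity) together with $(u,0)\in\mathcal{N}$, I would estimate
\[
m_1 \leq J_1(t^\star u^+) = \tfrac{(t^\star)^2}{4}\!\int_{\R^N}\!\bigl(|\nabla u^+|^2 + (V_1+\lambda_1)(u^+)^2\bigr)dx \leq \tfrac{1}{4}\!\int_{\R^N}\!\bigl(|\nabla u|^2 + (V_1+\lambda_1)u^2\bigr)dx = I(u,0).
\]
Applying Lemma \ref{lm2.1} then gives $I(u,0) \geq \mathcal{S}_{HL}^2/(4\mu_1)$, which contradicts the hypothesis and completes the argument. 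No substantial obstacle arises; the only points requiring care are the non-triviality of $u^+$ and the inequality $t^\star \leq 1$ from the fibration, both resolved above. Notably, the critical-point hypothesis itself is not directly exploited — only membership in $\mathcal{N}$ and the energy bound are needed.
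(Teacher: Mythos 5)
Your argument is correct, and in fact slightly more general than what the paper's statement requires. The paper's implicit proof (the corollary follows Lemma~\ref{lm2.1} with no proof given) is the direct one: since $\mathcal N$ is a natural constraint, a constrained critical point $(u,v)$ is a weak solution of the modified system~\eqref{S-4}; if $v\equiv 0$, the first equation says $u$ is a nonzero critical point of $J_1$, i.e.\ $u\in\mathcal M_1$, whence $I(u,0)=J_1(u)\geq m_1=\mathcal S_{HL}^2/(4\mu_1)$, contradicting the energy bound. Your route discards the critical-point hypothesis and works with the bare Nehari constraint $\langle I'(u,0),(u,0)\rangle=0$: you show $u^+\not\equiv 0$, use $|\nabla u^+|\leq|\nabla u|$ and $(u^+)^2\leq u^2$ to get $\langle J_1'(u^+),u^+\rangle\leq 0$, project $u^+$ onto $\mathcal M_1$ with parameter $t^\star\leq 1$, and use the $\tfrac14$-identity on both $\mathcal M_1$ and $\mathcal N$ to reach $m_1\leq J_1(t^\star u^+)\leq I(u,0)$. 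Each step is sound; in particular the fibration on $J_1$ behaves as you claim because the nonlocal term is $4$-homogeneous and strictly positive once $u^+\not\equiv 0$. The paper's route is shorter but relies on the Euler--Lagrange equation; yours costs one fibration argument and pays back with a statement valid for arbitrary semi-trivial points of $\mathcal N$ below the threshold energy, which is a clean and potentially useful strengthening.
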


\begin{lem}\label{lm2.2}
  For arbitrary  $(u,v)\in H\setminus \{(0,0)\}$, let $\big( \tau_{(u,v)}u, \tau_{(u,v)}v \big)$, $\big(t_{(u,v)}u, t_{(u,v)} v \big)$ be the projections of $(u,v)$ on $\mathcal{N}_{\infty}$ and $\mathcal{N}$ respectively. Then
  \begin{equation*}
    \tau_{(u,v)}\leq t_{(u,v)}.
  \end{equation*}
\end{lem}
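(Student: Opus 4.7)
The plan is to write down explicitly the scalar equations that determine $t_{(u,v)}$ and $\tau_{(u,v)}$ and then compare them. Since the projections are defined as unique positive multiples of $(u,v)$ that land on the respective Nehari manifolds, and the two manifolds differ only through the presence of the nonnegative lower-order terms $V_j + \lambda_j$ in $I$ but not in $I_\infty$, the inequality should come from a one-line monotonicity argument.

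First I would note that $H\subset H_0$ (since $H^1(\R^N)\subset D^{1,2}(\R^N)$), so it is legitimate to project any $(u,v)\in H\setminus\{(0,0)\}$ onto both $\mathcal{N}$ and $\mathcal{N}_\infty$. Setting
\begin{equation*}
B(u,v):=\int_{\R^N}\!\!\int_{\R^N}\frac{\mu_1|u^+(x)|^2|u^+(y)|^2+2\beta|u^+(x)|^2|v^+(y)|^2+\mu_2|v^+(x)|^2|v^+(y)|^2}{|x-y|^4}\,dxdy,
\end{equation*}
the condition $(tu,tv)\in\mathcal{N}$ gives $t^2 B(u,v)=\int_{\R^N}(|\nabla u|^2+|\nabla v|^2+(V_1+\lambda_1)u^2+(V_2+\lambda_2)v^2)\,dx$, while $(\tau u,\tau v)\in\mathcal{N}_\infty$ gives $\tau^2 B(u,v)=\int_{\R^N}(|\nabla u|^2+|\nabla v|^2)\,dx$. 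In particular the projections exist precisely when $B(u,v)>0$.

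Then I would simply observe that, since $V_1,V_2\geq 0$ by $(A_1)$ and $\lambda_1,\lambda_2\geq 0$ by hypothesis,
\begin{equation*}
\int_{\R^N}(|\nabla u|^2+|\nabla v|^2)\,dx \;\leq\; \int_{\R^N}\bigl(|\nabla u|^2+|\nabla v|^2+(V_1+\lambda_1)u^2+(V_2+\lambda_2)v^2\bigr)\,dx.
\end{equation*}
Dividing both sides by $B(u,v)>0$ gives $\tau_{(u,v)}^2 \leq t_{(u,v)}^2$, and taking positive square roots yields $\tau_{(u,v)}\leq t_{(u,v)}$.

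There is essentially no obstacle: the argument is a direct comparison of the scaling equations defining the two projections, with the inequality driven purely by the sign condition $V_j+\lambda_j\geq 0$. The only point worth mentioning is that strict inequality holds as soon as $(V_1+\lambda_1)u^2+(V_2+\lambda_2)v^2\not\equiv 0$, which will be useful later when one needs $\tau_{(u,v)}<t_{(u,v)}$ to separate the two energy levels $c$ and $c_\infty$.
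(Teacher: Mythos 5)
Your proof is correct and takes essentially the same approach as the paper: both derive the explicit scaling relations $\tau_{(u,v)}^2 B(u,v)=\int(|\nabla u|^2+|\nabla v|^2)$ and $t_{(u,v)}^2 B(u,v)=\int(|\nabla u|^2+|\nabla v|^2+(V_1+\lambda_1)u^2+(V_2+\lambda_2)v^2)$ from the Nehari conditions, then compare numerators using $V_j,\lambda_j\geq 0$. Your added remarks on $H\subset H_0$, the condition $B(u,v)>0$ for existence of the projections, and strict inequality are all correct and harmless, if slightly beyond what the paper records.
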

\begin{proof}
Since $V_{j}(x)$ and $\lambda_{j}$ are both nonnegative, then by
direct calculations we get
\begin{align*}
  \tau_{(u,v)}^{2}&=\frac{\int_{\R^N}|\nabla u|^{2}dx+\int_{\R^N}|\nabla v|^{2}dx}{\int_{\R^N}\int_{\R^N}\frac{\mu_1|u^+(x)|^2|u^{+}(y)|^2+2\beta |u^+(x)|^2|v^+(y)|^2+\mu_2 |v^+(x)|^2|v^{+}(y)|^2 }{|x-y|^4}dxdy} \\
  &\leq \frac{\int_{\R^N}|\nabla u|^{2}dx+\int_{\R^N}|\nabla v|^{2}dx+\int_{\R^N}(V_{1}(x)+\lambda_1)u^2dx+\int_{\R^N}(V_2(x)+\lambda_2 )v^2dx}{\int_{\R^N}\int_{\R^N}\frac{\mu_1|u^+(x)|^2|u^{+}(y)|^2+2\beta |u^+(x)|^2|v^+(y)|^2+\mu_2 |v^+(x)|^2|v^{+}(y)|^2 }{|x-y|^4}dxdy} \\
  & = t_{(u,v)}^{2},
\end{align*}
which implies that  $\tau_{(u,v)}\leq t_{(u,v)}$.

\end{proof}

In what follows, we always require $\beta>\max\{\mu_1,\mu_2\}$. Let
$$
k_{1}=\frac{\beta-\mu_2}{\beta^{2}-\mu_1\mu_2}, \ \ \ \ k_2=\frac{\beta-\mu_1}{\beta^{2}-\mu_1\mu_2},
$$
The next two lemmas are essentially proved  in \cite{GLMY} and we omit them.
\begin{lem}(\cite[Lemma 2.3]{GLMY})
If $\beta>\max\{\mu_1,\mu_2 \}$, then
$$c_{\infty}=\frac{1}{4}(k_1+k_2)\mathcal{S}_{HL}^{2}$$
and any ground state solutions of system \eqref{S2} must be of the form
\begin{equation*}
  (u,v)=(\sqrt{k_1}U_{\delta,z}, \sqrt{k_2}U_{\delta,z})
\end{equation*}
for some $\delta \in \R$ and $z\in \R^N$.
\end{lem}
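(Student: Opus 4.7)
The plan is to reduce the problem to optimizing a single scalar ratio. For any $(u,v)\in\mathcal{N}_{\infty}$, set
\begin{equation*}
A=\int_{\R^N}|\nabla u|^{2}dx,\qquad B=\int_{\R^N}|\nabla v|^{2}dx,
\end{equation*}
\begin{equation*}
P=\int_{\R^N}\int_{\R^N}\frac{|u^{+}(x)|^{2}|u^{+}(y)|^{2}}{|x-y|^{4}}dxdy,\quad Q=\int_{\R^N}\int_{\R^N}\frac{|v^{+}(x)|^{2}|v^{+}(y)|^{2}}{|x-y|^{4}}dxdy,
\end{equation*}
\begin{equation*}
R=\int_{\R^N}\int_{\R^N}\frac{|u^{+}(x)|^{2}|v^{+}(y)|^{2}}{|x-y|^{4}}dxdy.
\end{equation*}
On $\mathcal{N}_{\infty}$ one has $I_{\infty}(u,v)=\frac{1}{4}(A+B)$, so $c_{\infty}=\frac{1}{4}\inf_{\mathcal{N}_{\infty}}(A+B)$. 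Two sharp estimates drive the argument: the very definition of $\mathcal{S}_{HL}$ yields $P\leq A^{2}/\mathcal{S}_{HL}^{2}$ and $Q\leq B^{2}/\mathcal{S}_{HL}^{2}$; and since the kernel $|x-y|^{-4}$ defines a positive-definite bilinear form, Cauchy--Schwarz gives $R\leq\sqrt{PQ}\leq AB/\mathcal{S}_{HL}^{2}$. Substituting these bounds into the Nehari identity $A+B=\mu_{1}P+2\beta R+\mu_{2}Q$ yields the master inequality
\begin{equation*}
\mathcal{S}_{HL}^{2}(A+B)\leq\mu_{1}A^{2}+2\beta AB+\mu_{2}B^{2}.
\end{equation*}

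I would then parametrize $\alpha=A/(A+B)\in[0,1]$ and read this as
\begin{equation*}
A+B\geq\frac{\mathcal{S}_{HL}^{2}}{f(\alpha)},\qquad f(\alpha):=\mu_{1}\alpha^{2}+2\beta\alpha(1-\alpha)+\mu_{2}(1-\alpha)^{2}.
\end{equation*}
The hypothesis $\beta>\max\{\mu_{1},\mu_{2}\}$ makes $f$ a strictly concave quadratic attaining its maximum on $[0,1]$ at the interior point $\alpha^{*}=(\beta-\mu_{2})/(2\beta-\mu_{1}-\mu_{2})$. A short algebraic check (factoring the cubic $2\beta^{3}-(\mu_{1}+\mu_{2})\beta^{2}-2\mu_{1}\mu_{2}\beta+\mu_{1}\mu_{2}(\mu_{1}+\mu_{2})$ as $(2\beta-\mu_{1}-\mu_{2})(\beta^{2}-\mu_{1}\mu_{2})$) shows $f(\alpha^{*})=(\beta^{2}-\mu_{1}\mu_{2})/(2\beta-\mu_{1}-\mu_{2})=1/(k_{1}+k_{2})$, so $c_{\infty}\geq\frac{1}{4}(k_{1}+k_{2})\mathcal{S}_{HL}^{2}$. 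For the matching upper bound, I would test on $(u,v)=(\sqrt{k_{1}}\,U_{\delta,z},\sqrt{k_{2}}\,U_{\delta,z})$: using the identities $\int_{\R^N}|\nabla U_{\delta,z}|^{2}dx=\int_{\R^N}\int_{\R^N}|U_{\delta,z}|^{2}|U_{\delta,z}|^{2}|x-y|^{-4}dxdy=\mathcal{S}_{HL}^{2}$, together with the linear relations $\mu_{1}k_{1}+\beta k_{2}=\beta k_{1}+\mu_{2}k_{2}=1$ (which uniquely characterize $(k_{1},k_{2})$), one checks directly that this pair lies in $\mathcal{N}_{\infty}$ and has energy exactly $\frac{1}{4}(k_{1}+k_{2})\mathcal{S}_{HL}^{2}$.

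For the characterization of ground states, suppose $(u,v)$ attains $c_{\infty}$; then every inequality above is an equality, and I argue in four steps. (i) $u^{-}\equiv v^{-}\equiv 0$: any nontrivial negative part would strictly increase $A+B$ without affecting $P,Q,R$, so rescaling $(u^{+},v^{+})$ onto $\mathcal{N}_{\infty}$ would strictly lower the energy. (ii) $u\not\equiv 0$ and $v\not\equiv 0$: a semi-trivial pair $(u,0)\in\mathcal{N}_{\infty}$ would have energy at least $m_{1}^{\infty}=\mathcal{S}_{HL}^{2}/(4\mu_{1})$ by Lemma~\ref{lm2.1}, and the strict inequality $\mathcal{S}_{HL}^{2}/(4\mu_{1})>c_{\infty}$ reduces to $(\beta-\mu_{1})^{2}>0$, which holds. (iii) Equality in $P\leq A^{2}/\mathcal{S}_{HL}^{2}$ (respectively $Q\leq B^{2}/\mathcal{S}_{HL}^{2}$) is the classical equality case in the sharp Hardy--Littlewood--Sobolev--Sobolev inequality, forcing $u=aU_{\delta_{1},z_{1}}$ and $v=bU_{\delta_{2},z_{2}}$ for suitable $a,b>0,\;\delta_{i}>0,\;z_{i}\in\R^{N}$, by the extremizer classification (see \cite{DY,GHPS}). (iv) Equality in the Cauchy--Schwarz bound $R\leq\sqrt{PQ}$ forces $(u^{+})^{2}$ and $(v^{+})^{2}$ to be proportional, hence $\delta_{1}=\delta_{2}$ and $z_{1}=z_{2}$; combining the forced ratio $A/(A+B)=\alpha^{*}=k_{1}/(k_{1}+k_{2})$ with the Nehari identity pins down $a=\sqrt{k_{1}}$ and $b=\sqrt{k_{2}}$. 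The main obstacle I anticipate is step (iv): establishing that equality in the Riesz-potential Cauchy--Schwarz inequality is indeed rigid, which requires knowing that the bilinear form $(f,g)\mapsto\int_{\R^N}\int_{\R^N}f(x)g(y)|x-y|^{-4}dxdy$ is a strict inner product on the relevant cone of nonnegative functions (a consequence of the positivity/injectivity of the Riesz potential on the appropriate $L^{p}$ space), after which the conclusion follows.
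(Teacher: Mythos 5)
The paper does not prove this lemma; it is cited from \cite[Lemma 2.3]{GLMY} with the proof omitted, so there is no in-paper argument to compare against. Your proof is correct and self-contained. The bookkeeping all checks out: $f(\alpha)$ is strictly concave because $2\beta>\mu_1+\mu_2$, the interior maximizer is $\alpha^*=(\beta-\mu_2)/(2\beta-\mu_1-\mu_2)\in(0,1)$ with $f(\alpha^*)=(\beta^2-\mu_1\mu_2)/(2\beta-\mu_1-\mu_2)=1/(k_1+k_2)$, the test pair $(\sqrt{k_1}U_{\delta,z},\sqrt{k_2}U_{\delta,z})$ lies on $\mathcal{N}_\infty$ by the identity $\mu_1k_1^2+2\beta k_1k_2+\mu_2k_2^2=k_1+k_2$, and the semi-trivial levels strictly exceed $c_\infty$ because $(\beta-\mu_j)^2>0$ (you verify it for $(u,0)$; the companion inequality for $(0,v)$ is $(\beta-\mu_2)^2>0$). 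The rigidity you flag in step (iv) does hold: since the Fourier transform of $|\cdot|^{-4}$ is a positive multiple of $|\xi|^{4-N}$, Plancherel shows that the bilinear form $(f,g)\mapsto\int_{\R^N}\int_{\R^N} f(x)g(y)|x-y|^{-4}\,dxdy$ is strictly positive definite on $L^{2N/(2N-4)}(\R^N)$, so equality in Cauchy--Schwarz forces $(u^+)^2$ and $(v^+)^2$ to be proportional; together with the extremizer classification from step (iii) this gives $\delta_1=\delta_2$, $z_1=z_2$, and the forced ratio $A/(A+B)=\alpha^*=k_1/(k_1+k_2)$ plus the Nehari constraint then pin the amplitudes to $\sqrt{k_1},\sqrt{k_2}$.
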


\begin{lem}\label{lm2.4}(\cite[Corollary 3.5]{GLMY})
Let $\beta>\max\{\mu_1,\mu_2\}$. If $(u,v)\in H_0$ is a non-trivial classical positive solution of system \eqref{S2}, then we have
\begin{equation*}
  (u,v)=(\sqrt{k_1}U_{\delta,z}, \sqrt{k_2}U_{\delta,z})
\end{equation*}
for some $\delta>0$ and $z\in \R^N$. Moreover, each non-trivial classical positive solution  $(u,v)\in H_0$ of system \eqref{S2} is a ground state solution.
\end{lem}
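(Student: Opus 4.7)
First, I would upgrade the regularity of any positive solution $(u,v)\in D^{1,2}\times D^{1,2}$ of the limit system \eqref{S2}. Since $u,v\in D^{1,2}(\R^N)\subset L^{2^*}(\R^N)$, the Hardy-Littlewood-Sobolev inequality (Lemma 2.1) places both $|x|^{-4}*u^2$ and $|x|^{-4}*v^2$ in $L^{N/2}(\R^N)$; a standard $L^p$-bootstrap combined with interior Schauder estimates then promotes $u,v$ to $C^2(\R^N)$ and produces the decay $u(x)+v(x)=O(|x|^{-(N-2)})$ as $|x|\to\infty$. Convolving against the fundamental solution $G(x)=c_N|x|^{-(N-2)}$ of $-\Delta$ recasts \eqref{S2} as a coupled nonlocal integral system of Hartree type, suitable for a reflection-based classification.

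Next I would apply the method of moving spheres (equivalently, moving planes with Kelvin transform) to the pair $(u,v)$ \emph{simultaneously}, in the spirit of Chen-Li-Ou's treatment of nonlocal integral systems. The hypothesis $\beta>0$ renders the system cooperative, so the reflection comparison closes for both components in parallel: after a common translation of the origin, this yields
$$u(x)=a\,U_{\delta,0}(x),\qquad v(x)=b\,U_{\delta,0}(x)$$
for positive constants $a,b$ and a \emph{common} scale parameter $\delta>0$.

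Substituting this ansatz into either equation of \eqref{S2} and using the identity $-\Delta U_{\delta,0}=(|x|^{-4}*U_{\delta,0}^{\,2})U_{\delta,0}$ reduces the problem to the two linear equations $\mu_1 a^2+\beta b^2=1$ and $\beta a^2+\mu_2 b^2=1$ in the unknowns $a^2,b^2$. Since $\beta>\max\{\mu_1,\mu_2\}$ forces $\beta^2\neq\mu_1\mu_2$, the unique solution is $(a^2,b^2)=(k_1,k_2)$, which yields $(u,v)=(\sqrt{k_1}\,U_{\delta,z},\sqrt{k_2}\,U_{\delta,z})$. The ``moreover'' assertion that any such positive solution is automatically a ground state is immediate from Lemma 2.3, since this profile exactly realizes the ground-state energy $c_\infty=\tfrac14(k_1+k_2)\mathcal{S}_{HL}^{2}$.

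The main obstacle will be the simultaneous moving-sphere classification. One cannot apply the standard single-equation argument to $u$ and $v$ in isolation because of the cross-coupling term $\beta(|x|^{-4}*v^2)u$, so the reflection inequalities for the two components must be advanced in tandem, using the positivity of $\beta$ together with the monotonicity of the Riesz-kernel convolution to preserve the correct orderings at each stage of the sliding; the critical scale-invariance of the nonlinearity is what eventually forces the two bubbles to share a common center and common width $\delta$.
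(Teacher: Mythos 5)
Your proposal matches the approach the paper itself attributes to this result: the paper omits a proof of Lemma \ref{lm2.4}, citing \cite[Corollary 3.5]{GLMY} and remarking explicitly that the uniqueness there is established ``by using of the method of moving spheres in integral form,'' which is precisely the regularity-bootstrap followed by simultaneous Kelvin-transform/reflection that you sketch, with the cooperativity from $\beta>0$ allowing the two components to be compared in tandem and forcing a common center and scale. The remaining algebra is correct: substituting $u=aU_{\delta,z}$, $v=bU_{\delta,z}$ into \eqref{S2} and using $-\Delta U_{\delta,z}=(|x|^{-4}*U_{\delta,z}^2)U_{\delta,z}$ yields $\mu_1a^2+\beta b^2=1$, $\beta a^2+\mu_2 b^2=1$, whose unique solution is $(a^2,b^2)=(k_1,k_2)$ since $\beta^2>\mu_1\mu_2$, and the ground-state identification then follows from Lemma 2.3 because $I_\infty(\sqrt{k_1}U_{\delta,z},\sqrt{k_2}U_{\delta,z})=\tfrac14(k_1+k_2)\mathcal{S}_{HL}^2=c_\infty$.
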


 In \cite{GLMY}, Gao et al. proved the uniqueness result above by using of the method of moving spheres in integral form.  We would like to point out that the uniqueness result  plays an important role in proving the compactness of Palais-Smale sequence in a suitable energy interval.


\begin{lem}\label{lm2.6}
Suppose that $N\geq 5$, $\beta>\max\{\mu_1,\mu_2 \}$, $\lambda_1, \lambda_2\geq 0$ and $\lambda:=\max\{\lambda_1,\lambda_2\}>0$. If $V_{1}(x), V_{2}(x)$ satisfy assumptions $(A_1)$-$(A_2)$,
 then
$c_{\infty}=c$
and $c$ is not attained.
\end{lem}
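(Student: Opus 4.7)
The plan is to sandwich $c$ and $c_\infty$ by two matching inequalities, and then exclude any minimizer by tracing the chain of equalities back to an impossible semi-trivial configuration. For the upper bound $c_\infty \le c$, I fix $(u,v)\in\mathcal{N}$ and let $\tau_{(u,v)}$ be the projection scalar onto $\mathcal{N}_\infty$; because $t_{(u,v)}=1$, Lemma \ref{lm2.2} supplies $\tau_{(u,v)}\le 1$. On $\mathcal{N}_\infty$ the functional reduces to $I_\infty=\frac{1}{4}\int(|\nabla u|^2+|\nabla v|^2)dx$, while on $\mathcal{N}$ it reads $I=\frac{1}{4}\int(|\nabla u|^2+|\nabla v|^2+(V_1+\lambda_1)u^2+(V_2+\lambda_2)v^2)dx$. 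Combining $\tau_{(u,v)}^2\le 1$ with $V_j+\lambda_j\ge 0$, one obtains
\begin{equation*}
c_\infty\le I_\infty(\tau_{(u,v)}u,\tau_{(u,v)}v)=\frac{\tau_{(u,v)}^2}{4}\int_{\R^N}(|\nabla u|^2+|\nabla v|^2)dx\le I(u,v),
\end{equation*}
and taking the infimum over $\mathcal{N}$ produces $c_\infty\le c$.

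For the reverse inequality $c\le c_\infty$, I will use concentrating test functions, imitating the second half of the proof of Lemma \ref{lm2.1}. The earlier lemma computing $c_\infty$ asserts that $c_\infty=\frac{1}{4}(k_1+k_2)\mathcal{S}_{HL}^2$ is attained by $(\sqrt{k_1}U_{\delta,z},\sqrt{k_2}U_{\delta,z})$, so I set $\Psi_n:=(\sqrt{k_1}U_{\delta_n,0},\sqrt{k_2}U_{\delta_n,0})$ with $\delta_n\to 0^+$. Since $N\ge 5$ yields $\int|U_{1,0}|^2dx<\infty$, and since $V_j\in L^{N/2}(\R^N)$ allows the standard split of $\int V_j|U_{\delta_n,0}|^2dx$ into a small ball and its complement, exactly as in the proof of Lemma \ref{lm2.1} I obtain $\int_{\R^N}(V_j+\lambda_j)|U_{\delta_n,0}|^2dx=o_n(1)$. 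Choosing $t_n>0$ so that $t_n\Psi_n\in\mathcal{N}$, the $o_n(1)$ perturbation of the numerator combined with $\Psi_n\in\mathcal{N}_\infty$ forces $t_n\to 1$, and
\begin{equation*}
c\le I(t_n\Psi_n)=\frac{t_n^2}{4}\bigl((k_1+k_2)\mathcal{S}_{HL}^2+o_n(1)\bigr)\longrightarrow c_\infty.
\end{equation*}

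To show $c$ is not attained, I argue by contradiction: suppose $(u,v)\in\mathcal{N}$ realises $I(u,v)=c=c_\infty$. Equality throughout the chain of the first paragraph then forces $\tau_{(u,v)}=1$ together with $\int_{\R^N}\bigl((V_1+\lambda_1)u^2+(V_2+\lambda_2)v^2\bigr)dx=0$. Because $\lambda:=\max\{\lambda_1,\lambda_2\}>0$, say $\lambda_1>0$, the strict positivity $V_1+\lambda_1>0$ forces $u\equiv 0$, so $(0,v)\in\mathcal{N}$ and $v\in\mathcal{M}_2$. Lemma \ref{lm2.1} then yields $c_\infty=I(0,v)=J_2(v)\ge m_2=\frac{\mathcal{S}_{HL}^2}{4\mu_2}$, which contradicts the strict inequality $c_\infty=\frac{(k_1+k_2)\mathcal{S}_{HL}^2}{4}<\frac{\mathcal{S}_{HL}^2}{4\mu_2}$ (equivalent to $(\beta-\mu_2)^2>0$, hence true because $\beta>\mu_2$). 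The delicate point is the construction in the second paragraph: the $\lambda_j u^2$ term is invariant under translation, so the usual translate-to-infinity trick fails; one is forced to concentrate via $\delta_n\to 0$, and it is precisely the $L^2$-integrability of $U_{1,0}$ when $N\ge 5$ that makes $\lambda_j\int|U_{\delta_n,0}|^2dx\to 0$ and thus closes the argument.
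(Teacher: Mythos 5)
Your proof is correct, and the two inequalities $c_\infty\le c$ and $c\le c_\infty$ are established exactly as in the paper (projection via Lemma~\ref{lm2.2} for one direction, concentrating bubbles $U_{\delta_n,0}$ with $\delta_n\to 0^+$ for the other). The only point of genuine divergence is the closing contradiction in the non-attainment step. Both arguments first deduce $\tau_0=1$ together with $\int_{\R^N}\bigl((V_1+\lambda_1)u_0^2+(V_2+\lambda_2)v_0^2\bigr)dx=0$. The paper then notes that $\tau_0=1$ means $I_\infty(u_0,v_0)=c_\infty$, so $(u_0,v_0)$ is a ground state of the limit system; the energy bound $c_\infty<\min\{m_1^\infty,m_2^\infty\}$ forces both components to be non-trivial, the strong maximum principle gives $u_0>0$, $v_0>0$ everywhere, and since $\lambda>0$ the vanishing integral is then impossible. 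You instead exploit the integral identity directly: with $\lambda_1>0$ (WLOG) the weight $V_1+\lambda_1$ is bounded below by $\lambda_1>0$, so $u_0\equiv 0$; then $(0,v_0)\in\mathcal{N}$ forces $v_0\in\mathcal{M}_2$, and Lemma~\ref{lm2.1} yields $c_\infty=J_2(v_0)\ge m_2=\frac{\mathcal{S}_{HL}^2}{4\mu_2}$, contradicting $c_\infty=\frac{(k_1+k_2)\mathcal{S}_{HL}^2}{4}<\frac{\mathcal{S}_{HL}^2}{4\mu_2}$ (which holds precisely because $(\beta-\mu_2)^2>0$). Your route is a bit more elementary — it avoids the strong maximum principle entirely — at the cost of an explicit case distinction on which $\lambda_j$ is positive; the paper's route is symmetric in the two components and more directly exhibits that the putative minimizer would also be a ground state of the limit problem, which is structurally convenient elsewhere in the paper.
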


\begin{proof}
Let $\delta_n \to 0^+$ as $n\to \infty$, we define  $\tilde{\Phi}_{n}(x)=U_{\delta_n,0}(x)$. Following by the arguments in the proof of Lemma \ref{lm2.1}, we then get
\begin{equation}\label{add-guoguo}
 \int_{\R^N}(V_{1}(x)+\lambda_1 )|\tilde{\Phi}_{n}(x)|^2dx+\int_{\R^N}(V_{2}(x)+\lambda_2 )|\tilde{\Phi}_{n}(x)|^2dx =o_{n}(1).
\end{equation}
Let $t_n>0$ be such that
\begin{equation*}
t_{n}^{2}=\frac{(k_1+k_2)\int_{\R^N}|\nabla\tilde{\Phi}_n|^{2}dx+k_{1}\int_{\R^N}(V_{1}(x)+\lambda_1 )|\tilde{\Phi}_n|^{2}dx+k_{2}\int_{\R^N}(V_{2}(x)+\lambda_2 )|\tilde{\Phi}_n|^{2}dx}
{(k_1+k_2)\int_{\R^N}\int_{\R^N}\frac{|\tilde{\Phi}_n(x)|^{2}|\tilde{\Phi}_n(y)|^2}{|x-y|^4}dxdy},
\end{equation*}
then $(t_n\sqrt{k_1} \tilde{\Phi}_n,  t_n\sqrt{k_2} \tilde{\Phi}_n )\in \mathcal{N}$ and $t_n\to 1$ as $n\to \infty$, in which we use \eqref{add-guoguo} and
$$
\mu_{1}k_{1}^{2}+\mu_{2}k_{2}^{2}+2\beta k_1k_2=k_1+k_2.
$$
 Moreover,
\begin{align*}
c&\leq I(t_n\sqrt{k_1} \tilde{\Phi}_n,  t_n\sqrt{k_2} \tilde{\Phi}_n) \\
&=\frac{t_{n}^{2}(k_1+k_2)}{4}\int_{\R^N}|\nabla\tilde{\Phi}_n|^{2}dx+\frac{t_{n}^{2}k_{1}}{4}\int_{\R^N}(V_{1}(x)+\lambda_1 )|\tilde{\Phi}_n|^{2}dx
+\frac{t_{n}^{2}k_{2}}{4}\int_{\R^N}(V_{2}(x)+\lambda_2 )|\tilde{\Phi}_n|^{2}dx \\
&=\frac{1}{4}(k_1+k_2)\int_{\R^N}|\nabla\tilde{\Phi}_n|^{2}dx+o_{n}(1)\\
&=\frac{1}{4}(k_1+k_2)\mathcal{S}_{HL}^{2}+o_{n}(1).
\end{align*}
Taking the  limit $n\to \infty$ in the equality above, then
$$c\leq \frac{1}{4}(k_1+k_2)\mathcal{S}_{HL}^{2}=c_{\infty}.$$

 Let $(u,v)\in \mathcal{N}$ be arbitrarily chosen and set $\tau_{(u,v)}>0$ be such that $(\tau_{(u,v)}u, \tau_{(u,v)} v)\in \mathcal{N}_{\infty}$. Recalling that $\lambda_j$ and $V_j(x)$ are both nonnegative,    then by Lemma \ref{lm2.2} we get $\tau_{(u,v)}\leq 1$.
Furthermore, by computation we get
\begin{align*}
  c_{\infty}&\leq I_{\infty}(\tau_{(u,v)}u, \tau_{(u,v)}v)\\
  &=\frac{1}{4}\tau_{(u,v)}^{2}\big(\int_{\R^N}|\nabla u|^{2}dx+ \int_{\R^N}|\nabla v|^{2}dx\big) \\
  &\leq \frac{1}{4}\big(\int_{\R^N}|\nabla u|^{2}dx+ \int_{\R^N}|\nabla v|^{2}dx\big)+ \frac{1}{4}\int_{\R^N}(V_1(x)+\lambda_1 )u^{2}dx+\frac{1}{4}\int_{\R^N}(V_2(x)+\lambda_2 )v^{2}dx \\
  &= I(u,v),
\end{align*}
which leads to $c_{\infty}\leq c$. Therefore, $c=c_{\infty}$.

In the end of the proof, we shall to prove that $c$  is not  attained. We argue by contradiction and assume that $c$ is attained by $(u_0,v_0)\in \mathcal{N}$.  It follows by Lemma \ref{lm2.2} that there exists $\tau_0\in (0,1]$ such that $(\tau_0u_0,\tau_0v_0)\in \mathcal{N}_{\infty}$. Thus, by calculation  we have
\begin{align*}
  c_{\infty}&\leq I_{\infty}(\tau_0u_0,\tau_0v_0)\\
   &=\frac{\tau_{0}^{2}}{4}\big(\int_{\R^N}|\nabla u_0|^{2}dx+ \int_{\R^N}|\nabla v_0|^{2}dx\big)\\
   &\leq \frac{1}{4}\big(\int_{\R^N}|\nabla u_0|^{2}dx+ \int_{\R^N}|\nabla v_0|^{2}dx\big)+\frac{1}{4}\int_{\R^N}(V_{1}(x)+\lambda_1) | u_0|^{2}dx+\frac{1}{4}\int_{\R^N}(V_{2}(x)+\lambda_2)|v_0|^{2}dx\\
   &=c=c_\infty,
\end{align*}
which implies that
\begin{equation}\label{e-2.9}
\tau_0=1 \ \ \text{and} \ \ \int_{\R^N}(V_{1}(x)+\lambda_1)|u_0|^{2}dx+\int_{\R^N}(V_{2}(x)+\lambda_2)|v_0|^{2}dx=0.
\end{equation}
Moreover, $c_\infty$ is attained by $(u_0,v_0)$.  Since $\beta > \max \{\mu_1,\mu_2\}$, then  by Lemma \ref{lm2.1}  we get
\begin{equation*}
  c_\infty=\frac{1}{4}(k_1+k_2)\mathcal{S}_{HL}^{2}<\min\{\frac{1}{4\mu_1}\mathcal{S}_{HL}^{2}, \frac{1}{4\mu_2}\mathcal{S}_{HL}^{2}  \}=\min\{m_{1}^{\infty}, m_{2}^{\infty} \},
\end{equation*}
which implies that $u_0 \neq 0$ and $v_0\neq 0$. By the strong maximum principle,  $u_0>0$ and $v_0>0$ for all $x\in \R^N$. Thus, there is a contradiction with  \eqref{e-2.9} because of $V_{1}(x), V_2(x)\geq 0$ and  $\lambda_1, \lambda_2\geq 0$ with $ \lambda:=\max\{\lambda_1,\lambda_2\}>0$.  Therefore, based on the arguments above, we prove that $c$ is not attained.


\end{proof}

\section{Some compactness results}\label{s3}

\qquad In this paper, the problem we studied is affected by  lack of compactness due to the unboundedness of $\R^N$ and to the critical exponent. In this section, we introduce a global compactness result and investigate the behavior of Palais-Smale sequences of $I$, see Proposition \ref{th3.1} below. We would like to point out that, a version of that compactness result for single equations and coupled systems can be found in \cite{BC,LL,PPW,S} by different approach.

We recall
that a sequence $\{(u_n,v_n)\}\subset H$ is called a Palais-Smale sequence for $I$, if
$$
\sup_{n}|I(u_n,v_n)|<+\infty, \ \ I'(u_n,v_n)\to 0 \ \ \text{in} \ \  H^{-1},
$$
where $H^{-1}$ is the dual space of $H$.

Before stating the global compactness result,  we  first introduce  a Br\'{e}zis-Lieb type lemma for nonlocal terms, which is proved in \cite{GLMY}, so we omit it here.

\begin{lem}\label{lm3.2}(\cite[Lemma 4.1]{GLMY}) Let $N\geq 5$ and  $\{(u_n,v_n)\}$ to be a bounded sequence in $L^{\frac{2N}{N-2}}(\R^N)\times L^{\frac{2N}{N-2}}(\R^N)$ such that $(u_n,v_n) \to (u,v)$ a.e. in $\R^N$ as $n\to \infty$, then
\begin{align*}
  &\quad \lim_{n\to \infty} \Big(\int_{\R^N}(|x|^{-4}*|u_n^{+}|^2)|u_n^+|^{2}dx-\int_{\R^N}(|x|^{-4}*|(u_n-u)^+|^{2})|(u_n-u)^+|^2dx \Big)\\
  &=\int_{\R^N}(|x|^{-4}*|u^{+}|^2)|u^+|^{2}dx
  \end{align*}
and
\begin{align*}
  &\quad \lim_{n\to \infty} \Big(\int_{\R^N}(|x|^{-4}*|u_n^{+}|^2)|v_n^+|^{2}dx-\int_{\R^N}(|x|^{-4}*|(u_n-u)^+|^{2})|(v_n-v)^+|^2dx \Big)\\
  &=\int_{\R^N}(|x|^{-4}*|u^{+}|^2)|v^+|^{2}dx,
  \end{align*}
\end{lem}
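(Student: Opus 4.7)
The plan is to adapt the Brezis-Lieb decomposition to the Hartree nonlocal structure, with care for the positive-part nonlinearity. The Hardy-Littlewood-Sobolev inequality (Lemma 2.1 above with $\alpha=4$ and $p=r=N/(N-2)$) realises the bilinear form $B(\rho,\sigma):=\iint \rho(x)\sigma(y)|x-y|^{-4}\,dx\,dy$ as a continuous form on $L^{N/(N-2)}(\R^N)\times L^{N/(N-2)}(\R^N)$; equivalently, the convolution $|x|^{-4}*(\cdot)\colon L^{N/(N-2)}\to L^{N/2}$ is bounded. Since $\{u_n\},\{v_n\}$ are bounded in $L^{2^*}(\R^N)=L^{2N/(N-2)}(\R^N)$, each of the densities $|u_n^+|^2$, $|(u_n-u)^+|^2$, $|v_n^+|^2$, $|(v_n-v)^+|^2$ is bounded in $L^{N/(N-2)}(\R^N)$.

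The key pointwise input is the a.e.\ identity
\[
|u_n^+|^2 - |(u_n-u)^+|^2 \to |u^+|^2 \quad \text{a.e.\ in } \R^N,
\]
together with its analogue for $v_n$. Both follow from $u_n\to u$ a.e.\ by case analysis on the sign of $u(x)$: if $u(x)>0$ then $u_n(x)>0$ eventually so that $u_n^+=u_n\to u$ and $(u_n-u)^+\to 0$; if $u(x)<0$ both terms tend to zero, which equals $|u^+|^2$; if $u(x)=0$ the two terms coincide exactly. Writing $w_n=u_n-u$ and $A_n:=|u_n^+|^2-|w_n^+|^2$, a bilinear expansion of $B$ yields
\begin{align*}
B(|u_n^+|^2,|u_n^+|^2) - B(|w_n^+|^2,|w_n^+|^2) = B(A_n,A_n) + 2\, B(A_n,|w_n^+|^2).
\end{align*}
Combining the pointwise identity with $L^{N/(N-2)}$-boundedness of $A_n$ yields $A_n\rightharpoonup|u^+|^2$ weakly in $L^{N/(N-2)}$, and one deduces $B(A_n,A_n)\to B(|u^+|^2,|u^+|^2)$ by applying the standard Brezis-Lieb lemma directly to the densities in $L^{N/(N-2)}$ (rather than at the function level in $L^{2^*}$) and then pushing the result through $B$ via HLS. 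The cross term $B(A_n,|w_n^+|^2)$ is then shown to vanish by splitting $A_n = |u^+|^2 + (A_n-|u^+|^2)$: the first piece disappears because $|w_n^+|^2\rightharpoonup 0$ weakly in $L^{N/(N-2)}$ paired against the fixed element $|x|^{-4}*|u^+|^2\in L^{N/2}$, while the second is controlled by HLS together with $A_n-|u^+|^2\to 0$ a.e.\ and uniform integrability. The second identity in the lemma, involving $|v_n^+|^2$ in place of one of the factors $|u_n^+|^2$, is proved via the same decomposition by setting also $B_n:=|v_n^+|^2-|(v_n-v)^+|^2$ and using the analogous pointwise identity for $v_n$.

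The main obstacle is controlling the cross terms, since a.e.\ convergence plus uniform $L^p$-boundedness only delivers weak convergence in $L^{N/(N-2)}$, and the quadratic pairing $B(\cdot,\cdot)$ is not continuous on weak--weak pairs. The resolution relies on the specific positive-part structure of the densities: on the set $\{u>0\}$ one has $u_n>0$ eventually, so that $u_n^+-u^+=u_n-u$ and the difference $|u_n^+|^2-|(u_n-u)^+|^2$ admits a pointwise bound of the form $C(|u|+|w_n|)|w_n|$, which is uniformly integrable against $|x|^{-4}*|u^+|^2$; combined with the compactness properties of the Riesz convolution implicit in HLS, this allows one to pass to the limit in the bilinear pairing.
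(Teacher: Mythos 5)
The paper does not prove this lemma at all; it simply cites \cite[Lemma 4.1]{GLMY}, so there is no in-paper proof to compare against line by line. That said, your sketch follows the natural route, but it contains two genuine errors.

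First, the pointwise bound you state at the end is wrong. You claim $\bigl||u_n^+|^2-|(u_n-u)^+|^2\bigr|\le C(|u|+|w_n|)|w_n|$ with $w_n=u_n-u$; taking $w_n\equiv 0$ (i.e.\ $u_n\equiv u$) the left side equals $|u^+|^2$ while the right side is $0$, so the inequality fails whenever $u^+\not\equiv 0$. The correct estimate obtained from the Lipschitz property of $t\mapsto t^+$ is $\bigl|u_n^+-(u_n-u)^+\bigr|\le |u|$ and hence $|A_n|\le |u|\,(2|w_n|+|u|)$, where $A_n:=|u_n^+|^2-|(u_n-u)^+|^2$. Writing $u_n^+=(u_n-u)^++r_n$ with $|r_n|\le |u|$ and $|r_n-u^+|\le 2|w_n|$, one further gets the key bound $\bigl|A_n-|u^+|^2\bigr|\le C\,|u|\,|w_n|$, which is the bound you actually need and which vanishes as $w_n\to 0$.

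Second, the inference ``$A_n\rightharpoonup|u^+|^2$ weakly in $L^{N/(N-2)}$, then apply Brezis--Lieb to the densities, therefore $B(A_n,A_n)\to B(|u^+|^2,|u^+|^2)$'' is not valid: the Brezis--Lieb lemma gives an asymptotic identity for the $L^{N/(N-2)}$ norm of $A_n$, not convergence of the bilinear Riesz form, and weak--weak continuity of $B$ fails. What one actually needs, and what is true here, is \emph{strong} convergence $A_n\to |u^+|^2$ in $L^{N/(N-2)}(\R^N)$. This follows from the corrected bound $\bigl|A_n-|u^+|^2\bigr|\le C|u||w_n|$ combined with a Young-inequality splitting $|u||w_n|\le \varepsilon|w_n|^2+C_\varepsilon|u|^2$ and dominated convergence on the piece $(|A_n-|u^+|^2|-C_\varepsilon|u|^2)^+$, exactly as in the standard Brezis--Lieb mechanism. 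Once strong $L^{N/(N-2)}$ convergence of $A_n$ is in hand, continuity of $B$ on $L^{N/(N-2)}\times L^{N/(N-2)}$ gives $B(A_n,A_n)\to B(|u^+|^2,|u^+|^2)$ immediately, and the cross term $B(A_n,|w_n^+|^2)=B(|u^+|^2,|w_n^+|^2)+B(A_n-|u^+|^2,|w_n^+|^2)$ tends to $0$ because $|w_n^+|^2\rightharpoonup 0$ in $L^{N/(N-2)}$ paired with the fixed $L^{N/2}$ function $|x|^{-4}*|u^+|^2$, while the second piece is bounded via HLS by $C\|A_n-|u^+|^2\|_{N/(N-2)}\||w_n^+|^2\|_{N/(N-2)}\to 0$. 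The second identity in the lemma then follows by the same argument applied to the mixed form $B(|u_n^+|^2,|v_n^+|^2)$ using the analogous $B_n=|v_n^+|^2-|(v_n-v)^+|^2$. In short: your decomposition and a.e.\ identity are right, but you must correct the pointwise bound and replace weak convergence by strong $L^{N/(N-2)}$ convergence of $A_n$.
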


\begin{lem}\label{lm2.5}
Suppose that $\lambda_1,\lambda_2\geq 0$ with $\lambda:=\max\{\lambda_1,\lambda_2\}>0$ and suppose that $(u,v)$ is the solution of
\begin{equation}\label{S3}
  \begin{cases}
    -\Delta u+ \lambda_1 u=\mu_1(|x|^{-4}*u^{2})u+\beta (|x|^{-4}*v^{2})u, \ \  x\in \R^N, \\
    -\Delta v+ \lambda_2 v=\mu_2(|x|^{-4}*v^{2})v+\beta (|x|^{-4}*u^{2})v, \ \  x\in \R^N.
  \end{cases}
\end{equation}
$(a)$ If $\lambda_1>0,\lambda_2>0$, then $(u,v)=(0,0)$.
\\
$(b)$ If $\lambda_1=0$, $\lambda_2> 0$, then $v=0$ and $u$ solves
\begin{equation}\label{eq2.19}
-\Delta u=\mu_1(|x|^{-4}*u^{2})u.
\end{equation}
$(c)$ If $\lambda_1> 0$, $\lambda_2=0$, then $u=0$  and $v$ solves
$$
-\Delta v=\mu_2(|x|^{-4}*v^{2})v.
$$
\end{lem}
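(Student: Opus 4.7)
The plan is to combine two scalar integral identities for solutions of \eqref{S3}: the Nehari-type relation obtained by pairing the system with $(u,v)$, and a Pohozaev-type relation obtained by pairing with $(x\cdot\nabla u,\,x\cdot\nabla v)$. Since $|x|^{-4}$ is the Hardy--Littlewood--Sobolev critical kernel for $N\geq 5$, both the Dirichlet form and every nonlocal quartic form are invariant under the rescaling $u\mapsto\delta^{(N-2)/2}u(\delta\,\cdot)$, whereas the mass terms scale like $\delta^{-2}$. Consequently the two identities share the same right-hand side up to the fixed ratio $(N-2)/2:1$, and a single linear combination eliminates every term except the masses.

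Concretely, writing $A_{ij}:=\int_{\R^N}(|x|^{-4}\ast u_i^{2})u_j^{2}\,dx$ with $u_1=u$, $u_2=v$, the Nehari identity reads
\begin{equation*}
\int_{\R^N}(|\nabla u|^{2}+|\nabla v|^{2})dx+\lambda_1\|u\|_{L^2}^{2}+\lambda_2\|v\|_{L^2}^{2}=\mu_1A_{11}+\mu_2A_{22}+2\beta A_{12},
\end{equation*}
while the expected Pohozaev identity is
\begin{equation*}
\tfrac{N-2}{2}\!\int_{\R^N}(|\nabla u|^{2}+|\nabla v|^{2})dx+\tfrac{N\lambda_1}{2}\|u\|_{L^2}^{2}+\tfrac{N\lambda_2}{2}\|v\|_{L^2}^{2}=\tfrac{N-2}{2}\bigl(\mu_1A_{11}+\mu_2A_{22}+2\beta A_{12}\bigr).
\end{equation*}
The two computational inputs needed for Pohozaev are the diagonal formula $\int(|x|^{-4}\ast u^{2})u\,(x\cdot\nabla u)\,dx=-\tfrac{N-2}{2}A_{11}$, obtained by integration by parts followed by the symmetrization
\begin{equation*}
-4\iint\tfrac{x\cdot(x-y)}{|x-y|^{6}}u^{2}(x)u^{2}(y)\,dxdy=-2\iint\tfrac{|x-y|^{2}}{|x-y|^{6}}u^{2}(x)u^{2}(y)\,dxdy=-2A_{11},
\end{equation*}
coming from $x\leftrightarrow y$ and $(x-y)\cdot(x-y)=|x-y|^{2}$, together with the analogous \emph{summed} cross formula
\begin{equation*}
\int_{\R^N}\!(|x|^{-4}\ast v^{2})u(x\cdot\nabla u)\,dx+\int_{\R^N}\!(|x|^{-4}\ast u^{2})v(x\cdot\nabla v)\,dx=-(N-2)A_{12},
\end{equation*}
which requires summing the two cross contributions \emph{before} symmetrizing (the individual pieces are not symmetric in $x,y$, but the sum is). Combining these with the classical $\int(-\Delta u_j)(x\cdot\nabla u_j)=-\tfrac{N-2}{2}\|\nabla u_j\|_{L^2}^{2}$ and $\lambda_j\int u_j(x\cdot\nabla u_j)=-\tfrac{N\lambda_j}{2}\|u_j\|_{L^2}^{2}$ assembles precisely the Pohozaev identity above. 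Multiplying the Nehari identity by $(N-2)/2$ and subtracting from Pohozaev then yields
\begin{equation*}
\lambda_1\|u\|_{L^2}^{2}+\lambda_2\|v\|_{L^2}^{2}=0,
\end{equation*}
from which parts (a), (b), (c) follow at once: in (a) both masses vanish so $u=v=0$; in (b) only $\|v\|_{L^2}=0$, whence $v\equiv 0$ and the first equation collapses to \eqref{eq2.19}; (c) is symmetric.

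The main obstacle is rigorously justifying the Pohozaev test function, since $x\cdot\nabla u$ is not a priori in the working space $H$, and this is especially delicate in cases (b) and (c) where one component lies only in $D^{1,2}(\R^N)$. The plan is a standard cutoff/approximation argument: first upgrade $(u,v)$ to $C^{2}_{\mathrm{loc}}$ by applying the Hardy--Littlewood--Sobolev inequality to the convolutions $|x|^{-4}\ast u^{2}$ and $|x|^{-4}\ast v^{2}$ and bootstrapping with standard elliptic $L^p$-estimates; then test the equations against $\eta_R(x)\,x\cdot\nabla u$ and $\eta_R(x)\,x\cdot\nabla v$, where $\eta_R\in C_c^{\infty}(B_{2R})$ equals $1$ on $B_R$, and let $R\to\infty$. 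The critical scale invariance of the Dirichlet and Riesz forms, combined with $(u,v)\in H$ and the finiteness of the quartic forms supplied by Hardy--Littlewood--Sobolev, ensures that the cutoff error terms—supported in the annulus $R\le|x|\le 2R$—vanish in the limit, producing the Pohozaev identity in its precise form and completing the proof.
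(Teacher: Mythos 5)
Your proposal is correct and follows essentially the same route as the paper: derive a Pohozaev identity by testing against cutoff multiples of $x\cdot\nabla u$, $x\cdot\nabla v$ (with the cross terms symmetrized only after summation), and combine it with the Nehari identity to cancel everything except $\lambda_1\|u\|_{L^2}^2+\lambda_2\|v\|_{L^2}^2$. The paper uses the cutoff $\phi(\rho x)$ with $\rho\to 0$ and invokes $W^{2,p}_{\mathrm{loc}}$ regularity from Moroz--Van Schaftingen, but this is the same rigorization you describe with $\eta_R$ and $R\to\infty$.
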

\begin{proof}
 In what follows, we  prove the lemma by Pohozaev identity. More precisely, our proof follows by a classical strategy of testing the equation against $x\cdot \nabla u$, which is made rigorous by multiplying by cut-off functions, see  \cite[Appendix B]{WM} and  \cite[Proposition 3.5]{MS2} for details.

Since $\lambda_1,\lambda_2\geq 0$ with $\lambda:=\max\{\lambda_1,\lambda_2\}>0$, then the following cases can happen: $\lambda_1,\lambda_2>0$, or $\lambda_1=0,\lambda_2>0$, or $\lambda_1>0,\lambda_2=0$.

Let $(u,v)$ be a solution of system \eqref{S3}. Then by the standard elliptic regularity theory,  $(u,v)\in W_{loc}^{2,p}(\R^N)\times W_{loc}^{2,p}(\R^N)$, $p\geq 1$, see \cite[Theorem 2]{MS2}. Let $\phi(x)\in C_{0}^{1}(\R^N)$ be a cut-off function such that $\phi(x)=1$ on $B_{1}(0)$.  Multiplying the first equation by $u_{\rho}=\phi(\rho x)(x\cdot \nabla u)$, the second equation by $v_{\rho}=\phi(\rho x)(x\cdot\nabla v)$ and integrating by parts,  then the following properties hold:
   \begin{equation}\label{e-2.10}
     \int_{\R^N}\nabla u \cdot \nabla u_{\rho}dx+ \lambda_1 \int_{\R^N}uu_{\rho}dx=\mu_1\int_{\R^N}(|x|^{-4}*u^{2})uu_{\rho}dx+\beta\int_{\R^N}(|x|^{-4}*v^{2})uu_{\rho}dx
   \end{equation}
 and
  \begin{equation}\label{e-2.11}
     \int_{\R^N}\nabla v \cdot \nabla v_{\rho}dx+\lambda_2 \int_{\R^N}vv_{\rho}dx=\mu_2\int_{\R^N}(|x|^{-4}*v^{2})vv_{\rho}dx+\beta\int_{\R^N}(|x|^{-4}*u^{2})v v_{\rho}dx.
   \end{equation}
We compute for every $\rho>0$,
\begin{align*}
  &\quad\int_{\R^N}\nabla u \cdot \nabla u_{\rho}dx\\
  &=\int_{\R^N}\phi(\rho x)\Big(  |\nabla u|^{2}+ x\cdot \nabla\big(\frac{|\nabla u|^{2}}{2}\big)(x)\Big)dx+\int_{\R^N}\big(\nabla u\cdot\nabla \phi(\rho x) \big)  (\rho x\cdot \nabla u)dx \\
  &=\int_{\R^N}\Big( (2-N) \phi(\rho x)-\rho x \cdot \nabla \phi (\rho x) \Big)\frac{|\nabla u(x)|^{2}}{2}dx+\int_{\R^N}\big(\nabla u\cdot\nabla \phi(\rho x)\big)  (\rho x\cdot \nabla u)dx .
\end{align*}
By Lebesgue's dominated convergence theorem, we have
\begin{equation}\label{add-1}
  \lim_{\rho \to 0} \int_{\R^N}\nabla u \cdot \nabla u_{\rho} dx=\frac{2-N}{2}\int_{\R^N}|\nabla u|^{2}.
\end{equation}
Next, by calculation we have
\begin{align*}
  \int_{\R^N}uu_{\rho}dx&=  \int_{\R^N}u(x)\phi(\rho x)x\cdot \nabla u(x)dx\\
  &= \int_{\R^N}\phi(\rho x)x\cdot \nabla (\frac{u^{2}}{2})(x)dx  \\
  &=  - \int_{\R^N}\big(N \phi(\rho x)+\rho x \cdot \nabla \phi(\rho x)\big)\frac{|u(x)|^{2}}{2}dx.
\end{align*}
By  Lebesgue's dominated convergence theorem again, it holds that
\begin{equation}\label{e-2.12}
\lim_{\rho\to0}  \int_{\R^N}u u_{\rho}dx=-\frac{N}{2} \int_{\R^N}|u|^{2}dx.
\end{equation}
By a similar argument as above, we can also use the Lebesgue's dominated convergence theorem to conclude that
\begin{align}\label{e-2.13}
\lim_{\rho \to 0}\int_{\R^N}(|x|^{-4}*u^{2})uu_{\rho}dx &=\lim_{\rho \to 0}\int_{\R^N}(|x|^{-4}*u^{2})\phi(\rho x)x\cdot \nabla (\frac{u^{2}}{2})(x)dx\nonumber\\
&=-\lim_{\rho \to 0}\int_{\R^N}(|x|^{-4}*u^{2})\frac{u^{2}}{2}[N\phi(\rho x)+\rho x\cdot \nabla \phi(\rho x)]dx\nonumber\\
&\qquad +\lim_{\rho \to 0}\int_{\R^N}\int_{\R^N}\frac{2x(x-y)|u(x)|^2|u(y)|^2\phi(\rho x)}{|x-y|^6}dxdy\nonumber\\
& =-\frac{N}{2}\int_{\R^N}(|x|^{-4}*u^{2})u^2dx+\int_{\R^N}\int_{\R^N}\frac{(2x^2-2xy)|u(x)|^2|u(y)|^2}{|x-y|^6}dxdy\nonumber\\
&=-\frac{N}{2}\int_{\R^N}(|x|^{-4}*u^{2})u^2dx+\int_{\R^N}\int_{\R^N}\frac{(x^2-2xy+y^2)|u(x)|^2|u(y)|^2}{|x-y|^6}dxdy\nonumber\\
&=-\frac{N-2}{2}\int_{\R^N}(|x|^{-4}*u^{2})u^2dx.
\end{align}
Arguing as above,  we can also prove that
\begin{align}\label{e-2.14}
 &\quad \lim_{\rho \to 0} \int_{\R^N}(|x|^{-4}*v^{2})uu_{\rho} dx\nonumber\\
  &=  -\frac{N}{2}\int_{\R^N}(|x|^{-4}*v^{2})u^2dx+\int_{\R^N}\int_{\R^N}\frac{2x(x-y)|u(x)|^2|v(y)|^2}{|x-y|^6}dxdy.
\end{align}
 Therefore, by \eqref{add-1}-\eqref{e-2.14},  we  prove  that \eqref{e-2.10} equals to
 \begin{align}\label{e-2.15}
   &\quad -\frac{N-2}{2}\int_{\R^N}|\nabla u|^{2}dx-\frac{ N\lambda_1}{2} \int_{\R^N}|u|^{2}dx   \nonumber\\
   &= -\frac{N-2}{2}\mu_1\int_{\R^N}(|x|^{-4}*u^{2})u^2dx-\frac{N}{2}\beta\int_{\R^N}(|x|^{-4}*v^{2})u^2dx  \\
   & \quad +\beta\int_{\R^N}\int_{\R^N}\frac{(2x^2-2xy)|u(x)|^2|v(y)|^2}{|x-y|^6}dxdy.     \nonumber
 \end{align}
Repeating the argument as above, we  can  also prove that  \eqref{e-2.11} equals to
\begin{align}\label{e-2.16}
  & \quad -\frac{N-2}{2}\int_{\R^N}|\nabla v|^{2}dx-\frac{N\lambda_2}{2} \int_{\R^N}|v|^{2}dx  \nonumber \\
& =-\frac{N-2}{2}\mu_2\int_{\R^N}(|x|^{-4}*v^{2})v^2dx    -\frac{N}{2}\beta\int_{\R^N}(|x|^{-4}*u^{2})v^2dx \\
  & \quad +\beta\int_{\R^N}\int_{\R^N}\frac{(2x^2-2xy)|v(x)|^2|u(y)|^2}{|x-y|^6}dxdy.  \nonumber
 \end{align}
 Note that
 \begin{align*}
&\quad \int_{\R^N}\int_{\R^N}\frac{\beta(2x^2-2xy)|u(x)|^2|v(y)|^2}{|x-y|^6}dxdy+\int_{\R^N}\int_{\R^N}\frac{\beta(2x^2-2xy)|v(x)|^2|u(y)|^2}{|x-y|^6}dxdy\\
 &=\int_{\R^N}\int_{\R^N}\frac{\beta(2x^2-2xy)|u(x)|^2|v(y)|^2}{|x-y|^6}dxdy+\int_{\R^N}\int_{\R^N}\frac{\beta(2y^2-2xy)|u(x)|^2|v(y)|^2}{|x-y|^6}dxdy\\
 &=\int_{\R^N}\int_{\R^N}\frac{\beta(2x^2-4xy+2y^2)|u(x)|^2|v(y)|^2}{|x-y|^6}dxdy\\
 &=2\beta\int_{\R^N}\int_{\R^N}\frac{|u(x)|^2|v(y)|^2}{|x-y|^4}dxdy,
 \end{align*}
by \eqref{e-2.15}-\eqref{e-2.16} we get
  \begin{align}\label{e-2.17}
 & \ \  -\frac{N-2}{2}\int_{\R^N}(|\nabla u|^{2}+|\nabla v|^{2})dx-\frac{N}{2}\int_{\R^N}(\lambda_1u^{2}+ \lambda_2 v^2)dx \\
 &= -\frac{N-2}{2}\Big[\mu_1 \int_{\R^N}(|x|^{-4}*u^{2})u^2dx+ \mu_2\int_{\R^N}(|x|^{-4}*v^{2})v^2dx \Big]-(N-2)\beta\int_{\R^N}(|x|^{-4}*v^{2})u^2dx.      \nonumber
 \end{align}
On the  other hand, since $(u,v)$ is a pair solution of  system \eqref{S3}, then
\begin{align}\label{e-2.18}
 & \quad \int_{\R^N}(|\nabla u|^{2}+|\nabla v|^{2})dx+\int_{\R^N}(\lambda_1u^{2}+\lambda_2 v^2)dx
 \\
  & =\mu_1\int_{\R^N}(|x|^{-4}*u^{2})u^2dx+\mu_2\int_{\R^N}(|x|^{-4}*v^{2})v^2dx+2\beta\int_{\R^N}(|x|^{-4}*v^{2})u^2dx. \nonumber
\end{align}
Thus, it follows by \eqref{e-2.17} and \eqref{e-2.18} that
\begin{equation}\label{Guo-Non}
  \int_{\R^N} (\lambda_1u^{2}+\lambda_2v^{2})dx=0.
\end{equation}
If $\lambda_1>0 $ and $\lambda_2>0$,  then by \eqref{Guo-Non}, we have $(u,v)=(0,0)$, thus $(a)$ occurs. If $\lambda_1=0$ and $\lambda_2>0$, then by \eqref{Guo-Non} again, we get $v$=0, moreover  $u$  solves
\begin{equation*}
-\Delta u=\mu_1(|x|^{-4}*u^{2})u.
\end{equation*}
Thus, $(b)$ is right. Finally, repeating the arguments above, we can also prove that $(c)$ is true.
\end{proof}

\begin{thm}\label{th3.1}Suppose that $\lambda_1,\lambda_2\geq 0$ with $\lambda:=\max\{\lambda_1,\lambda_2\}>0$, let  $(A_1)$-$(A_2)$ hold and $\{(u_n,v_n)\}$ $\subset H$ be a Palais-Smale sequence  of functional $I$ at level $d$. \\
 $(a)$\,If $\lambda_1>0,\lambda_2>0$, then  there exist a solution $(u^0,v^0)$ of system \eqref{S-4}, $\ell$ sequences of positive numbers $\{\sigma_{n}^{k}\}$ $(1\leq k\leq \ell)$ and $\ell$  sequences of points $\{y_{n}^{k}\}$ $(1\leq k\leq \ell)$ in $\R^N$, such that  \\
\begin{align*} \label{e-3.1}
  \|(u_n,v_n)\|_{H}^{2}=\|(u^0,v^0)\|_{H}^{2}
  +\sum_{k=1}^{\ell}\|\Big((\sigma_{n}^{k})^{-\frac{N-2}{2}}u^k(\frac{x-y_{n}^{k}}{\sigma_{n}^{k}}),\ (\sigma_{n}^{k})^{-\frac{N-2}{2}}v^k(\frac{x-y_{n}^{k}}{\sigma_{n}^{k}})\Big)\|_{H_{0}}^{2}+o_{n}(1)
\end{align*}
and
\begin{equation*} \label{e-3.2}
I(u_n,v_n)=I(u^0,v^0)+\sum_{k=1}^{\ell}I_{\infty}(u^k,v^k)+o_{n}(1),
\end{equation*}
where $\sigma_{n}^{k}\to 0$ as $n\to\infty$ and  $(u^k,v^k)\neq (0,0)$ solves  system \eqref{S2}. \\
$(b)$\,If $\lambda_1=0,\lambda_2>0$, then there exist a solution $(u^0,v^0)$ of system \eqref{S-4}, $\ell_{1}$ sequence of points $\{z_n^{k}\}$ $(1\leq k\leq \ell_1)$, $\ell_2$ sequences of positive numbers $\{\sigma_{n}^{k}\}$ $(1\leq k\leq \ell_2)$ and $\ell_2$  sequences of points $\{y_{n}^{k}\}$ $(1\leq k\leq \ell_2)$ in $\R^N$ such that
\begin{align*}
  \|(u_n,v_n)\|_{H}^{2}&=\|(u^0,v^0)\|_{H}^{2}+\sum_{k=1}^{\ell_1}\|(w^{k}(x-z_{n}^k),0)\|^2_{H_0}\\
  &\quad+\sum_{k=1}^{\ell_2}\|\Big((\sigma_{n}^{k})^{-\frac{N-2}{2}}u^k(\frac{x-y_{n}^{k}}{\sigma_{n}^{k}}),\ (\sigma_{n}^{k})^{-\frac{N-2}{2}}v^k(\frac{x-y_{n}^{k}}{\sigma_{n}^{k}})\Big)\|_{H_0}^{2}+o_{n}(1)
\end{align*}
and
\begin{equation*}
I(u_n,v_n)=I(u^0,v^0)+\sum_{k=1}^{\ell_1}I_{\infty}(w^k,0)+\sum_{k=1}^{\ell_2}I_{\infty}(u^k,v^k)+o_{n}(1),
\end{equation*}
where $|z_{n}^k|\to \infty$, $\sigma_{n}^{k}\to 0$ as $n\to\infty$, $(u^k,v^k)\not\equiv(0,0)$ solves system \eqref{S2} and $(w^{k},0)$ is a semi-trivial solution  of system \eqref{S2}. \\
$(c)$\,If $\lambda_1>0,\lambda_2= 0$, then there exist a solution $(u^0,v^0)$ of system \eqref{S-4}, $\ell_{1}$ sequence of points $\{z_n^{k}\}$ $(1\leq k\leq \ell_1)$, $\ell_2$ sequences of positive numbers $\{\sigma_{n}^{k}\}$ $(1\leq k\leq \ell_2)$ and $\ell_2$  sequences of points $\{y_{n}^{k}\}$ $(1\leq k\leq \ell_2)$ in $\R^N$ such that
\begin{align*}
  \|(u_n,v_n)\|_{H}^{2}&=\|(u^0,v^0)\|_{H}^{2}+\sum_{k=1}^{\ell_1}\|(0, w^{k}(x-z_{n}^k))\|^2_{H_0}\\
  &\quad+\sum_{k=1}^{\ell_2}\|\Big((\sigma_{n}^{k})^{-\frac{N-2}{2}}u^k(\frac{x-y_{n}^{k}}{\sigma_{n}^{k}}),\ (\sigma_{n}^{k})^{-\frac{N-2}{2}}v^k(\frac{x-y_{n}^{k}}{\sigma_{n}^{k}})\Big)\|_{H_0}^{2}+o_{n}(1)
\end{align*}
and
\begin{equation*}
I(u_n,v_n)=I(u^0,v^0)+\sum_{k=1}^{\ell_1}I_{\infty}(0,w^k)+\sum_{k=1}^{\ell_2}I_{\infty}(u^k,v^k)+o_{n}(1),
\end{equation*}
where $|z_{n}^k|\to \infty$, $\sigma_{n}^{k}\to 0$ as $n\to\infty$, $(u^k,v^k)\not\equiv(0,0)$ solves system \eqref{S2} and $(0,w^{k})$ is a semi-trivial solution  of system \eqref{S2}.
\end{thm}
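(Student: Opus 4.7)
The plan is a Struwe-type iteration adapted to our coupled Hartree setting. First, combining the PS condition with $I(u_n,v_n) - \tfrac14 \langle I'(u_n,v_n), (u_n,v_n)\rangle = \tfrac14 \|(u_n,v_n)\|_H^2 + \tfrac14 \int_{\R^N}(V_1 u_n^2 + V_2 v_n^2)$ and $(A_1)$ yields boundedness in $H$. Pass to $(u^0, v^0)$ weakly in $H$, a.e., and strongly in $L^p_{loc}$ for $p<2^*$. Since $V_j \in L^{N/2}(\R^N)$, the potential integrals are weakly continuous by H\"older plus Sobolev, and testing against $C_0^\infty(\R^N)$ gives $I'(u^0,v^0)=0$, so $(u^0,v^0)$ solves \eqref{S-4}. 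Setting $(u_n^1,v_n^1):=(u_n-u^0,v_n-v^0)$, the Brezis-Lieb lemma (Lemma \ref{lm3.2}) for the nonlocal terms and the same $L^{N/2}$ compactness for the potential terms yield
\begin{equation*}
I(u_n,v_n) = I(u^0,v^0) + I_\infty(u_n^1,v_n^1) + o_n(1), \qquad \|(u_n,v_n)\|_H^2 = \|(u^0,v^0)\|_H^2 + \|(u_n^1,v_n^1)\|_{H_0}^2 + o_n(1),
\end{equation*}
and $I_\infty'(u_n^1,v_n^1)\to 0$ in $H_0^{-1}$. If $(u_n^1,v_n^1)\to 0$ strongly, we are done with $\ell=0$.

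Otherwise, I extract a bubble by a concentration-compactness argument (Lions' second principle applied to $|\nabla u_n^1|^2+|\nabla v_n^1|^2$): there exist $\sigma_n>0$ and $y_n\in\R^N$ so that the rescaled pair
\begin{equation*}
(\tilde u_n(x),\tilde v_n(x)) := \bigl(\sigma_n^{\frac{N-2}{2}} u_n^1(\sigma_n x + y_n),\ \sigma_n^{\frac{N-2}{2}} v_n^1(\sigma_n x + y_n)\bigr)
\end{equation*}
converges weakly in $D^{1,2}(\R^N)\times D^{1,2}(\R^N)$ to a non-trivial $(u^k,v^k)$ while preserving the $\dot H^1$-norm. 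After the change of variables the $V_j$-contribution vanishes (bounded $L^{2^*}$-mass against an $L^{N/2}$ weight that concentrates), the $\lambda_j$-term rescales by $\sigma_n^2$, and the nonlocal terms are scale-invariant, so the limit satisfies the scale-invariant system \eqref{S2}. In case (a), where both $\lambda_j>0$, a non-vanishing $\sigma_n$ would produce unbounded $\lambda_j$-mass, forcing $\sigma_n\to 0$, and bubbles can only concentrate; translation-only profiles are ruled out by Lemma \ref{lm2.5}(a).

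The main difficulty appears in cases (b), (c), where one $\lambda_j=0$ allows a second profile type: sequences $\{z_n^k\}$ with $|z_n^k|\to\infty$ at unit scale. For such translated profiles the component carrying a positive $\lambda_j$ must have vanishing weak limit, since otherwise shifting back would produce a non-trivial solution of \eqref{S3} with both components present, contradicting Lemma \ref{lm2.5}(b)-(c). Therefore only the $\lambda_j=0$ component survives, converging weakly in $D^{1,2}$ to a non-trivial $w^k$ solving $-\Delta w = \mu_j(|x|^{-4}*w^2)w$, yielding the semi-trivial profiles $(w^k(\cdot - z_n^k),0)$ or $(0,w^k(\cdot - z_n^k))$. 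Carefully separating, for each extracted non-compact piece of the PS sequence, whether it concentrates at a scale $\sigma_n\to 0$ or escapes to infinity at unit scale — and invoking Lemma \ref{lm2.5} to forbid mixed weak limits — is the delicate step. Finally I iterate: after subtracting the extracted profile (suitably rescaled or translated) from $(u_n^1,v_n^1)$, the residue is again a PS sequence for $I_\infty$ with energy reduced by $I_\infty$ of the profile. Each profile carries energy at least $\min\{c_\infty,\,m_1^\infty,\,m_2^\infty\}>0$ (Lemma \ref{lm2.1} and $c_\infty\le\min\{m_1^\infty,m_2^\infty\}$), so the finite initial energy $d$ forces termination after finitely many steps $\ell$ (resp.\ $\ell_1,\ell_2$). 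Assembling the norm and energy splittings at each stage produces the decompositions asserted in (a), (b), (c).
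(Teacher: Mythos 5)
Your overall strategy --- a Struwe-type profile decomposition with iterated bubble extraction, and the use of Lemma \ref{lm2.5} to distinguish concentration profiles (scale $\sigma_n^k\to 0$) from travelling ones (unit scale, $|z_n^k|\to\infty$) --- matches the paper's. However there are gaps in the execution.

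The most serious is that you assert non-triviality of the rescaled weak limit $(u^k,v^k)$ with no argument beyond citing Lions' second concentration-compactness principle. This does not suffice: knowing that the gradient measure has an atom does not by itself show that the zoomed-in sequence has a non-zero weak limit --- after rescaling, the mass in the unit ball could in principle still distribute so that the weak limit vanishes. The paper's argument is the technical heart of the theorem: it normalizes the concentration function $Q_n(\sigma_n)=\delta$ with $\delta<\min\{2c_\infty/L,\,\mathcal{S}C_0^2/2\}$, builds a cut-off via harmonic extension on an annulus $B_3(0)\setminus B_\rho(0)$ together with the trace estimate $\hat u_n^1-u^1\to 0$ in $H^{1/2,2}(\partial B_\rho(0))$, tests $I'_\infty(\hat u_n^1,\hat v_n^1)\to 0$ against the truncated differences, and deduces that $\|\nabla(\hat u_n^1-u^1)\|_{L^2(B_\rho(0))}\to 0$ (likewise for $v$), which forces $(u^1,v^1)\neq(0,0)$ in view of the normalization. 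None of this can be replaced by a one-line invocation of concentration-compactness.

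Two secondary issues. First, your intermediate identity $I(u_n,v_n)=I(u^0,v^0)+I_\infty(u_n^1,v_n^1)+o_n(1)$ is wrong as written: it should involve $I_{\lambda,\infty}(u,v)=I_\infty(u,v)+\tfrac{\lambda_1}{2}\int u^2+\tfrac{\lambda_2}{2}\int v^2$, and the corresponding norm identity at this stage reads $\|(u_n^1,v_n^1)\|_H^2$, not $\|\cdot\|_{H_0}^2$; weak convergence of $(u_n^1,v_n^1)$ to zero in $H$ does not kill the $\lambda_j$-mass, which only disappears in the final assembly because the bubbles have $\sigma_n^k\to 0$. Second, your claim that ``a non-vanishing $\sigma_n$ would produce unbounded $\lambda_j$-mass'' is false: the $L^2$-mass of the rescaled bubble scales as $\sigma_n^2\|u^k\|_{L^2}^2$, which is bounded for bounded $\sigma_n$. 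The paper in fact first proves $\{\sigma_n\}$ is bounded via a grid estimate on the $L^{2^*}$-mass, and then rules out $\sigma_n\to\sigma^*>0$ by showing the translated (unrescaled) sequence would converge to a solution of \eqref{S3}, forced trivial by Lemma \ref{lm2.5}(a), contradicting the positive local $L^2$-mass inherited from $(u^1,v^1)\neq(0,0)$. Your instinct to use Lemma \ref{lm2.5} is correct, but the chain of reasoning leading up to it is not.
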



\begin{proof}
$(a)$\,Let us first consider the case $\lambda_1, \lambda_2>0$,  and now $H=H^{1}(\R^N)\times H^{1}(\R^N)$.  Since $\{(u_n,v_n)\}\subset H$ is a sequence of Palais-Smale sequence  for the functional $I$, then by computation we prove that $\{(u_n,v_n)\}$ is bounded in $H$. Without loss of generality, we may suppose that $(u_n,v_n) \rightharpoonup (u^0,v^0)$ in $H$,  $(u_n,v_n)\to (u^0,v^0)$ a.e. in $\R^N\times\R^N$ and $(u_n,v_n)\to (u^0,v^0)$ in $L_{loc}^{2}(\R^N) \times L_{loc}^{2}(\R^N) $, where $(u^0,v^0)$ is a pair of solution to system \eqref{S-4}.

Let $(u_n^{1},v_{n}^{1})=(u_n,v_n)-(u^0,v^0)$, then
\begin{equation}\label{cgl-eq3.13}
(u_n^1,v_n^1) \rightharpoonup (0,0)\ \text{in} \ H,  \quad  (u_{n}^{1},v_{n}^{1})\to (0,0) \ \ a.e. \ \ \text{in} \  \R^N\times\R^N \ \ \text{and in} \ \ L_{loc}^{2}(\R^N) \times L_{loc}^{2}(\R^N).
\end{equation}
It  follows by Lemma \ref{lm3.2} and the classical Br\'{e}zis-Lieb lemma \cite{WM} that
\begin{equation*}
I_{\lambda,\infty}(u_{n}^{1},v_{n}^{1})=I(u_n,v_n)-I(u^0,v^0)+o_{n}(1)
\end{equation*}
and
\begin{equation*}
 I'_{\lambda,\infty}(u_{n}^{1},v_{n}^{1})=I'(u_n,v_n)-I'(u^0,v^0)+o_{n}(1)=o_{n}(1),
\end{equation*}
where
\begin{equation*}
  I_{\lambda,\infty}(u,v):=I_{\infty}(u,v)+\frac{\lambda_1}{2}\int_{\R^N}u^{2}dx+\frac{\lambda_2}{2}\int_{\R^N}v^2dx.
\end{equation*}
Therefore, $\{(u_n^{1},v_{n}^{1})\}$ is a Palais-Smale sequence for $I_{\lambda,\infty}$.

If $(u_n^{1},v_{n}^{1})\to (0,0)$ in $H$, then we have done. If $(u_n^{1},v_{n}^{1})\rightharpoonup (0,0)$, but $(u_n^{1},v_{n}^{1})\nrightarrow (0,0)$ in $H$.
Then there exists a positive constant $\tilde{c}>0$ such that
\begin{equation*}
  \|(u_{n}^{1},v_{n}^{1})\|_{H}^2\geq \tilde{c}>0,
\end{equation*}
that is
\begin{equation*}
  \int_{\R^N}\big(|\nabla u_n^1|^{2}+\lambda_1  |u_n^1|^2\big)dx+\int_{\R^N}\big(|\nabla v_n^1|^{2}+\lambda_2  |v_{n}^{1}|^{2}\big)dx\geq \tilde{c}.
\end{equation*}
As we known that $\{(u_n^{1},v_{n}^{1})\}$ is a Palais-Smale sequence for $I_{\lambda,\infty}$,  then by Hardy-Littlewood-Sobolev inequality and Young inequality,
\begin{align*}
\tilde{c}&\leq \int_{\R^N}\big(|\nabla u_n^1|^{2}+ \lambda_1  |u_n^1|^2\big)dx+\int_{\R^N}\big(|\nabla v_n^1|^{2}+{\lambda_2}  |v_{n}^{1}|^{2}\big)dx \\
&=\mu_1\int_{\R^N}(|x|^{-4}*|(u_n^1)^+|^{2})|(u_n^1)^+|^2dx+\mu_2\int_{\R^N}(|x|^{-4}*|(v_n^1)^+|^{2})|(v_n^1)^+|^2dx\\
&\quad+2\beta\int_{\R^N}(|x|^{-4}*|(v_n^1)^+|^{2})|(u_n^1)^+|^2dx+o_{n}(1) \\
& \leq C \Big[\Big(\int_{\R^N}|(u_{n}^{1})^+|^{2^{*}}dx\Big)^{\frac{4}{2^{*}}}+\Big(\int_{\R^N}|(v_n^1)^+|^{2^*}dx\Big)^{\frac{4}{2^*}}\Big]+o_n(1),
\end{align*}
which implies that there exists  a positive constant $\hat{c}>0$ such that
\begin{equation}
  \int_{\R^N}|(u_{n}^{1})^+|^{2^*}dx+ \int_{\R^N}|(v_n^1)^+|^{2^*} dx\geq \hat{c}.
\end{equation}
We  assume without loss of generality that $\|(v_n^1)^+\|_{L^{2^*}}\geq \frac{1}{2}\hat{c}$ and set
$$
t_{n}=\frac{\|(u_n^1)^+\|_{L^{2^*}}^2}{\|(v_n^1)^+\|_{L^{2^*}}^2}\geq 0,
$$
then by the Sobolev inequality and Hardy-Littlewood-Sobolev inequality, we get
\begin{align*}
&\quad\mathcal{S}(t_n+1)\|(v_n^1)^+\|_{L^{2^*}}^2\\
&=\mathcal{S}(\|(u_n^1)^+\|_{L^{2^*}}^2+\|(v_n^1)^+\|_{L^{2^*}}^2)\\
&\leq \|(u_{n}^1, v_{n}^1)\|_{H}^2\\
&=\mu_1\int_{\R^N}(|x|^{-4}*|(u_n^1)^+|^{2})|(u_n^1)^+|^2dx+\mu_2\int_{\R^N}(|x|^{-4}*|(v_n^1)^+|^{2})|(v_n^1)^+|^2dx\\
&\quad+2\beta\int_{\R^N}(|x|^{-4}*|(v_n^1)^+|^{2})|(u_n^1)^+|^2dx+o_{n}(1)\\
&\leq C(N,4)(\mu_1 \|(u_n^1)^+\|_{L^{2^*}}^4+\mu_2 \|(v_n^1)^+\|_{L^{2^*}}^4+2\beta\|(u_n^1)^+\|_{L^{2^*}}^2\|(v_n^1)^+\|_{L^{2^*}}^2)+o_n(1)\\
&=C(N,4)(\mu_1 t_n^2+2\beta t_n+\mu_2)\|v_n^+\|_{L^{2^*}}^4+o_n(1),
\end{align*}
which leads to
$$
\|v_n^+\|_{L^{2^*}}^2\geq \frac{\mathcal{S}(t_n+1)-o_n(1)}{C(N,4)(\mu_1 t_n^2+2\beta t_n+\mu_2)}.
$$
A direct computation shows that
$$
\inf_{t\ge 0}\frac{(1+t)^2}{\mu_1 t^2+2\beta t+\mu_2}=k_1+k_2
$$
(see  \cite[Lemma 2.3]{LL}), then
\begin{align}\label{li-a-eq3.8}
\int_{\R^N}\big(|\nabla{u}_n^1|^{2}+|\nabla{v}_n^1|^{2}\big)dx&\geq \mathcal{S}(\|(u_n^1)^+\|_{L^{2^*}}^2+\|(v_n^1)^+\|_{L^{2^*}}^2)\nonumber\\
&=\mathcal{S}(t_n+1)\|(v_n^1)^+\|_{L^{2^*}}^2\nonumber\\
&\geq \frac{\mathcal{S}^2(t_n+1)^2}{C(N,4)(\mu_1 t_n^2+2\beta t_n+\mu_2)}-o_{n}(1)\nonumber\\
&\geq (k_1+k_2)\mathcal{S}_{HL}^2-o_{n}(1)=4c_\infty-o_{n}(1).
\end{align}
We claim that
\begin{equation}\label{li-3.8}
  d_{n}^{1}:=\max_{i\in \N}\Big(\int_{P_i} \Big( |(u_n^1)^+|^{2^*}+|(v_n^1)^+|^{2^*}\Big)dx  \Big)^{\frac{1}{2^*}}\geq C_0>0,
\end{equation}
where $P_i$ are hypercubes with disjoint interior and unitary sides with $\R^N=\sum_{i\in \N}P_i$ , $i\in \N$. Indeed, by calculation we have
\begin{align*}
0<\hat{c} &\leq  \int_{\R^N}\Big( |(u_{n}^{1})^+|^{2^*}+|(v_n^1)^+|^{2^*}\Big)dx=
\sum_{i=1}^{\infty} \int_{Q_i} \Big( |(u_{n}^{1})^+|^{2^*}+|(v_n^1)^+|^{2^*}\Big)dx \\
&\leq (d_{n}^{1})^{2^{*}-2}  \sum_{i=1}^{\infty}  \Big[\int_{P_i} \Big( |(u_{n}^{1})^+|^{2^*}+|(v_n^1)^+|^{2^*}\Big)dx \Big]^{\frac{2}{2^{*}}} \\
& \leq (d_{n}^{1})^{2^{*}-2}  \sum_{i=1}^{\infty}  \Big[\Big(  \int_{P_{i}}|(u_n^1)^+|^{2^*}dx  \Big)^{\frac{2}{2^*}}+  \Big(  \int_{P_{i}}|(v_n^1)^+|^{2^*}dx  \Big)^{\frac{2}{2^*}}\Big]               \\
& \leq C_1 (d_{n}^{1})^{2^*-2} \sum_{i=1}^{\infty}  \Big[ \int_{P_i} \big(|\nabla u_n^1|^2+\lambda_1  |u_{n}^{1}|^{2}\big)dx+\int_{P_i}\big(|\nabla v_n^1|^2+\lambda_2 |v_{n}^{1}|^{2}\big)dx \Big]\\
& = C_1 (d_{n}^{1})^{2^*-2} \Big[ \int_{\R^N} \big(|\nabla u_n^1|^2+\lambda_1  |u_{n}^{1}|^{2}\big)dx+\int_{\R^N}\big(|\nabla v_n^1|^2+\lambda_2 |v_{n}^{1}|^{2}\big)dx \Big]\\
& \leq C_{2}(d_{n}^{1})^{2^{*}-2},
\end{align*}
where $C_2$  is a positive constant independent of $i$ and the last inequality is due to the boundedness of $(u_n^1,v_{n}^{1})$ in $H$.
Thus, the claim holds true. We can also observe that
$$
\max_{i\in \N} \Big(\int_{P_i} \big(|\nabla u_n^1|^{2}+|\nabla v_n^1|^{2}\big)dx\Big)\geq \max_{i\in \N}\Big(\int_{P_i} \Big( |(u_n^1)^+|^{2^*}+|(v_n^1)^+|^{2^*}\Big)dx  \Big)^{\frac{2}{2^*}}\mathcal{S}\geq \mathcal{S}C_0^2.
$$

Define a concentration function of $(u_{n}^{1},v_{n}^{1})$ by
$$
Q_{n}(r)=\underset{y\in \R^N}{\sup} \int_{B_{r}(y)}\big(|\nabla u_n^1|^{2}+|\nabla v_n^1|^{2}\big)dx
$$
Since $Q_{n}(0)=0$, $Q_n(\infty)\geq 4c_\infty-o_{n}(1)$ and $Q_n(r)$ is continuous in $r$, then there exists $y_{n}^1\in\R^N$, $\sigma_n>0$ such that
\begin{equation}\label{Guo-y}
Q_{n}(\sigma_n)=\int_{B_{\sigma_n}(y_n^1)}\big(|\nabla u_n^1|^{2}+|\nabla v_n^1|^{2}\big)dx=\delta<\min\Big\{\frac{2c_\infty}{L},\frac{\mathcal{S}C_0^2}{2}\Big\}
\end{equation}
where $L$ is the least number of balls with radius $1$ covering a ball of radius $2$, and $\delta>0$ is independent of $n$.   We see that  $\sigma_{n}$ is bounded, otherwise,  for large $n$,
$$
Q_n(\sigma_n)\geq \max_{i\in \N} \Big(\int_{P_i} \big(|\nabla u_n^1|^{2}+|\nabla v_n^1|^{2}\big)dx\Big)\geq \mathcal{S}C_0^2>\delta,
$$
which leads to a contradiction.

We set
\begin{equation}\label{cgl-eq3.18}
(\hat{u}_{n}^1,\hat{v}_{n}^1):=\sigma_{n}^{\frac{N-2}{2}}(u_n^1(\sigma_n x+y_n^1), v_n^1(\sigma_n x+y_n^1)),
\end{equation}
then
$$
\int_{\R^N}\big(|\nabla\hat{u}_n^1|^{2}+|\nabla\hat{v}_n^1|^{2}\big)dx=\int_{\R^N}\big(|\nabla u_n^1|^{2}+|\nabla v_n^1|^{2}\big)dx<\infty
$$
and
\begin{equation}\label{114-3.10}
\int_{B_{1}(0)}\big(|\nabla\hat{u}_n^1|^{2}+|\nabla\hat{v}_n^1|^{2}\big)dx=\delta.
\end{equation}
Hence, there exists $(u^1, v^1)\in H_0$ such that $(\hat{u}_{n}^1,\hat{v}_{n}^1)\rightharpoonup (u^1, v^1)$ in $H_0$ and $(\hat{u}_{n}^1,\hat{v}_{n}^1)\to (u^1, v^1)$ a.e. on $\R^N\times\R^N$.

We claim that $(u^1, v^1)\neq (0,0)$ is a  solution of system \eqref{S2}. Indeed,  arguing as in \cite{S} (see also  \cite[Lemma 3.6]{PPW}, \cite[Theorem 3.1]{GLLM}), we can find $\rho\in [1,2]$ such that
$\hat{u}_{n}^{1}-u^1\to 0$ and $\hat{v}_{n}^{1}-v^1\to 0$ in $H^{1/2,2}(\partial B_\rho(0))$.
Then, the solutions $\phi_{1,n}$, $\phi_{2,n}$ of
\begin{equation*}
\begin{cases}
-\Delta \phi=0,\quad x\in B_{3}(0)\backslash B_{\rho}(0),\\
\phi|_{\partial B_{\rho}(0)}=\hat{u}_n^1-u^1, \phi|_{\partial B_{3}(0)}=0,
\end{cases}
\end{equation*}
and
\begin{equation*}
\begin{cases}
-\Delta \phi=0,\quad x\in B_{3}(0)\backslash B_{\rho}(0),\\
\phi|_{\partial B_{\rho}(0)}=\hat{v}_n^1-v^1, \phi|_{\partial B_{3}(0)}=0,
\end{cases}
\end{equation*}
respectively, satisfying
\begin{equation}\label{li-3.10}
\phi_{1,n}\to 0, \quad \phi_{2,n}\to 0\quad \text{in}\ H^{1}( B_{3}(0)\backslash B_{\rho}(0)).
\end{equation}
Let
\begin{equation}\label{li-3.11}
\varphi_{1,n}=
\begin{cases}
\hat{u}_n^1-u^1,\quad x\in B_{\rho}(0),\\
\phi_{1,n}, \quad x\in B_{3}(0)\backslash B_{\rho}(0), \\
0, \quad x\in\R^N \backslash B_{3}(0)
\end{cases}
\end{equation}
and
\begin{equation}\label{li-3.12}
\varphi_{2,n}=
\begin{cases}
\hat{v}_n^1-v^1,\quad x\in B_{\rho}(0),\\
\phi_{2,n},\quad x\in B_{3}(0)\backslash B_{\rho}(0),\\
0, \quad x\in\R^N \backslash B_{3}(0),
\end{cases}
\end{equation}
then
$$
\|\varphi_{j,n}\|_{L^2(\R^N)}\to 0, \quad {as}\ n\to \infty,\ j=1,2.
$$
Let
$$
\hat{\varphi}_{j,n}=\sigma_n^{-\frac{N-2}{2}}\varphi_{j,n}(\frac{x}{\sigma_n}),\quad j=1,2.
$$
By \eqref{li-3.10}-\eqref{li-3.12}, we get
\begin{equation}\label{eq-114-3.14}
\|\hat{\varphi}_{1,n}\|_{\lambda_1}^2=\|\varphi_{1,n}\|_{D^{1,2}}^2+\lambda_1\sigma_n^2\|\varphi_{1,n}\|_{L^2(\R^N)}^2=\|\hat{u}_n^1-u^1\|_{D^{1,2}(B_{\rho}(0))}^2+o_n(1).
\end{equation}
Similarly,
\begin{equation}\label{eq-114-3.15}
\|\hat{\varphi}_{2,n}\|_{\lambda_2}^2=\|\varphi_{2,n}\|_{D^{1,2}}^2+\lambda_2\sigma_n^2\|\varphi_{2,n}\|_{L^2(\R^N)}^2=\|\hat{v}_n^1-v^1\|_{D^{1,2}(B_{\rho}(0))}^2+o_n(1).
\end{equation}
Note that  $\{(u_{n}^{1}, v_{n}^{1})\}$  is a sequence of Palais-Smale  sequence  for $I_{\lambda,\infty}$, then
$$
\langle I^{\prime}_{\infty}(\hat{u}_{n}^{1}, \hat{v}_{n}^{1}),(\varphi_{1,n},\varphi_{2,n})\rangle=\langle I^{\prime}_{\lambda,\infty}(u_{n}^{1}, v_{n}^{1}),(\hat{\varphi}_{1,n},\hat{\varphi}_{2,n})\rangle+o_{n}(1)=o_{n}(1).
$$
Furthermore,  we get
\begin{align}\label{Guo-1}
 o_{n}(1)&= \langle I'_{\infty}(\hat{u}_{n}^{1},\hat{v}_{n}^{1} ), (\varphi_{1,n}, \varphi_{2,n})\rangle  \nonumber \\
 &= \int_{B_{\rho}(0)} \nabla  \hat{u}_{n}^{1} \nabla(\hat{u}_{n}^{1}-u^1)dx+ \int_{B_{\rho}(0)} \nabla  \hat{v}_{n}^{1} \nabla(\hat{v}_{n}^{1}-v^1)dx  \nonumber  \\
 & \ \ - \mu_{1} \int_{B_{\rho}(0)} \int_{\R^N} \frac{|(\hat{u}_{n}^{1})^+(x)|^2(\hat{u}_{n}^{1})^{+}(y)(\hat{u}_{n}^{1}-u^1)(y)}{|x-y|^{4}}dxdy \\
 & \ \ - \mu_{2} \int_{B_{\rho}(0)} \int_{\R^N} \frac{ |(\hat{v}_{n}^{1})^+(x)|^2 (\hat{v}_{n}^{1})^{+}(y)(\hat{v}_{n}^{1}-v^1)(y)}{|x-y|^{4}}dxdy \nonumber \\
 & \ \ - \beta \int_{B_{\rho}(0)} \int_{\R^N} \frac{(\hat{u}_{n}^{1})^{+}(x)(\hat{u}_{n}^{1}-u^1)(x) |(\hat{v}_{n}^{1})^+(y)|^2}{|x-y|^{4}}dx dy \nonumber \\
 & \ \ - \beta \int_{B_{\rho}(0)}\int_{\R^N} \frac{ |(\hat{u}_{n}^{1})^+(x)|^2 (\hat{v}_{n}^{1})^{+}(y)(\hat{v}_{n}^{1}-v^1)(y)}{|x-y|^{4}}dx dy +o_{n}(1).  \nonumber
\end{align}
Since $(\hat{u}_{n}^1,\hat{v}_{n}^1)$ is bounded in $H_0$, then $\{\hat{u}_{n}^{1}\} $ is bounded in $L^{\frac{2N}{N-2}}$ and $\hat{u}_{n}^{1} \to u^{1}$ a.e. in $\R^N$, we have $|(\hat{u}_{n}^{1})^+|^2 \rightharpoonup |(u^1)^+|^2$ in $L^{\frac{N}{N-2}}(\R^N)$. By the Hardy-Littlewood-Sobolev inequality, the Riesz potential defines a linear continuous map from $L^{\frac{N}{N-2}}$ to $L^{\frac{N}{2}}(\R^N)$ and then
\begin{equation*}
  \int_{\R^N}\frac{|(\hat{u}_{n}^{1})^{+}(x)|^{2}}{|x-y|^{4}}dx \rightharpoonup \int_{\R^N}\frac{|(u^{1})^+|^2}{|x-y|^{4}}dx \ \ \text{in} \ \ L^{\frac{N}{2}}(\R^N).
\end{equation*}
Combining this with  $(\hat{u}_{n}^{1})^+ \rightharpoonup (u^1)^{+}$ in $L^{\frac{2N}{N-2}}$, we then get
$$
(\hat{u}_{n}^{1})^+ (y)\int_{\R^N} \frac{|(\hat{u}_{n}^{1})^+(x)|^{2}}{|x-y|^{4}}dx\rightharpoonup
(u^1)^{+}(y)\int_{\R^N}\frac{|(u^1)^{+}(x)|^2}{|x-y|^{4}}dx \ \ \text{in} \ \ L^{\frac{2N}{N+2}}(\R^N),
$$
which implies that
\begin{align*}
  \lim_{n \to \infty}\int_{B_{\rho}(0)} \int_{\R^N} \frac{|(\hat{u}_{n}^{1})^+(x)|^2(\hat{u}_{n}^{1})^{+}(y)u^1(y)}{|x-y|^{4}}dxdy
  &=\int_{B_{\rho}(0)} \int_{\R^N} \frac{|(u^{1})^+(x)|^2(u^{1})^{+}(y)u^1(y)}{|x-y|^{4}}dxdy \\
  &=\int_{B_{\rho}(0)} \int_{\R^N} \frac{|(u^{1})^+(x)|^2|(u^{1})^{+}(y)|^2}{|x-y|^{4}}dxdy.
\end{align*}
Therefore,
\begin{align*}
  & \quad \int_{B_{\rho}(0)} \int_{\R^N} \frac{|(\hat{u}_{n}^{1})^+(x)|^2(\hat{u}_{n}^{1})^{+}(y)(\hat{u}_{n}^{1}-u^1)(y)}{|x-y|^{4}}dxdy \\
  &= \int_{B_{\rho}(0)} \int_{\R^N} \frac{|(\hat{u}_{n}^{1})^+(x)|^2|(\hat{u}_{n}^{1})^+(y)|^2}{|x-y|^{4}}dxdy
  -  \int_{B_{\rho}(0)} \int_{\R^N} \frac{|(u^{1})^+(x)|^2|(u^{1})^+(y)|^2}{|x-y|^{4}}dxdy+o_{n}(1) \\
  &= \int_{B_{\rho}(0)} \int_{\R^N} \frac{|(\hat{u}_{n}^{1}-u^1)^+(x)|^2|(\hat{u}_{n}^{1}-u^1)^+(y)|^2}{|x-y|^{4}}dxdy+o_{n}(1).
\end{align*}
Similarly, we can also prove the following properties hold:
\begin{equation*}
   \int_{B_{\rho}(0)} \int_{\R^N} \frac{ |(\hat{v}_{n}^{1})^+(x)|^2 (\hat{v}_{n}^{1})^{+}(y)(\hat{v}_{n}^{1}-v^1)(y)}{|x-y|^{4}}dxdy
   =\int_{B_{\rho}(0)} \int_{\R^N} \frac{ |(\hat{v}_{n}^{1}-v^1)^+(x)|^2 |(\hat{v}_{n}^{1}-v^1)^+(y)|^2}{|x-y|^{4}}dxdy+o_{n}(1),
\end{equation*}
\begin{equation*}
   \int_{B_{\rho}(0)} \int_{\R^N} \frac{ |(\hat{v}_{n}^{1})^+(x)|^2 (\hat{u}_{n}^{1})^{+}(y)(\hat{u}_{n}^{1}-u^1)(y)}{|x-y|^{4}}dxdy
   =\int_{B_{\rho}(0)} \int_{\R^N} \frac{ |(\hat{v}_{n}^{1}-v^1)^+(x)|^2 |(\hat{u}_{n}^{1}-u^1)^+(y)|^2}{|x-y|^{4}}dxdy+o_{n}(1),
\end{equation*}
and
\begin{equation*}
   \int_{B_{\rho}(0)} \int_{\R^N} \frac{ |(\hat{u}_{n}^{1})^+(x)|^2 (\hat{v}_{n}^{1})^{+}(y)(\hat{v}_{n}^{1}-v^1)(y)}{|x-y|^{4}}dxdy
   =\int_{B_{\rho}(0)} \int_{\R^N} \frac{ |(\hat{u}_{n}^{1}-u^1)^+(x)|^2 |(\hat{v}_{n}^{1}-v^1)^+(y)|^2}{|x-y|^{4}}dxdy+o_{n}(1).
\end{equation*}
Inserting the equalities above into \eqref{Guo-1}, we then get
\begin{align*}
o_{n}(1)&= \int_{B_{\rho}(0)} \nabla  \hat{u}_{n}^{1} \nabla(\hat{u}_{n}^{1}-u^1)dx + \int_{B_{\rho}(0)} \nabla  \hat{v}_{n}^{1} \nabla(\hat{v}_{n}^{1}-v^1)dx  \nonumber  \\
 & \ \ - \mu_{1} \int_{B_{\rho}(0)} \int_{\R^N} \frac{|(\hat{u}_{n}^{1}-u^1)^+(x)|^2|(\hat{u}_{n}^{1}-u^1)^+(y)|^2}{|x-y|^{4}}dxdy\\
 & \ \ - \mu_{2} \int_{B_{\rho}(0)} \int_{\R^N} \frac{ |(\hat{v}_{n}^{1}-v^1)^+(x)|^2 |(\hat{v}_{n}^{1}-v^1)^+(y)|^2}{|x-y|^{4}}dxdy \nonumber\\
 & \ \ - \beta \int_{B_{\rho}(0)} \int_{\R^N} \frac{ |(\hat{v}_{n}^{1}-v^1)^+(x)|^2 |(\hat{u}_{n}^{1}-u^1)^+(y)|^2}{|x-y|^{4}}dxdy \nonumber  \\
 & \ \ - \beta \int_{B_{\rho}(0)} \int_{\R^N} \frac{ |(\hat{u}_{n}^{1}-u^1)^+(x)|^2 |(\hat{v}_{n}^{1}-v^1)^+(y)|^2}{|x-y|^{4}}dxdy+o_{n}(1).  \nonumber
\end{align*}
By \eqref{li-3.10} and the scale invariance again, we have
\begin{align}\label{guo-3.16}
o_{n}(1)&= \int_{\R^N} | \nabla \varphi_{1,n}|^2 dx+ \int_{\R^N} |\nabla \varphi_{2,n}|^{2}dx
 - \mu_{1} \int_{\R^N} \int_{\R^N} \frac{|\varphi_{1,n}^+(x)|^2|\varphi_{1,n}^+(y)|^2}{|x-y|^{4}}dxdy  \nonumber \\
 & - \mu_{2} \int_{\R^N} \int_{\R^N} \frac{|\varphi_{2,n}^+(x)|^2|\varphi_{2,n}^+(y)|^2}{|x-y|^{4}}dxdy
 - 2\beta \int_{\R^N}\int_{\R^N} \frac{ |\varphi_{1,n}^{+}(x)|^{2} |\varphi_{2,n}^{+}(y)|^2}{|x-y|^{4}} dxdy \nonumber  \\
  &= \int_{\R^N} | \nabla \hat{\varphi}_{1,n}|^2 dx+ \int_{\R^N} |\nabla \hat{ \varphi}_{2,n}|^{2}dx
 - \mu_{1} \int_{\R^N} \int_{\R^N} \frac{|\hat{\varphi}_{1,n}^+(x)|^2|\hat{\varphi}_{1,n}^+(y)|^2}{|x-y|^{4}}dxdy  \\
 & - \mu_{2} \int_{\R^N} \int_{\R^N} \frac{|\hat{\varphi}_{2,n}^+(x)|^2|\hat{\varphi}_{2,n}^+(y)|^2}{|x-y|^{4}}dxdy
 - 2\beta \int_{\R^N}\int_{\R^N} \frac{ |\hat{\varphi}_{1,n}^{+}(x)|^{2} |\hat{\varphi}_{2,n}^{+}(y)|^2}{|x-y|^{4}} dxdy. \nonumber
\end{align}
If $\big(\hat{\varphi}_{1,n}^{+}, \hat{\varphi}_{2,n}^{+}\big)\neq (0,0)$, we define $t_n>0$  by
\begin{equation*}
  t_{n}^{2}=\frac{\|(\hat{\varphi}_{1,n},\hat{\varphi}_{2,n})\|_{H_0}^2}{ \int_{\R^N}\int_{\R^N}\frac{\mu_1|\hat{\varphi}_{1,n}^+(x)|^2|\hat{\varphi}_{1,n}^+(y)|^2+
  \mu_2|\hat{\varphi}_{2,n}^+(x)|^2|\hat{\varphi}_{2,n}^+(y)|^2+2\beta|\hat{\varphi}_{1,n}^{+}(x)|^{2} |\hat{\varphi}_{2,n}^{+}(y)|^2 }{|x-y|^{4}}   dxdy}.
\end{equation*}
Then $(t_n\hat{\varphi}_{1,n}, t_n\hat{\varphi}_{2,n} )\in \mathcal{N}_{\infty}$ and moreover,
\begin{align*}
c_{\infty}&\leq I_{\infty}(t_n\hat{\varphi}_{1,n}, t_n\hat{\varphi}_{2,n} )\\
&=\frac{1}{4}t_{n}^{2}\int_{\R^N}(|\nabla \hat{\varphi}_{1,n}|^{2}+|\nabla \hat{\varphi}_{2,n}|^{2})dx    \nonumber \\
&\leq \frac{1}{4}\frac{\|(\hat{\varphi}_{1,n},\hat{\varphi}_{2,n})\|_{H_0}^4}{ \int_{\R^N}\int_{\R^N}\frac{\mu_1|\hat{\varphi}_{1,n}^+(x)|^2|\hat{\varphi}_{1,n}^+(y)|^2+
  \mu_2|\hat{\varphi}_{2,n}^+(x)|^2|\hat{\varphi}_{2,n}^+(y)|^2+2\beta|\hat{\varphi}_{1,n}^{+}(x)|^{2} |\hat{\varphi}_{2,n}^{+}(y)|^2 }{|x-y|^{4}}   dxdy},
\end{align*}
which implies that
\begin{align*}
  & \quad \int_{\R^N}\int_{\R^N}\frac{\mu_1|\hat{\varphi}_{1,n}^+(x)|^2|\hat{\varphi}_{1,n}^+(y)|^2
 + \mu_2|\hat{\varphi}_{2,n}^+(x)|^2|\hat{\varphi}_{2,n}^+(y)|^2+2\beta|\hat{\varphi}_{1,n}^{+}(x)|^{2} |\hat{\varphi}_{2,n}^{+}(y)|^2}{|x-y|^{4}}   dxdy \\
  & \leq \frac{1}{4c_{\infty}} \big(\int_{\R^N}(|\nabla \hat{\varphi}_{1,n}|^{2}+|\nabla \hat{\varphi}_{2,n}|^{2})dx\big)^{2}.
\end{align*}
Obviously, the above inequality is also true for $\big(\hat{\varphi}_{1,n}^{+}, \hat{\varphi}_{2,n}^{+}\big)=(0,0)$.
On the other hand, by calculation we have
\begin{align}\label{guo-3.15}
 & \quad \int_{\R^N}(|\nabla \hat{\varphi}_{1,n}|^{2}+ |\nabla \hat{\varphi}_{2,n}|^{2})dx   \nonumber \\
 & = \int_{\R^N}(|\nabla \varphi_{1,n}|^{2}+ |\nabla \varphi_{2,n}|^{2})dx  \nonumber \\
 &= \int_{B_{\rho}(0)}|\nabla (\hat{u}_{n}^{1}-u^1)|^{2}dx+ \int_{B_{\rho}(0)}|\nabla (\hat{v}_{n}^{1}-v^1)|^{2}dx +o_{n}(1)  \nonumber \\
  &= \int_{B_{\rho}(0)}|\nabla \hat{u}_{n}^{1}|^{2}dx-\int_{B_{\rho}(0)}|\nabla u^1|^{2}dx+ \int_{B_{\rho}(0)}|\nabla \hat{v}_{n}^{1}|^{2}dx-\int_{B_{\rho}(0)}|\nabla v^1|^{2}dx+o_{n}(1)  \nonumber \\
  & \leq \int_{B_{\rho}(0)}|\nabla \hat{u}_{n}^{1}|^{2}dx+  \int_{B_{\rho}(0)}|\nabla \hat{v}_{n}^{1}|^{2}dx  +o_{n}(1).
\end{align}
Then by  \eqref{guo-3.16}-\eqref{guo-3.15}, we get
\begin{align*}
  o_{n}(1) \geq \Big(1-\frac{1}{4c_{\infty}} \int_{B_{\rho}(0)}(|\nabla \hat{u}_{n}^1|^{2}+|\nabla \hat{v}_{n}^1|^{2})dx\Big) \cdot \int_{\R^N}(|\nabla \hat{\varphi}_{1,n}|^{2}+|\nabla \hat{\varphi}_{2,n}|^{2})dx.
\end{align*}
Note that from \eqref{114-3.10},
\begin{equation*}
  \int_{B_{\rho}(0)}(|\nabla \hat{u}_{n}^1|^{2}+|\nabla \hat{v}_{n}^1|^{2})dx\leq L \int_{B_{1}(0)}(|\nabla \hat{u}_{n}^1|^{2}+|\nabla \hat{v}_{n}^1|^{2})dx<2c_{\infty}.
\end{equation*}
Thus for $n\to \infty$,
\begin{equation*}
  \int_{\R^N}(|\nabla \hat{\varphi}_{1,n}|^{2}+|\nabla \hat{\varphi}_{2,n}|^{2})dx \to 0.
\end{equation*}
 From \eqref{eq-114-3.14} and \eqref{eq-114-3.15}, we then get
\begin{equation*}
  \int_{B_{\rho}(0)}|\nabla (\hat{u}_{n}^{1}- u^1)^{2}|dx \to 0,  \ \ \ \
  \int_{B_{\rho}(0)}|\nabla (\hat{v}_{n}^{1}- v^1)^{2}|dx \to 0.
\end{equation*}
Thus we assert that $(u^{1},v^{1})\neq (0,0)$ due to \eqref{114-3.10}.

Furthermore, we point out that $\sigma_{n}\to 0$ as $n\to \infty$. Indeed, if not, we may assume that $\sigma_n\to \sigma^*>0$ as $n\to\infty$.
Let
\begin{equation*}\label{li-eq3.9}
(\tilde{u}_{n}^{1}(x), \tilde{v}_{n}^{1}(x)):=\big( u_{n}^{1}(x+y_n^1),  \,\, v_{n}^{1}(x+y_n^1) \big),
\end{equation*}
where $y_n^1$ is defined in \eqref{Guo-y}. By \eqref{cgl-eq3.13}, \eqref{cgl-eq3.18} and $(u^{1},v^{1})\neq (0,0)$, we get $|y_n^1|\to\infty$ as $n\to\infty$. Then, $\{ (\tilde{u}_{n}^{1}, \tilde{v}_{n}^{1}) \}$ is a Palais-Smale sequence of $I_{\lambda,\infty}$ and also bounded in $H$. We may assume that $ (\tilde{u}_{n}^{1}(x), \tilde{v}_{n}^{1}(x)) \rightharpoonup ( \tilde{u}^{1}, \tilde{v}^{1} )$ in $H$. Moreover, we can find that $( \tilde{u}^{1}, \tilde{v}^{1} )$ solves the system \eqref{S3}. Then by  Lemma \ref{lm2.5}$(a)$, we assert that $(\tilde{u}^{1},\tilde{v}^{1})=(0,0)$, which implies that $(\tilde{u}_{n}^{1},  \tilde{v}_{n}^{1})\rightharpoonup (0,0)$ in $H$. Thus, $\tilde{u}_{n}^{1}\to 0,  \tilde{v}_{n}^{1}\to 0$  in $L_{loc}^2(\R^N)$.
On the other hand, using again $(\hat{u}_{n}^1,\hat{v}_{n}^1)\rightharpoonup (u^1, v^1)\neq (0,0)$ in $H_0$, there exists a $r_0>0$ such that
$$
\int_{B_{r_0(0)}}(|u^1|^2+|v^1|^2)dx>0.
$$
Moreover,
\begin{align*}
\lim_{n\to\infty}\int_{B_{r_0\sigma^*}(0)}(|\tilde{u}_n^1|^2+|\tilde{v}_n^1|^2)dx&=\lim_{n\to\infty}\int_{B_{r_0\sigma^*}(0)}\sigma_n^{-(N-2)}(|\hat{u}_n^1(\sigma_n^{-1}x)|^2+|\hat{v}_n^1(\sigma_n^{-1}x)|^2)dx\\
&=\lim_{n\to\infty}\int_{B_{r_0\sigma^*/\sigma_n}(0)}\sigma_n^{2}(|\hat{u}_n^1(y)|^2+|\hat{v}_n^1(y)|^2)dy\\
&=\lim_{n\to\infty}\int_{B_{r_0}(0)}(\sigma^*)^{2}(|{u}^1(y)|^2+|{v}^1(y)|^2)dy>0.
\end{align*}
This contradicts with the fact that $\tilde{u}_n^1\to 0$, $\tilde{v}_n^1\to 0$ in $L^2_{loc}(\R^N)$.

 In what follows,  we want to prove that  $(u^{1},v^{1})$ is a nonzero solution of  system \eqref{S2}. For arbitrary $\varphi_1,\varphi_2 \in C_{0}^{\infty}(\R^N)$, then
$$
\langle I^{\prime}_{\infty}(\hat{u}_{n}^{1}, \hat{v}_{n}^{1}),(\varphi_{1},\varphi_{2})\rangle=\langle I^{\prime}_{\lambda,\infty}(\tilde{u}_{n}^{1}, \tilde{v}_{n}^{1}),(\hat{\varphi}_{1},\hat{\varphi}_{2})\rangle+o_{n}(1)=o_{n}(1),
$$
where $\hat{\varphi}_{j}=\sigma_n^{-\frac{N-2}{2}}\varphi_{j}(\frac{x}{\sigma_n}), j=1,2$. Then, $(u^1, v^1)$  solves the system \eqref{S2}.

Note that the bubble $\sigma_{n}^{-\frac{N-2}{2}}\big(u^{1}(\frac{x-y_n^1}{\sigma_n}), v^{1}(\frac{x-y_n^1}{\sigma_n})\big)$ does not belong to $H$. Technically, we need to modify it so that we can eliminate this bubble from the original sequence $(u_n^{1},v_{n}^{1})$ to obtain a new Palais-Smale sequence  of  $I_{\lambda,\infty}$  at a lower level.  We define
\begin{equation*}
\big(u_{n}^{2}(x), v_{n}^{2}(x)\big):=(u_{n}^{1}(x),v_{n}^{1}(x) )-\sigma_{n}^{-\frac{N-2}{2}} \Big(\varphi(\frac{x-y_n^1}{\sigma_{n}^{\frac{1}{2}}})u^{1}(\frac{x-y_n^1}{\sigma_n}),  \varphi(\frac{x-y_n^1}{\sigma_{n}^{\frac{1}{2}}})v^{1}(\frac{x-y_n^1}{\sigma_n})  \Big),
\end{equation*}
where $\varphi(x)$ is a  cut-off function satisfying $\varphi(x)=1$ if $|x|\in (0,1]$, $\varphi(x)=0$ if $|x|>2$.

In the following, we want to prove that $\{(u_{n}^{2}(x), v_{n}^{2}(x))\}$ is a sequence of Palais-Smale  sequence for $I_{\lambda,\infty}$ in $H$ and $(u_{n}^{2}(x), v_{n}^{2}(x))\rightharpoonup (0,0)$ in $H$. In fact,  in order to prove
$$
(u_{n}^{2}(x), v_{n}^{2}(x))\rightharpoonup (0,0)\quad \text{in}\ H,
$$
it is sufficient to prove that
 \begin{equation}\label{e-3.10}
\sigma_{n}^{-\frac{N-2}{2}}\Big(\varphi(\frac{x-y_n^1}{\sigma_{n}^{\frac{1}{2}}})u^{1}(\frac{x-y_n^1}{\sigma_n}),  \varphi(\frac{x-y_n^1}{\sigma_{n}^{\frac{1}{2}}})v^{1}(\frac{x-y_n^1}{\sigma_n}) \Big)\rightharpoonup (0,0) \ \  \text{in} \ \  H.
\end{equation}
Recalling that $\varphi$ is a cut-off functions, then by calculation we have
\begin{align}\label{e-3.11}
& \ \  \|  \big(\varphi(\frac{x-y_n^1}{\sigma_{n}^{\frac{1}{2}}})\sigma_{n}^{-\frac{N-2}{2}}u^{1}(\frac{x-y_n^1}{\sigma_n}),   \varphi(\frac{x-y_n^1}{\sigma_{n}^{\frac{1}{2}}})\sigma_{n}^{-\frac{N-2}{2}}v^{1}(\frac{x-y_n^1}{\sigma_n}) \big) \|_{L^{2}\times L^{2}}^{2}  \nonumber \\
& = \sigma_{n}^{2}\int_{\R^N} |\varphi(\sigma_{n}^{\frac{1}{2}}x)|^{2} |u^{1}(x)|^{2}dx+\sigma_{n}^{2}\int_{\R^N} |\varphi(\sigma_{n}^{\frac{1}{2}}x)|^2 |v^{1}(x)|^{2}dx  \nonumber \\
&=\sigma_{n}^{2}\int_{|x|\leq 2\sigma_{n}^{-\frac{1}{2}}} |u^{1}(x)|^{2}dx+\sigma_{n}^{2}\int_{|x|\leq 2\sigma_{n}^{-\frac{1}{2}}}  |v^{1}(x)|^{2}dx \nonumber \\
&\leq \sigma_{n}^{2} \Big( \int_{|x|\leq 2\sigma_{n}^{-\frac{1}{2}}}|u^1|^{2^*}dx\Big)^{\frac{2}{2^*}}\Big( \int_{|x|\leq 2\sigma_{n}^{-\frac{1}{2}}}1dx\Big)^{\frac{2^*-2}{2^*}}    \nonumber\\
&\quad+ \sigma_{n}^{2} \Big( \int_{|x|\leq 2\sigma_{n}^{-\frac{1}{2}}}|v^1|^{2^*}dx\Big)^{\frac{2}{2^*}}\Big( \int_{|x|\leq 2\sigma_{n}^{-\frac{1}{2}}}1dx\Big)^{\frac{2^{*}-2}{2^*}}  \nonumber \\
& \leq  C \sigma_n  \Big( \int_{|x|\leq 2\sigma_{n}^{-\frac{1}{2}}}|u^1|^{2^*}dx\Big)^{\frac{2}{2^{*}}}+C \sigma_n  \Big( \int_{|x|\leq 2\sigma_{n}^{-\frac{1}{2}}}|v^1|^{2^*}dx\Big)^{\frac{2}{2^{*}}}\nonumber \\
&\leq C \sigma_n  \Big( \int_{\R^N}|u^1|^{2^*}dx\Big)^{\frac{2}{2^*}}+C \sigma_n  \Big( \int_{\R^N}|v^1|^{2^*}dx\Big)^{\frac{2}{2^*}}\to 0, \ \ \text{as} \,\, n\to \infty,
\end{align}
where $\|(u,v)\|_{L^{p}\times L^{p}}^{p}=\int_{\R^N}|u|^{p}dx+\int_{\R^N}|v|^{p}dx$.
On the other hand, direct calculation shows that
\begin{align}\label{e-3.12}
&\quad \| \varphi(\frac{x-y_n^1}{\sigma_{n}^{\frac{1}{2}}})\sigma_{n}^{-\frac{N-2}{2}}u^{1}(\frac{x-y_n^1}{\sigma_n})-\sigma_{n}^{-\frac{N-2}{2}}u^{1}(\frac{x-y_n^1}{\sigma_n}) \|_{D^{1,2}}^2\nonumber\\
&=\| \varphi(\sigma_{n}^{\frac{1}{2}}x) u^{1}- u^1 \|_{D^{1,2}}^2 \to 0,
\end{align}
as $\sigma_n\to 0$. Since $\sigma_{n}^{-\frac{N-2}{2}}u^{1}(\frac{x-y_n^1}{\sigma_n})\rightharpoonup 0$ in $D^{1,2}(\R^N)$, then $\varphi(\frac{x-y_n^1}{\sigma_{n}^{\frac{1}{2}}})\sigma_{n}^{-\frac{N-2}{2}}u^{1}(\frac{x-y_n^1}{\sigma_n})\rightharpoonup 0$ in $D^{1,2}(\R^N)$. Similar to \eqref{e-3.12}, we can also prove that $\varphi(\frac{x-y_n^1}{\sigma_{n}^{\frac{1}{2}}})\sigma_{n}^{-\frac{N-2}{2}}v^{1}(\frac{x-y_n^1}{\sigma_n})\rightharpoonup 0$ in $D^{1,2}(\R^N)$. Therefore,
\begin{align}\label{add-e-3.13}
\sigma_{n}^{-\frac{N-2}{2}}\Big(\varphi(\frac{x-y_n^1}{\sigma_{n}^{\frac{1}{2}}})u^{1}(\frac{x-y_n^1}{\sigma_n}), \,\, \varphi(\frac{x-y_n^1}{\sigma_{n}^{\frac{1}{2}}})v^{1}(\frac{x-y_n^1}{\sigma_n})     \Big)
   \rightharpoonup (0,0) \,\, \text{in} \, \, H_{0}.
\end{align}
Thus, by \eqref{e-3.11} and \eqref{add-e-3.13}, we then prove that \eqref{e-3.10} is true. Notice that $\big(u_{n}^{1}(x), v_{n}^{1}(x)\big)\rightharpoonup (0,0) $ in $H$, then by \eqref{e-3.10},
\begin{equation*}
\big(u_{n}^{2}(x), v_{n}^{2}(x)\big)\rightharpoonup (0,0) \ \ \text{in} \ \ H.
\end{equation*}
Based on the properties \eqref{e-3.11} and \eqref{e-3.12}, together  with  Lemma \ref{lm3.2},  we have
\begin{equation}\label{e-3.14}
  I_{\lambda, \infty}(u_{n}^{2},v_{n}^{2})=I_{\lambda, \infty}(u_{n}^{1},v_{n}^{1})-I_{\infty}(u^1,v^1)+o_n(1)
\end{equation}
and
\begin{align*}
  &\quad\|(u_n,v_n)\|_{H}^2\\
  &=\|(u^0,v^0)\|_{H}^{2}+\|(u_n^1,v_{n}^{1})\|_{H}^{2}+o_{n}(1)\\
  &=\|(u^0,v^0)\|_{H}^{2}+\|(u_n^2,v_{n}^{2})\|_{H}^{2}+\|(\sigma_{n}^{-\frac{N-2}{2}}u^{1}(\frac{x-y_n^1}{\sigma_n}), \sigma_{n}^{-\frac{N-2}{2}}v^{1}(\frac{x-y_n^1}{\sigma_n})   \|_{H_0}^2+o_{n}(1).
\end{align*}
Taking test function $(\tilde{\varphi},\tilde{\psi})\in H$ with $\|(\tilde{\varphi}, \tilde{\psi})\|_{H}\leq 1$, then by Lemma \ref{lm3.2} again, we have
\begin{align*}
& \ \ \langle I'_{\lambda,\infty}(u_{n}^{2},v_{n}^{2}), (\tilde{\varphi},\tilde{\psi}) \rangle\\
&=\langle  I'_{\lambda,\infty}(u_{n}^{1},v_{n}^{1}), (\tilde{\varphi},\tilde{\psi}) \rangle  \\
&\quad- \langle  I'_{\lambda,\infty}\big(\varphi(\frac{x-y_n^1}{\sigma_{n}^{\frac{1}{2}}})\sigma_{n}^{-\frac{N-2}{2}}u^{1}(\frac{x-y_n^1}{\sigma_n}),  \psi(\frac{x-y_n^1}{\sigma_{n}^{\frac{1}{2}}})\sigma_{n}^{-\frac{N-2}{2}}v^{1}(\frac{x-y_n^1}{\sigma_n}) \big), (\tilde{\varphi},\tilde{\psi}) \rangle +o_{n}(1)  \\
&=\langle  I'_{\lambda,\infty}(u_{n}^{1},v_{n}^{1}), (\tilde{\varphi},\tilde{\psi}) \rangle - \langle I'_{\infty}(u^1,v^1), \big(\sigma_n^{\frac{N-2}{2}}\tilde{\varphi}(\sigma_nx+y_n^1), \sigma_n^{\frac{N-2}{2}}\tilde{\psi}(\sigma_nx+y_n^1)\big)     \rangle+o_{n}(1) \\
&=\langle  I'_{\lambda,\infty}(u_{n}^{1},v_{n}^{1}), (\tilde{\varphi},\tilde{\psi}) \rangle+o_{n}(1).
\end{align*}
Note that $\{(u_{n}^{1},v_{n}^{1})\}$ is a sequence of Palais-Smale sequence for $I_{\lambda,\infty}$, then
\begin{align*}
  \|  I'_{\lambda,\infty}(u_{n}^{2},v_{n}^{2}) \|_{H^{-1}}& = \sup_{(\tilde{\varphi},\tilde{\psi})\in H,\, \|(\tilde{\varphi},\tilde{\psi})\|_{H}\leq 1} \langle I'_{\lambda,\infty}(u_{n}^{2},v_{n}^{2}), (\tilde{\varphi},\tilde{\psi}) \rangle    \\
  &= \|I'_{\lambda,\infty}(u_{n}^{1},v_{n}^{1})\|_{H^{-1}}+o_{n}(1) \to 0, \ \ \text{as} \ \ n\to \infty.
\end{align*}
Thus $\{(u_{n}^{2},v_{n}^{2})\}$ is a sequence of Palais-Smale sequence for $I_{\lambda,\infty}$. Moreover, by \eqref{e-3.14} we get
\begin{align*}
I(u_n,v_n)&=I_{\lambda,\infty}(u_{n}^{1},v_{n}^{1})+I(u^0,v^0)+o_{n}(1)=I_{\lambda, \infty}(u_{n}^{2},v_{n}^{2})+I_{\infty}(u^1,v^1)+I(u^0,v^0)+o_{n}(1).
\end{align*}

If $(u_{n}^{2},v_{n}^{2}) \to (0,0)$ in $H$, then we have done. Otherwise,  we can iterate the procedure. Taking into account that at every step $\ell$, we  then prove that
\begin{align*}
  \|(u_n,v_n)\|_{H}^{2}&=\|(u^0,v^0)\|_{H}^{2}+\|(u_n^{\ell+1},v_{n}^{ \ell+1})\|_{H}^{2}\\
  &\quad+\sum_{k=1}^{ \ell}\|\Big((\sigma_{n}^{k})^{-\frac{N-2}{2}}u^k(\frac{\cdot-y_{n}^{k}}{\sigma_{n}^{k}}),(\sigma_{n}^{k})^{-\frac{N-2}{2}}v^k(\frac{\cdot-y_{n}^{k}}{\sigma_{n}^{k}})\Big)\|_{ H_0}^{2}+o_{n}(1)
\end{align*}
and
\begin{equation*}
  I(u_n,v_n)=I(u^0,v^0)+\sum_{k=1}^{\ell}I_{\infty}(u^{k},v^k)+ I_{\lambda,\infty}(u_{n}^{\ell+1},  v_{n}^{\ell+1})+o_{n}(1).
\end{equation*}
Since $I_{\infty}(u^{k},v^k)\geq \frac{1}{4}(k_1+k_2)\mathcal{S}_{HL}^{2}$,  then
\begin{equation*}
  I(u_n,v_n) \geq I(u^0,v^0)+\frac{\ell}{4}(k_1+k_2)\mathcal{S}_{HL}^{2}+o_{n}(1).
\end{equation*}
Hence, the iteration must terminate at a finite index $\ell$ such that $\|(u_{n}^{\ell+1},v_{n}^{\ell+1})\|_{H}\to 0$. So
\begin{equation*}
  \|(u_n,v_n)\|_{H}^{2}=\|(u^0,v^0)\|_{H}^{2}+\sum_{k=1}^{\ell}\|\Big((\sigma_{n}^{k})^{-\frac{N-2}{2}}u^k(\frac{x-y_{n}^{k}}{\sigma_{n}^{k}}),(\sigma_{n}^{k})^{-\frac{N-2}{2}}v^k(\frac{x-y_{n}^{k}}{\sigma_{n}^{k}})\Big)\|_{ H_0}^{2}+o_{n}(1)
\end{equation*}
and
\begin{equation*}
I(u_n,v_n)=I(u^0,v^0)+\sum_{k=1}^{\ell}I_{\infty}(u^k,v^k)+o_{n}(1).
\end{equation*}

$(b)$\,Let us now  consider the case $\lambda_1=0, \lambda_2>0$, in this case $H:= D^{1,2}(\R^N)\times H^{1}(\R^N)$. We remark that, to obtain the proof of case $(b)$, one need to adjust some arguments displayed in the proof of case $(a)$.

Repeat the  argument in the previous case, we have $(u_n,v_n) \rightharpoonup (u^0,v^0)$ in $H$, where $(u^0,v^0)$ is a pair of solution to system \eqref{S-4}. Let  $(u_n^{1},v_{n}^{1})=(u_n,v_n)-(u^0,v^0)$, then $\{(u_n^{1},v_{n}^{1})\}$ is a sequence of Palais-Smale sequence of $I_{\lambda,\infty}$.
If $(u_n^{1},v_{n}^{1})\to (0,0)$ in $H$, then we have done. If $(u_n^{1},v_{n}^{1})\rightharpoonup (0,0)$, but $(u_n^{1},v_{n}^{1})\nrightarrow (0,0)$ in $H$. Setting
\begin{equation*}
(\tilde{u}_{n}^{1}(x), \tilde{v}_{n}^{1}(x)):=\big( u_{n}^{1}(x+y_n^1),  \,\, v_{n}^{1}(x+y_n^1) \big),
\end{equation*}
then we prove that $(\tilde{u}_{n}^{1}, \tilde{v}_{n}^{1})\rightharpoonup(\tilde{u}^1,\tilde{v}^1)$ in $H$, where $(\tilde{u}^1,\tilde{v}^1)$ solves   system \eqref{S3}. By Lemma \ref{lm2.5}$(b)$,  $(\tilde{u}^1,\tilde{v}^1)=(w^1,0)$, where $w^1$ solves  scalar  equation \eqref{eq2.19}.

If $w^1=0$, then by the same argument as in proof of case $(a)$, we can prove $(u^1, v^1)\neq (0,0)$. Next, we need to modify $(u_{n}^{2}(x), v_{n}^{2}(x))$ as follows:
\begin{equation*}
\big(u_{n}^{2}(x), v_{n}^{2}(x)\big):=(u_{n}^{1}(x),v_{n}^{1}(x) )-\sigma_{n}^{-\frac{N-2}{2}} \Big(u^{1}(\frac{x-y_n^1}{\sigma_n}),  \varphi(\frac{x-y_n^1}{\sigma_{n}^{\frac{1}{2}}})v^{1}(\frac{x-y_n^1}{\sigma_n})  \Big).
\end{equation*}
It is easy to check that $\big(u_{n}^{2}(x), v_{n}^{2}(x)\big) \rightharpoonup (0,0)$ in $D^{1,2}(\R^N)\times H^1(\R^N)$.

If $w^1\neq 0$, then  $(\tilde{u}_{n}^{1}, \tilde{v}_{n}^{1})\rightharpoonup(w^1,0)\neq (0,0)$ in $D^{1,2}(\R^N)\times H^1(\R^N)$. We directly modify
\begin{equation*}
\big(u_{n}^{2}(x), v_{n}^{2}(x)\big):=(u_{n}^{1}(x),v_{n}^{1}(x) )- (w^{1}(x-y_n^1),  0).
\end{equation*}
We claim that $y_n^1\to\infty$ as $n\to\infty$. If not,  we then suppose that $\{y_n^1\}$ is bounded. Since $u_{n}^{1}\rightharpoonup 0$ in $D^{1,2}(\R^N)$,  we have $u_{n}^1\to 0$ in $L^2_{loc}(\R^N)$. Hence,  $u_{n}^{1}(x+y_n^1)\to 0$  in $L^2_{loc}(\R^N)$, namely, $\tilde{u}_{n}^1 \to 0$  in $L^2_{loc}(\R^N)$. On the other hand,  as $\tilde{u}_{n}^{1}\rightharpoonup w^1$ in $D^{1,2}(\R^N)$, we have $\tilde{u}_{n}^1 \to w^1$  in $L^2_{loc}(\R^N)$. This is impossible due to $w^1\neq 0$.  Since $|y_n^1|\to\infty$, then $w^{1}(x-y_n^1)\rightharpoonup 0$ in $D^{1,2}(\R^N)$. So $\big(u_{n}^{2}(x), v_{n}^{2}(x)\big) \rightharpoonup (0,0)$ in $D^{1,2}(\R^N)\times H^1(\R^N)$, and  we set $z_{n}^{1}=y_{n}^{1}$ in this case.

Based on above discussion, the rest proof is standard.  We iterate the procedure as in proof in case $(a)$ and prove that
\begin{align*}
  \|(u_n,v_n)\|_{H}^{2}&=\|(u^0,v^0)\|_{H}^{2}+\|(u_n^{\ell+1},v_{n}^{\ell+1})\|_{H}^{2}+\sum_{k=1}^{\ell_1}\|( w^{k}(x-z_{n}^k),0)\|^2_{H_0}\\
  &\quad+\sum_{k=1}^{\ell_2}\|\Big((\sigma_{n}^{k})^{-\frac{N-2}{2}}u^k(\frac{x-y_{n}^{k}}{\sigma_{n}^{k}}), \,\, (\sigma_{n}^{k})^{-\frac{N-2}{2}}v^k(\frac{x-y_{n}^{k}}{\sigma_{n}^{k}})\Big)\|_{ H_0}^{2}+o_{n}(1)
\end{align*}
and
\begin{equation*}
  I(u_n,v_n)=I(u^0,v^0)+\sum_{k=1}^{\ell_1} I_{\infty}(w^k,0)+\sum_{k=1}^{\ell_2}I_{\infty}(u^{k},v^k)+ I_{\lambda,\infty}(u_{n}^{\ell+1},  v_{n}^{\ell+1})+o_{n}(1).
\end{equation*}
Recalling that $I_{\infty}(u^{k},v^k)\geq \frac{1}{4}(k_1+k_2)\mathcal{S}_{HL}^{2}$,  then
 the iteration must terminate at a finite index ${\color{red}{\ell}}$ such that $\|(u_{n}^{\ell+1},v_{n}^{\ell+1})\|_{H}\to 0$, and then $\ell=\ell_1+\ell_2$. Therefore,
\begin{align*}
  \|(u_n,v_n)\|_{H}^{2}&=\|(u^0,v^0)\|_{H}^{2}+\sum_{k=1}^{\ell_1}\|(w^{k}(x-z_{n}^k),0)\|^2_{H_0}\\
  &\quad+\sum_{k=1}^{\ell_2}\|\Big((\sigma_{n}^{k})^{-\frac{N-2}{2}}u^k(\frac{x-y_{n}^{k}}{\sigma_{n}^{k}}),\ (\sigma_{n}^{k})^{-\frac{N-2}{2}}v^k(\frac{x-y_{n}^{k}}{\sigma_{n}^{k}})\Big)\|_{H_0}^{2}+o_{n}(1)
\end{align*}
and
\begin{equation*}
I(u_n,v_n)=I(u^0,v^0)+\sum_{k=1}^{\ell_1}I_{\infty}(w^k,0)+\sum_{k=1}^{\ell_2}I_{\infty}(u^k,v^k)+o_{n}(1),
\end{equation*}

$(c)$\,The proof of case $(c)$ is similar to the proof of  case $(b)$, in the sequel we only omit it for keeping our paper a suitable length.

\end{proof}

\begin{cor}\label{co3.4}
  If $\{(u_n,v_n)\}  \subset \mathcal{N}$ is a Palais-Smale sequence for the constrained functional $I|_{\mathcal{N}}$ at level $d\in (c_{\infty}, \min\{\frac{\mathcal{S}_{HL}^{2}}{4\mu_1}, \frac{\mathcal{S}_{HL}^{2}}{4\mu_2}, 2c_{\infty} \})$, then $\{(u_n,v_n)\}$ is relatively compact.
\end{cor}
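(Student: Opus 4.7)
My plan is to lift $\{(u_n,v_n)\}$ to an unconstrained Palais--Smale sequence for $I$ on $H$, apply the global compactness Theorem \ref{th3.1}, and then use the energy ceiling on $d$, combined with Lemmas \ref{lm2.1}, \ref{lm2.4}, and \ref{lm2.6}, to exclude every bubble and every escape term in the resulting decomposition; once no bubble survives, the norm identity from Theorem \ref{th3.1} gives strong convergence of the sequence.

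First I would verify that $\mathcal{N}$ is a natural constraint. Setting $G(u,v):=\langle I'(u,v),(u,v)\rangle$, the Lagrange multiplier rule produces $\alpha_n\in\mathbb{R}$ with $I'(u_n,v_n)=\alpha_n G'(u_n,v_n)+o_n(1)$. Testing with $(u_n,v_n)$ and using $G(u_n,v_n)=0$ yields $\alpha_n\langle G'(u_n,v_n),(u_n,v_n)\rangle=o_n(1)$; since the critical growth of the Hartree nonlinearity forces $\|(u_n,v_n)\|_H$ to be bounded below on $\mathcal{N}$, the quantity $\langle G'(u_n,v_n),(u_n,v_n)\rangle$ is bounded away from zero, so $\alpha_n\to0$. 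Thus $\{(u_n,v_n)\}$ is an unconstrained Palais--Smale sequence for $I$ at level $d$ in $H$, and Theorem \ref{th3.1} applies.

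Applying Theorem \ref{th3.1} produces a weak limit $(u^0,v^0)$ solving \eqref{S-4}, a finite family of concentrating bubbles $(u^k,v^k)$ solving \eqref{S2}, and, in cases (b) and (c), possibly escape bubbles of the form $(w^k(\cdot-z_n^k),0)$ or $(0,w^k(\cdot-z_n^k))$, with the energy identity
\begin{equation*}
d = I(u^0,v^0) + \sum\nolimits_k I_\infty(\mathrm{bubble}_k).
\end{equation*}
Because the nonlinear terms in \eqref{S2} involve only positive parts, testing against $u^-$ and $v^-$ forces every non-trivial bubble to be non-negative, and the strong maximum principle makes each non-zero component strictly positive. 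By Lemma \ref{lm2.4} every fully positive bubble equals $(\sqrt{k_1}U_{\delta,z},\sqrt{k_2}U_{\delta,z})$ for some $\delta,z$, contributing exactly $c_\infty$. Every semi-trivial bubble (concentrating or escape) solves a scalar critical Hartree equation, so by the uniqueness of its positive ground state it contributes exactly $\frac{\mathcal{S}_{HL}^2}{4\mu_j}\geq\min\{\frac{\mathcal{S}_{HL}^2}{4\mu_1},\frac{\mathcal{S}_{HL}^2}{4\mu_2}\}$.

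The energy ceiling now does the work. Because $d<\min\{\frac{\mathcal{S}_{HL}^2}{4\mu_1},\frac{\mathcal{S}_{HL}^2}{4\mu_2}\}$, no semi-trivial bubble can appear; because $d<2c_\infty$, at most one fully positive bubble can appear. Suppose exactly one fully positive bubble occurs; then $I(u^0,v^0)=d-c_\infty\in(0,c_\infty)$, while $(u^0,v^0)$ is a non-negative solution of \eqref{S-4}. The case $(u^0,v^0)\equiv(0,0)$ gives $d=c_\infty$, contradicting $d>c_\infty$; the semi-trivial case forces $I(u^0,v^0)\geq m_j=\frac{\mathcal{S}_{HL}^2}{4\mu_j}>d-c_\infty$ via Lemma \ref{lm2.1}, a contradiction; the fully positive case places $(u^0,v^0)\in\mathcal{N}$, whence Lemma \ref{lm2.6} gives $I(u^0,v^0)\geq c=c_\infty$ and so $d\geq 2c_\infty$, again a contradiction. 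Hence no bubble appears, the norm decomposition of Theorem \ref{th3.1} collapses to $\|(u_n,v_n)\|_H\to\|(u^0,v^0)\|_H$, which together with the weak convergence yields strong convergence in $H$. The main obstacle is precisely this last single-bubble scenario; ruling it out crucially uses that Lemma \ref{lm2.4} pins every bubble's energy down to exactly $c_\infty$, and that Lemma \ref{lm2.6} forbids the weak limit from sitting at the ground-state level.
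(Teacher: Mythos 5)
Your proof is correct and follows essentially the same strategy as the paper: lift the constrained Palais--Smale sequence to an unconstrained one, apply the global compactness result (Theorem \ref{th3.1}), and then use the classification of bubbles (Lemma \ref{lm2.4} and the scalar uniqueness) together with Lemma \ref{lm2.6} to show that every profile in the decomposition has energy incompatible with $d \in (c_{\infty}, \min\{\frac{\mathcal{S}_{HL}^{2}}{4\mu_1}, \frac{\mathcal{S}_{HL}^{2}}{4\mu_2}, 2c_{\infty}\})$. The only cosmetic difference is one of organization: the paper treats the three sign configurations of $(\lambda_1,\lambda_2)$ separately and, within Case~1, first branches on whether $(u^0,v^0)=(0,0)$, whereas you eliminate all semi-trivial bubbles up front via the energy ceiling $d<\min\{\mathcal{S}_{HL}^2/4\mu_1,\mathcal{S}_{HL}^2/4\mu_2\}$ and then classify the weak limit, which treats all three $\lambda$-cases uniformly---but the underlying ingredients and contradictions are the same.
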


\begin{proof}
Supppse $\{(u_n,v_n)\}$ is a sequence of Palais-Smale sequence for $I|_{\mathcal{N}}$ at level $d$, that is, $I(u_n,v_n)\to d$ and $(I|_{\mathcal{N}})'(u_n,v_n) \to 0$ as $n\to +\infty$. Since $\mathcal{N}$ is a natural constraint, then  $I'(u_n,v_n)\to0$ as $n\to +\infty$.
It is easy to prove that $\{(u_n,v_n)\}$ is bounded  in $H$. Without loss of generality, we may assume that $(u_n,v_n)\rightharpoonup (u^0,v^0)$ in $H$ up to a subsequence. In the following, we will divide the proof into three cases depending on the signs of $\lambda_1$ and $\lambda_2$.


{\bf Case 1:} If $\lambda_1,\lambda_2>0$.  It follows by Theorem \ref{th3.1}$(a)$  that there exist a solution $(u^0,v^0)$ of system \eqref{S-4}, $\ell$ sequences $\{(u^k,v^k)\}$ of solutions of system \eqref{S2},  $1\leq k \leq \ell$, such that
\begin{equation*}
I(u_n,v_n)=I(u^0,v^0)+\sum_{k=1}^{\ell}I_{\infty}(u^k,v^k)+o_{n}(1).
\end{equation*}
If $\ell=0$, then we have done.  Otherwise, we  may suppose  $\ell>0$. Since $d<2c_{\infty}$,  then $\ell=1$, that is
 \begin{equation*}
d=I(u^0,v^0)+I_{\infty}(u^1,v^1).
\end{equation*}

 If $(u^0,v^0)\neq (0,0)$, then from Lemma \ref{lm2.6} we get
 \begin{equation*}
   d=I(u^0,v^0)+I_{\infty}(u^1,v^1)> c+c_{\infty}=2c_{\infty},
 \end{equation*}
which contradicts to the assumption $d\in (c_\infty, \min\{\frac{\mathcal{S}_{HL}^2}{4\mu_1},  \frac{\mathcal{S}_{HL}^2}{4\mu_2},  2c_{\infty}\})$.

 If $(u^0,v^0)=(0,0)$. Since $u^{1}\geq 0$ and $v^{1}\geq 0$,  by Lemma \ref{lm2.4} and the uniqueness of positive solutions for the scalar equation
\begin{equation*}
-\Delta u=\mu_j(|x|^{-4}*|u|^2)u, \ \ j=1,2, \ \  x\in \R^N,
\end{equation*}
we desert that $(u^1,v^1)$ must be, up to translation and dilation, one of the following three  solutions
$$(\sqrt{k_1}U_{1,0},\sqrt{k_2}U_{1,0} ), \ \ \ \ (\frac{1}{\sqrt{\mu_1}}U_{1,0},0), \ \ \ \  (0, \frac{1}{\sqrt{\mu_2}}U_{1,0}),$$
where $k_1=\frac{\beta-\mu_2}{\beta^2-\mu_1\mu_2}, k_2=\frac{\beta-\mu_1}{\beta^2-\mu_1\mu_2}$. Therefore,
$$
\text{either} \ \ d=c_{\infty}, \ \ \text{or} \ \ d=\frac{1}{4\mu_{1}}\|U_{1,0}\|_{D^{1,2}}^{2}=\frac{\mathcal{S}_{HL}^2}{4\mu_1}, \ \ \text{or} \ \ d=\frac{1}{4\mu_{2}}\|U_{1,0}\|_{D^{1,2}}^{2}=\frac{\mathcal{S}_{HL}^2}{4\mu_2},
$$
which also contradicts to assumption $d\in (c_\infty, \min\{\frac{\mathcal{S}_{HL}^2}{4\mu_1},  \frac{\mathcal{S}_{HL}^2}{4\mu_2},  2c_{\infty}\})$.  Therefore $(u_n,v_n) \to (u^0,v^0)$ in $H$.

{\bf Case 2:} If $\lambda_1=0,\lambda_2>0$. Then from Theorem \ref{th3.1}$(b)$  we  assert that there exist  $ \ell_2$ sequences $\{(u^k,v^k)\}$ of solutions of system \eqref{S2},  $1\leq k \leq \ell_2$, $\ell_1$ sequences  $ \{(w^{k},0) \}$ of semi-trivial solution  of system \eqref{S2}, $1\leq k \leq \ell_1$,  such that
\begin{equation*}
I(u_n,v_n)=I(u^0,v^0)+\sum_{k=1}^{\ell_1}I_{\infty}(w^k,0)+\sum_{k=1}^{\ell_2}I_{\infty}(u^k,v^k)+o_{n}(1).
\end{equation*}
\begin{equation*}
I(u_n,v_n)\geq I(u^0,v^0)+\sum_{k=1}^{\ell_1}I_{\infty}(w^k,0)+o_{n}(1).
\end{equation*}
First,  we are going to show $\ell_1=0$. We suppose by contradiction that $\ell_1>0$, then we have
\begin{equation*}
I(u_n,v_n)\geq I(u^0,v^0)+I_{\infty}(w^1,0)+o_{n}(1)\geq I_{\infty}(w^1,0)+o_{n}(1),
\end{equation*}
Which implies that $d\geq \frac{\mathcal{S}_{HL}^2}{4\mu_1}$. thus, there is a contradiction with $d\in (c_\infty, \min\{\frac{\mathcal{S}_{HL}^2}{4\mu_1}, \frac{\mathcal{S}_{HL}^2}{4\mu_2},  2c_{\infty}\})$. Hence, we get
$$
I(u_n,v_n)=I(u^0,v^0)+\sum_{k=1}^{\ell_2}I_{\infty}(u^k,v^k)+o_{n}(1).
$$
By  the same arguments in the proof of case 1, we can also get $\ell_2=0$. So $(u_n,v_n)\to (u^0,v^0)$ in $H$.

{\bf Case 3:} If $\lambda_1>0,\lambda_2=0$. In this case,  we can prove $(u_n,v_n)\to (u^0,v^0)$ in $H$ by repeating the arguments in the proof of case 2, so we omit it here.
\end{proof}

\section{Some basic estimates}\label{s4}

For $(u,v)\in H_0\setminus \{(0,0)\}$,  we introduce a barycenter function
\begin{equation*}
\xi(u,v)=\frac{\int_{\R^N}\frac{x}{1+|x|}\big(\mu_1(u^+)^{2^*}+2\beta (u^+)^{\frac{2^*}{2}}(v^+)^{\frac{2^*}{2}}+\mu_2 (v^+)^{2^*}\big)dx  }{\int_{\R^N}\big(\mu_1(u^+)^{2^*}+2\beta (u^+)^{\frac{2^*}{2}}(v^+)^{\frac{2^*}{2}}+\mu_2(v^+)^{2^*}\big)dx  }
\end{equation*}
and set
\begin{equation*}
  \gamma(u,v)=\frac{\int_{\R^N}|\frac{x}{1+|x|}-\xi(u,v)|\big(\mu_1(u^+)^{2^*}+2\beta (u^+)^{\frac{2^*}{2}}(v^+)^{\frac{2^*}{2}}+\mu_2 (v^+)^{2^*}\big)dx  }{\int_{\R^N}\big(\mu_1(u^+)^{2^*}+2\beta (u^+)^{\frac{2^*}{2}}(v^+)^{\frac{2^*}{2}}+\mu_2(v^+)^{2^*}\big)dx  }
\end{equation*}
to estimate the concentration of $(u,v)$ around its barycenter.
It is not difficult to find that the maps $\xi$ and $\gamma$ are continuous with respect to the $H_0$ norm and for $t>0$ and $(u,v)\in H_{0}$ such that $(u^+,v^+)\neq (0,0)$, 
\begin{equation}\label{e-4.1}
\xi(tu,tv)=\xi(u,v),  \ \ \gamma(tu,tv)=\gamma(u,v).
\end{equation}
\begin{lem}\label{lm4.1}
Suppose that $\lambda_1,\lambda_2\geq 0$, $\lambda=\max\{\lambda_1,\lambda_2\}>0$ and $\beta >\max\{\mu_1,\mu_2\}$. Then
\begin{equation}
c^*=\inf\{I(u,v)\mid (u,v)\in \mathcal{N},  \, \, \xi(u,v)=0, \, \, \gamma(u,v)=\frac{1}{2}  \}>c_{\infty}.
\end{equation}
\end{lem}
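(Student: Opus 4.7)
The plan is a contradiction argument. Suppose $c^{*}=c_{\infty}$ and choose a minimizing sequence $\{(u_n,v_n)\}\subset\mathcal{N}$ with $\xi(u_n,v_n)=0$, $\gamma(u_n,v_n)=\tfrac12$, and $I(u_n,v_n)\to c_{\infty}$. On $\mathcal{N}$ one has $I(u,v)=\tfrac14\|(u,v)\|_{H}^{2}$, so the sequence is bounded in $H$. Using Lemma \ref{lm2.2}, pick $\tau_n\in(0,1]$ with $(\tau_n u_n,\tau_n v_n)\in\mathcal{N}_{\infty}$. Then
\[
c_{\infty}\leq I_{\infty}(\tau_n u_n,\tau_n v_n)\leq I(u_n,v_n)\to c_{\infty},
\]
so the rescaled pair is a minimizing sequence for $I_{\infty}$ on $\mathcal{N}_{\infty}$, and by \eqref{e-4.1} it still satisfies $\xi=0$, $\gamma=\tfrac12$.

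Next I would apply Ekeland's variational principle on the natural constraint $\mathcal{N}_{\infty}$ to upgrade this to a Palais--Smale sequence for $I_{\infty}$ at level $c_{\infty}$; since Ekeland perturbs by amounts vanishing in the $H_{0}$-norm, the continuity of $\xi,\gamma$ on $H_{0}\setminus\{(0,0)\}$ preserves the limit values $0$ and $\tfrac12$. Then the profile decomposition for $I_{\infty}$ on $H_{0}$ (the limit-problem analogue of Theorem \ref{th3.1}), combined with Lemma \ref{lm2.4} classifying non-trivial positive solutions of \eqref{S2} as coupled ground states, applies: because $c_{\infty}<2c_{\infty}$ only one profile survives, so there exist $\delta_n>0$ and $z_n\in\R^N$ with
\[
\tau_n u_n-\sqrt{k_1}\,U_{\delta_n,z_n}\to 0,\qquad \tau_n v_n-\sqrt{k_2}\,U_{\delta_n,z_n}\to 0\quad\text{in }D^{1,2}(\R^N).
\]
One may therefore replace $(\tau_n u_n,\tau_n v_n)$ by this bubble pair when computing $\xi$ and $\gamma$ in the limit.

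The change of variables $x=\delta_n y+z_n$ turns $U_{\delta_n,z_n}^{2^{*}}(x)\,dx$ into a fixed radial density $W(y)\,dy\propto(1+|y|^{2})^{-N}\,dy$, yielding
\[
\xi_n^{\ast}=\frac{\int_{\R^N}\frac{\delta_n y+z_n}{1+|\delta_n y+z_n|}\,W(y)\,dy}{\int_{\R^N}W(y)\,dy}
\]
and an analogous formula for $\gamma_n^{\ast}$. A case analysis on $(\delta_n,z_n)$ then closes the argument. If $\delta_n\to 0$, dominated convergence forces $\xi_n^{\ast}\to z_n/(1+|z_n|)$, so $\xi=0$ requires $z_n\to 0$ and then $\gamma_n^{\ast}\to 0\neq\tfrac12$. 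If $\delta_n\to\infty$ with $|z_n|/\delta_n$ bounded, the integrand converges to $y/|y|$ on $\R^N\setminus\{0\}$; radial symmetry of $W$ gives $\xi_n^{\ast}\to 0$, but $\gamma_n^{\ast}\to 1\neq\tfrac12$. If $|z_n|\to\infty$ at a scale $\gtrsim\delta_n$, then $\xi_n^{\ast}$ converges along subsequences to a non-zero unit vector, violating $\xi=0$.

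The delicate remaining case is $\delta_n\to\delta_0\in(0,\infty)$ and $z_n\to z_0\in\R^N$. Here $(\tau_n u_n,\tau_n v_n)$ converges strongly in $H_{0}$ to $(U^{\ast},V^{\ast}):=(\sqrt{k_1}\,U_{\delta_0,z_0},\sqrt{k_2}\,U_{\delta_0,z_0})\neq(0,0)$, which lies in $H$ because $N\geq 5$ ensures $U_{\delta_0,z_0}\in L^{2}(\R^N)$. Combining the two energy identities
\[
\|(\tau_n u_n,\tau_n v_n)\|_{H_{0}}^{2}=4I_{\infty}(\tau_n u_n,\tau_n v_n)\to 4c_{\infty},\qquad \|(u_n,v_n)\|_{H}^{2}=4I(u_n,v_n)\to 4c_{\infty},
\]
with $\tau_n\leq 1$ forces $\tau_n\to 1$ together with
\[
\int_{\R^N}\bigl(V_1 u_n^{2}+V_2 v_n^{2}+\lambda_1 u_n^{2}+\lambda_2 v_n^{2}\bigr)\,dx\to 0.
\]
Since $\lambda=\max\{\lambda_1,\lambda_2\}>0$, the component of $(u_n,v_n)$ attached to a positive $\lambda_j$ must go to zero in $L^{2}(\R^N)$, while simultaneously converging to the nonzero limit $U^{\ast}$ or $V^{\ast}$ in $D^{1,2}(\R^N)$---a contradiction. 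The main obstacle I expect is precisely this last case: $\xi$ and $\gamma$ alone cannot rule out the bounded bubble profile, so the hypothesis $\lambda>0$ must be coupled to the two distinct energy identities, and setting up the $D^{1,2}$-level profile decomposition sharply enough to extract the explicit bubble $\sqrt{k_j}\,U_{\delta_n,z_n}$ requires care.
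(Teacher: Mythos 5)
Your proposal is correct in substance and reaches the right contradiction, but it follows a genuinely different route from the paper's. The paper applies Ekeland's principle directly on $\mathcal{N}$ to produce a Palais--Smale sequence for $I|_{\mathcal{N}}$ with $\xi\to 0$, $\gamma\to\frac12$, invokes its own Theorem \ref{th3.1} (the global compactness result for $I$ on the weighted space $H$) together with Lemma \ref{lm2.6} to obtain the bubble form $(u_n,v_n)=(\sqrt{k_1}U_{\delta_n,y_n},\sqrt{k_2}U_{\delta_n,y_n})+o(1)$ in $H_0$, and then derives the contradiction by passing to the limit directly in the expression $4c_\infty = \lim\big[\|(u_n,v_n)\|_{H_0}^2 + \int(V_1+\lambda_1)u_n^2 + \int(V_2+\lambda_2)v_n^2\big]$ to obtain a lower bound strictly exceeding $(k_1+k_2)\mathcal{S}_{HL}^2$. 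You instead first project onto $\mathcal{N}_\infty$ via Lemma \ref{lm2.2}, apply Ekeland there, invoke a $D^{1,2}\times D^{1,2}$-level profile decomposition for $I_\infty$ (not Theorem \ref{th3.1}, but its limit-problem analogue proved in \cite{GLMY}), and close the argument through the two Nehari identities $\|(\tau_n u_n,\tau_n v_n)\|_{H_0}^2\to 4c_\infty$ and $\|(u_n,v_n)\|_H^2\to 4c_\infty$, which together with $\tau_n\le 1$ force $\tau_n\to 1$ and $\int(V_j+\lambda_j)\cdot\,\to 0$; the contradiction is then that $L^2$-vanishing of a component is incompatible with its $D^{1,2}$-convergence to a nonzero bubble fixed in space. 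Both routes share the same skeleton (Ekeland, bubble extraction, $(\delta_n,z_n)$ case analysis via $\xi,\gamma$, and $\lambda>0$ at the end), but your approach trades the paper's $H$-level compactness machinery for the simpler $H_0$-level one at the cost of the extra projection step; the paper's is more tightly integrated with its own Theorem \ref{th3.1}. Your $\tau_n\to 1$ identity is a neat alternative to the paper's direct Fatou-type lower bound, and in fact gives $\int(V_j+\lambda_j)\cdot\to 0$ without any case analysis (though you still need the cases to pin down $(\delta_0,z_0)$ and rule out escape). One small gap to fill: before invoking Lemma \ref{lm2.4} you must rule out the semi-trivial profiles $(\tfrac{1}{\sqrt{\mu_1}}U_{\delta,z},0)$ and $(0,\tfrac{1}{\sqrt{\mu_2}}U_{\delta,z})$; this follows from their energies $\frac{\mathcal{S}_{HL}^2}{4\mu_j}$ strictly exceeding $c_\infty=\frac{k_1+k_2}{4}\mathcal{S}_{HL}^2$ when $\beta>\max\{\mu_1,\mu_2\}$, a step the paper spells out and which you should make explicit.
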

\begin{proof}
From Lemma \ref{lm2.6} we have $c^{*}\geq c= c_{\infty}$. We argue  by contradiction and assume that $c^*=c_{\infty}$. Then by Ekeland's variational principle, there exists a sequence of $\{(u_n,v_n)\}\subset \mathcal{N}$ such that, as $n\to \infty$,
\begin{equation}\label{e-4.3}
\begin{cases}
 \xi(u_n,v_n)\to 0, \ \    \gamma(u_n,v_n) \to \frac{1}{2}, \\
  I(u_n,v_n) \to c_{\infty}, \ \ (I|_{\mathcal{N}})'(u_n,v_n)  \to 0.
\end{cases}
\end{equation}
As Nehari manifold $\mathcal{N}$ is natural constraint, then $\{(u_n,v_n)\}$ is a sequence of Palais-Smale sequence of $I$ at  $c_{\infty}$. By Lemma \ref{lm2.6} and Theorem \ref{th3.1}, we deduce that there exist sequences of $\delta_n>0$ and $y_{n}\in \R^N$, and a nonzero solution $(u,v)$ of system \eqref{S2} satisfying
$I_{\infty}(u,v)=c_{\infty}$ such that
$$
\Big(\delta_{n}^{\frac{N-2}{2}}u_{n}(\delta_n x+y_n),  \delta_{n}^{\frac{N-2}{2}}v_{n}(\delta_n x+y_n)\Big) \to (u,v) \ \ \text{in} \ \ H_{0}.
$$
Then $(u,v)$ must be a positive solution of system \eqref{S2}. Otherwise, if $u\neq 0, v=0$ or $u= 0, v\neq 0$ , then $u>0$  or $v>0$ for $x\in \R^{N}$. By the uniqueness of positive solutions of
\begin{equation*}
-\Delta u=\mu_j(|x|^{-4}*|u|^2)u, \ \ j=1,2, \ \  x\in \R^N,
\end{equation*}
then $u=\frac{1}{\sqrt{\mu_1}}U_{\delta,y}$ or  $v=\frac{1}{\sqrt{\mu_2}}U_{\delta,y}$ for some $\delta>0$ and $y\in \R^N$. Furthermore, we deduce that
$$
c_{\infty}=I_{\infty}(u,0)=\frac{1}{4\mu_1}\mathcal{S}_{HL}^2, \ \ or \  c_{\infty}=I_{\infty}(0,v)=\frac{1}{4\mu_2}\mathcal{S}_{HL}^2 ,
$$
which is absurd due to
$$
c_{\infty}=\frac{k_1+k_2}{4}\mathcal{S}_{HL}^2, \ \ \frac{1}{\mu_1}>k_1+k_2, \ \ \frac{1}{\mu_2}>k_1+k_2.
$$
Thus we have
\begin{equation}\label{e-4.4}
  (u_n,v_n)=(\sqrt{k_1}U_{\delta_n,y_n}, \sqrt{k_2}U_{\delta_n,y_n})+(\varphi_n,\psi_n),
\end{equation}
where $(\varphi_n,\psi_n) \to (0,0)$ in $H_{0}$ as $n\to \infty$.

Next, we would like to show that
\begin{equation}\label{e-4.5}
\begin{cases}
\displaystyle \lim_{n\to \infty}\delta_n= \delta_0>0, \ \ & (a) \\
\displaystyle \lim_{n\to\infty}y_n=y_0\in \R^N,   \ \     & (b).
\end{cases}
\end{equation}
In order to prove \eqref{e-4.5}$(a)$, we start  to show that $\{\delta_n\}$ is bounded. Otherwise, then there exists a subsequence $\{\delta_{n_j}\}$ of $\{\delta_n\}$, still denoted by $\{\delta_n\}$, such that $\lim_{n\to\infty}\delta_n=\infty$. Then for  each $r>0$, we have
\begin{equation*}
 \lim_{n\to \infty}\int_{B_{r}(0)}| U_{\delta_n,y_n}(x)|^{2^*}dx=0.
\end{equation*}
From \eqref{e-4.3} we know that $\lim_{n\to\infty}\xi(u_n,v_n)=0$, then
\begin{align*}
  \frac{1}{2}&=\lim_{n\to \infty}\gamma(u_n,v_n) \\
  &=\lim_{n\to \infty}\frac{\int_{\R^N}\frac{|x|}{1+|x|} (\mu_1k_{1}^{\frac{2^*}{2}}+2\beta k_{1}^{\frac{2^*}{4}}k_{2}^{\frac{2^*}{4}}+\mu_2k_{2}^{2^*}) | U_{\delta_n,y_n}|^{2^*}dx }{\int_{\R^N}(\mu_1k_{1}^{\frac{2^*}{2}}+2\beta k_{1}^{\frac{2^*}{4}}k_{2}^{\frac{2^*}{4}}+\mu_2k_{2}^{2^*})
  | U_{\delta_n,y_n}|^{2^*}dx  }\\
 &=\lim_{n\to \infty}\frac{\int_{\R^N\setminus B_{r}(0)}\frac{|x|}{1+|x|}| U_{\delta_n,y_n}|^{2^*}dx  +o_{n}(1) }{\int_{\R^N\setminus B_{r}(0)}| U_{\delta_n,y_n}|^{2^*}dx +o_{n}(1)  }\\
  & \geq \frac{r}{1+r},
\end{align*}
which is impossible provided that $r>1$. Hence, $\{\delta_n\}$ must be bounded. Up to a subsequence, we  may suppose that $\delta_n \to \delta_0\geq 0$ as $n\to \infty$. If $\delta_0=0$, then for each $r>0$, we have
\begin{align*}
  \lim_{n\to\infty}\int_{\R^N\setminus B_{r}(y_n)}| U_{\delta_n,y_n}(x)|^{2^*}dx
 = \lim_{n\to\infty}\int_{\R^N\setminus B_{r}(0)}| U_{\delta_n,0}(x)|^{2^*}dx =0.
\end{align*}
By the assumption  $\lim_{n\to \infty}\xi(u_n,v_n)= 0$ again,  then  we get for each $r>0$,
\begin{align*}
\frac{|y_n|}{1+|y_n|}
&= \Big| \frac{y_n}{1+|y_n|}-\xi(u_n,v_n) \Big|+o_{n}(1) \\
&= \frac{\int_{\R^N}\Big|\frac{x}{1+|x|}- \frac{y_n}{1+|y_n|} \Big| (\mu_1k_{1}^{\frac{2^*}{2}}+2\beta k_{1}^{\frac{2^*}{4}}k_{2}^{\frac{2^*}{4}}+\mu_2k_{2}^{2^*}) | U_{\delta_n,y_n}|^{2^*}dx   }
{\int_{\R^N}(\mu_1k_{1}^{\frac{2^*}{2}}+2\beta k_{1}^{\frac{2^*}{4}}k_{2}^{\frac{2^*}{4}}+\mu_2k_{2}^{2^*})| U_{\delta_n,y_n}|^{2^*}dx  }+o_{n}(1) \\
&=\frac{\int_{B_{r}(y_n)}\Big|\frac{x}{1+|x|}- \frac{y_n}{1+|y_n|} \Big| | U_{\delta_n,y_n}|^{2^*}dx  }{\int_{B_{r}(y_n)}| U_{\delta_n,y_n}|^{2^*}dx }+o_{n}(1)  \\
&\leq 2r+o_{n}(1).
\end{align*}
Thus, we assert that $|y_n|\to 0$ as $n\to \infty$. Repeating the calculation  above, one can easily find that
\begin{equation*}
 0\leq \gamma(u_n,v_n)\leq\frac{\int_{\R^N}\Big|\frac{x}{1+|x|}- \frac{y_n}{1+|y_n|} \Big| | U_{\delta_n,y_n}|^{2^*}dx  }{\int_{\R^N}| U_{\delta_n,y_n}|^{2^*}dx }+o_{n}(1) \leq 2r+o_{n}(1),
\end{equation*}
from which it follows that
$$\lim_{n\to \infty}\gamma(u_n,v_n)=0.$$
Obviously, there is a contradiction with\eqref{e-4.3}. Thus, \eqref{e-4.5}$(a)$ holds true.

Now we shall to show that $\{y_n\} \subset \R^N$ is bounded. We argue by contradiction and suppose that $\lim_{n\to \infty}|y_n|=\infty$. Then for each $\varepsilon>0$, we can find $r_0=r_{0}(\varepsilon)>0$ such that for all $n\in \N$,
\begin{align}\label{e-4.8}
   \int_{\R^N \setminus B_{r_0}(y_n)}| U_{\delta_0,y_n}|^{2^*}dx  =\int_{\R^N \setminus B_{r_0}(0)}| U_{\delta_0,0}|^{2^*}dx
    <\varepsilon.
\end{align}
 For such a fixed $r_0$, there  exists $N^*\in \N$  such that for each $n\geq N^*$  and $x\in B_{r_0}(y_n)$,
\begin{equation}\label{e-4.7}
  \Big|\frac{x}{1+|x|}-\frac{y_n}{1+|y_n|} \Big|<\varepsilon.
\end{equation}
By  \eqref{e-4.8} and \eqref{e-4.7}, for  $n$ large enough, we prove that
\begin{align*}
  \Big|\xi(u_n,v_n)-\frac{y_n}{1+|y_n|} \Big|
  &\leq \frac{\int_{\R^N}\Big|\frac{x}{1+|x|}- \frac{y_n}{1+|y_n|} \Big| | U_{\delta_0,y_n}|^{2^*}dx }{\int_{\R^N}| U_{\delta_0,y_n}|^{2^*}dx } \\
  &\leq \frac{ \varepsilon \int_{B_{r_0}(y_n)}| U_{\delta_0,y_n}|^{2^*}dx +o_{n}(1)}{ \int_{\R^N}| U_{\delta_0,y_n}|^{2^*}dx   } + \frac{2 \int_{\R^N  \setminus B_{r_0}(y_n)}| U_{\delta_0,y_n}|^{2^*}dx}{  \int_{\R^N}| U_{\delta_0,y_n}|^{2^*}dx} \\
  & \leq  C\varepsilon,
\end{align*}
where positive constant  $C$ is independent of $y_n$ and $r_0$. Based on the inequality above together with the assumption $\lim_{n\to \infty}|y_n|=\infty$, we get
\begin{equation*}
  \lim_{n\to \infty}|\xi(u_n,v_n)| =1,
\end{equation*}
which  contradicts with \eqref{e-4.3}. Therefore, we have proved that $\{y_n\}$ is bounded in $\R^N$ and  then \eqref{e-4.5}$(b)$ holds true.

Recalling that $V_1(x)$, $V_2(x)$ are nonnegative and $\lambda_1,\lambda_2 \geq 0$ with $\lambda=\max\{\lambda_1,\lambda_2\}>0$, by \eqref{e-4.4} and \eqref{e-4.5} we get
\begin{align*}
  4c_{\infty}
  &=\lim_{n\to \infty} \Big[ \int_{\R^N}(|\nabla u_n|^{2}+|\nabla v_n|^{2})dx+\int_{\R^N}(V_{1}(x)+\lambda_1) |u_n|^{2}dx+ \int_{\R^N}(V_{2}(x)+\lambda_2) |v_n|^{2}dx \Big]\\
  & \geq  (k_1+k_2)\int_{\R^N}\nabla|U_{\delta_0,y_0}|^{2}dx + \int_{\R^N}\big(k_1V_1(x)+k_2V_2(x)\big)|U_{\delta_0,y_0}|^{2}dx  \\
  & \quad + \int_{B_{r_0}(y_0)}(k_1\lambda_1+ k_2\lambda_2)|U_{\delta_0,y_0}|^{2}dx \\
  &>(k_1+k_2)\mathcal{S}_{HL}^2=4c_{\infty},
\end{align*}
which is impossible and thus  the  proof is completed.
\end{proof}

\begin{lem}
Suppose that $\lambda_1,\lambda_2\geq 0$, $\lambda=\max\{\lambda_1,\lambda_2\}>0$ and $\beta >\max\{\mu_1,\mu_2\}$. Then
\begin{equation}\label{e-4.9}
c^{**}=\inf\{I(u,v)\mid (u,v)\in \mathcal{N},  \, \, \xi(u,v)=0, \, \, \gamma(u,v)\geq\frac{1}{2}  \}>c_{\infty}.
\end{equation}
\end{lem}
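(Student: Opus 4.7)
The strategy mirrors the proof of Lemma 4.1 (and is in fact slightly shorter): we argue by contradiction, assuming $c^{**} = c_\infty$. By Ekeland's variational principle applied to the constrained minimization defining $c^{**}$, we produce a sequence $\{(u_n,v_n)\} \subset \mathcal{N}$ with $\xi(u_n,v_n) \to 0$, $\gamma(u_n,v_n) \geq \frac{1}{2}$, $I(u_n,v_n) \to c_\infty$, and $(I|_{\mathcal{N}})'(u_n,v_n) \to 0$. Since $\mathcal{N}$ is a natural constraint and $c_\infty = c$ by Lemma \ref{lm2.6}, this gives a Palais--Smale sequence for $I$ at level $c$.

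Next I apply Theorem \ref{th3.1} and push through the same profile reduction as in the proof of Lemma 4.1. The energy identity $I(u_n,v_n) = I(u^0,v^0) + \sum_k I_\infty(u^k,v^k) + o_n(1)$, combined with $I(u^0,v^0) \geq 0$, $I_\infty(u^k,v^k) \geq c_\infty$, the strict bounds $\mathcal{S}_{HL}^2/(4\mu_j) > c_\infty = \frac{1}{4}(k_1+k_2)\mathcal{S}_{HL}^2$ (which rule out semitrivial profiles under $\beta > \max\{\mu_1,\mu_2\}$), and the non-attainment of $c$ from Lemma \ref{lm2.6}, collapses the decomposition to the case $(u^0,v^0)=(0,0)$ with exactly one concentrating bubble achieving $I_\infty = c_\infty$. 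By the ground-state classification in Lemma \ref{lm2.4}, there exist $\delta_n \to 0^+$ and $y_n \in \R^N$ such that
\begin{equation*}
(u_n,v_n) = \bigl(\sqrt{k_1}\,U_{\delta_n,y_n},\,\sqrt{k_2}\,U_{\delta_n,y_n}\bigr) + (\varphi_n,\psi_n),\qquad (\varphi_n,\psi_n)\to (0,0)\ \text{in}\ H_0.
\end{equation*}

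To finish, I track $\xi$ and $\gamma$ under this profile. Setting $\rho_n := \mu_1(u_n^+)^{2^*} + 2\beta(u_n^+ v_n^+)^{2^*/2} + \mu_2(v_n^+)^{2^*}$, the decomposition gives $\rho_n \sim A\,|U_{\delta_n,y_n}|^{2^*}$ in $L^1(\R^N)$ with constant $A>0$, and the normalized measure $|U_{\delta_n,y_n}|^{2^*}/\|U_{\delta_n,y_n}\|_{L^{2^*}}^{2^*}\,dx$ concentrates at $y_n$ as $\delta_n \to 0$. Since $x \mapsto x/(1+|x|)$ is bounded and continuous, the weighted average of this map against $\rho_n$ converges to its value at the concentration point, so $\xi(u_n,v_n) \to \lim y_n/(1+|y_n|)$ along a subsequence; the constraint $\xi(u_n,v_n)\to 0$ excludes $|y_n|\to \infty$ (which would force $|\xi|\to 1$) and yields $y_n \to 0$. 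Applying the same concentration principle to $x \mapsto |x/(1+|x|) - \xi(u_n,v_n)|$ gives $\gamma(u_n,v_n) \to 0$, contradicting $\gamma(u_n,v_n)\ge\frac{1}{2}$. The main obstacle is the profile-reduction step, which rests on Theorem \ref{th3.1} (splitting into three cases according to the signs of $\lambda_1,\lambda_2$ and possibly translated semitrivial profiles), on the classification of ground states, and on Lemma \ref{lm2.6}; the key difference from Lemma 4.1 is that the conclusion $\gamma \to 0$ is already incompatible with the one-sided inequality $\gamma \geq \frac{1}{2}$, so the $V+\lambda$ contribution argument used at the end of Lemma 4.1 is not needed here.
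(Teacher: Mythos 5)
Your proof is correct, but it reaches the contradiction by a genuinely different route than the paper. Both arguments agree on the setup: assume $c^{**}=c_\infty$, produce a Palais--Smale sequence at level $c_\infty$ via Ekeland, and invoke Theorem~\ref{th3.1} together with Lemma~\ref{lm2.4} and Lemma~\ref{lm2.6} to reduce to a single ground-state bubble, $(u_n,v_n)=(\sqrt{k_1}U_{\delta_n,y_n},\sqrt{k_2}U_{\delta_n,y_n})+o_{H_0}(1)$. You then read off $\delta_n\to 0$ directly from the assertion $\sigma_n^k\to 0$ in Theorem~\ref{th3.1} (after absorbing the fixed scale of the limiting profile into $\sigma_n$) and use the concentration of the $L^{2^*}$-density to conclude $\gamma(u_n,v_n)\to 0$, which clashes with $\gamma\ge\tfrac12$ at once; the $V_j$ and $\lambda_j$ terms never enter. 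The paper instead does not rely on $\sigma_n\to 0$ at this stage: it first rules out $\delta_n\to\infty$ by showing the $\lambda_j\int|U_{\delta_n,y_n}|^2$ contribution blows up like $\delta_n^2$ (this is where $\lambda>0$ is crucially used, and where your argument and the paper's part ways), then rules out $\delta_n\to 0$ and $|y_n|\to\infty$ by reusing the $\xi$--$\gamma$ estimates established in Lemma~\ref{lm4.1}, and finally lands on $c_\infty>c_\infty$ from the strict positivity of the $V+\lambda$ contribution at the limit $(\delta^*,y^*)$ with $\delta^*>0$. Your route is shorter precisely because it exploits the extra piece of information $\sigma_n^k\to 0$, which Theorem~\ref{th3.1} records but which the paper in fact never invokes in Section~\ref{s4}; if you present your version, you should say explicitly that $\delta_n\to 0$ is inherited from Theorem~\ref{th3.1}, since the paper's own derivation of $\delta^*>0$ is deliberately independent of that fact.
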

\begin{proof}
By an analogous argument in the proof of Lemma \ref{lm4.1}, we succeed in proving \eqref{e-4.9}.  Here we only give a brief proof for the reader's convenience. Obviously,
$c^{**}\geq c_{\infty}$.  If $c^{**}=c_{\infty}$, then it follows by the Ekeland's principle  that there exists a sequence $\{(u_n,v_n)\}\subset \mathcal{N}$ such that, as $n\to \infty$,
\begin{equation}\label{e-4.10}
 \xi(u_n,v_n)\to 0, \ \ \gamma(u_n,v_n)\geq \frac{1}{2}
\end{equation}
and
\begin{equation}\label{e-4.11}
  I(u_n,v_n)\to c_{\infty}, \ \ (I|_{\mathcal{N}})'(u_n,v_n)\to 0.
\end{equation}
Moreover, $\{(u_n,v_n)\}$ is a sequence of Palais-Smale sequence of $I$ at level $c_{\infty}$. By the same computations made in  Lemma \ref{lm4.1}, we then have
\begin{equation*}
(u_n,v_n)=(\sqrt{k_1}U_{\delta_n,y_n}, \sqrt{k_2}U_{\delta_n,y_n})+(\tilde{\varphi}_n,\tilde{\psi}_n)
\end{equation*}
where $\delta_{n} >0$, $y_n\in \R^N$ and $(\tilde{\varphi}_n,\tilde{\psi}_n) \to 0$ in $H_{0} $.

We claim that $\{\delta_n\}$ is bounded. Otherwise,   $\delta_{n} \to \infty$.  Then
\begin{align*}
  c_{\infty}&=\lim_{n\to \infty} I(u_n)-\frac{1}{4}\langle I'(u_n,v_n), (u_n,v_n)   \rangle  \\
  &=\lim_{n\to \infty} \frac{1}{4}\Big[ \int_{\R^N}(|\nabla u_n|^{2}dx+|\nabla v_n|^{2})dx+\int_{\R^N}(V_{1}(x)+\lambda_1) |u_n|^{2}dx \\
  & \qquad + \int_{\R^N}(V_{2}(x)+\lambda_2) |v_n|^{2}dx \Big]\\
  &  \geq  \liminf_{n\to \infty} \frac{1}{4}\Big[ \int_{\R^N}(|\nabla u_n|^{2}+|\nabla v_n|^{2})dx+ \lambda_1\int_{B_{\delta_n}(y_n)} |u_n|^{2}dx+ \lambda_2\int_{B_{\delta_n}(y_n)} |v_n|^{2}dx \Big]    \\
  &   \geq  \frac{k_1+k_2}{4}\mathcal{S}_{HL}^2+ \liminf_{n\to \infty} \frac{1}{4} \Big( \lambda_1 \int_{B_{\delta_n}(y_n)} |u_n|^{2}dx+ \lambda_2\int_{B_{\delta_n}(y_n)} |v_n|^{2}dx \Big)               \\
  & \geq \frac{k_1+k_2}{4}\mathcal{S}_{HL}^2+ \frac{\lambda_1 k_1}{4}\delta_{n}^{2}\int_{B_{1}(0)}|U_{1,0}|^{2}dx +\frac{\lambda_2 k_2}{4}\delta_{n}^{2}\int_{B_{1}(0)}|U_{1,0}|^{2}dx +o(\delta_n^2) \to \infty,
\end{align*}
which leads to a contradiction.
Hence, $\{\delta_n\}$ is bounded,  and up to a subsequence, we may assume that $\lim_{n\to \infty}\delta_{n}=\delta^*$.  Working again as the proof of \eqref{e-4.5} in Lemma \ref{lm4.1}, we can also prove that  $\delta^*>0$ and bounded sequence $\{y_n\}$  satisfies  $y_n \to y^*$.  Thus,
\begin{align*}
  c_{\infty}&=\lim_{n\to \infty}\frac{1}{4} \Big[ \int_{\R^N}(|\nabla u_n|^{2}+|\nabla v_n|^{2})dx+\int_{\R^N}(V_{1}(x)+\lambda_1 ) u_n^{2}dx+ \int_{\R^N}(V_{2}(x)+\lambda_2 ) v_n^{2}dx \Big]\\
  & \geq \frac{(k_1+k_2)}{4}\mathcal{S}_{HL}^2+\frac{\lambda_1 k_1}{4}\int_{B_{\delta^*}(y^*)}|U_{\delta^*,y^*}|^{2}dx +\frac{\lambda_2 k_2}{4}\int_{B_{\delta^*}(y^*)}|U_{\delta^*,y^*}|^{2}dx \\
  &>\frac{k_1+k_2}{4}\mathcal{S}_{HL}^2=c_{\infty}.
\end{align*}
Obviously, this is impossible. Therefore, $c^{**}=c_{\infty}$ cannot occur and  then we complete  the  whole proof.
\end{proof}

Note that  $k_1=\frac{\beta-\mu_2}{\beta^{2}-\mu_1\mu_2}$,  $k_2=\frac{\beta-\mu_1}{\beta^{2}-\mu_1\mu_2}$ and $\beta>\max\{\mu_1,\mu_2\}$. From assumption $(A_3)$, we have
 \begin{align*}
 0& < \frac{\beta-\mu_2}{2\beta-\mu_1-\mu_2}C(N,4)^{-\frac{1}{2}}\|V_1\|_{L^{\frac{N}{2}}}+\frac{\beta-\mu_1}{2\beta-\mu_1-\mu_2}C(N,4)^{-\frac{1}{2}}\|V_2\|_{L^{\frac{N}{2}}}  \\
 & <
 \min \Big\{ \sqrt{\frac{\beta^2-\mu_1\mu_2}{\mu_1(2\beta-\mu_1-\mu_2)}},  \sqrt{\frac{\beta^2-\mu_1\mu_2}{\mu_2(2\beta-\mu_1-\mu_2)}}, \sqrt{2}  \Big\} \mathcal{S}_{HL}-\mathcal{S}_{HL}.
 \end{align*}
Let us consider
$$
f(t)=2^{-\frac{1-t}{2}}\min \Big\{\sqrt{\frac{\beta^2-\mu_1\mu_2}{\mu_1(2\beta-\mu_1-\mu_2)}},
   \sqrt{\frac{\beta^2-\mu_1\mu_2}{\mu_2(2\beta-\mu_1-\mu_2)}}, \sqrt{2} \Big\} \mathcal{S}_{HL}-\mathcal{S}_{HL}.
$$
It is not difficult to prove that $f(t)$ is continuous and increasing in $[0,1]$. Moreover, $f(0)\leq 0$ and $f(1)>0$. Let us choose a constant
$a\in (0,1)$  such that $ f(a)>0$, and
\begin{align}\label{guo-e-4.15}
 0< & \frac{\beta-\mu_2}{2\beta-\mu_1-\mu_2}C(N,4)^{-\frac{1}{2}}\|V_1\|_{L^{\frac{N}{2}}}+\frac{\beta-\mu_1}{2\beta-\mu_1-\mu_2}C(N,4)^{-\frac{1}{2}}\|V_2\|_{L^{\frac{N}{2}}} \nonumber \\
   &= 2^{-\frac{1-a}{2}}\min \Big\{\sqrt{\frac{\beta^2-\mu_1\mu_2}{\mu_1(2\beta-\mu_1-\mu_2)}},
   \sqrt{\frac{\beta^2-\mu_1\mu_2}{\mu_2(2\beta-\mu_1-\mu_2)}}, \sqrt{2} \Big\} \mathcal{S}_{HL}-\mathcal{S}_{HL}.
\end{align}
Next, we fix a constant $\bar{c}$ such that
\begin{equation}\label{e-4.14}
c_{\infty}<\bar{c}<\min\{\frac{c^*+c_{\infty}}{2},  2^{1-a}c_{\infty}   \}.
\end{equation}
We remark that, as $f(a)>0$, then
\begin{align*}
  & \qquad  2^{-\frac{1-a}{2}}\min \Big\{\sqrt{\frac{\beta^2-\mu_1\mu_2}{\mu_1(2\beta-\mu_1-\mu_2)}},
   \sqrt{\frac{\beta^2-\mu_1\mu_2}{\mu_2(2\beta-\mu_1-\mu_2)}}, \sqrt{2} \Big\} \mathcal{S}_{HL}-\mathcal{S}_{HL}>0 \\
 &\Rightarrow  2^{-\frac{1-a}{2}}\min \Big\{\sqrt{\frac{\beta^2-\mu_1\mu_2}{\mu_1(2\beta-\mu_1-\mu_2)}},
   \sqrt{\frac{\beta^2-\mu_1\mu_2}{\mu_2(2\beta-\mu_1-\mu_2)}}, \sqrt{2} \Big\} >1 \\
   & \Rightarrow  \min \Big\{\sqrt{\frac{\beta^2-\mu_1\mu_2}{\mu_1(2\beta-\mu_1-\mu_2)}},
   \sqrt{\frac{\beta^2-\mu_1\mu_2}{\mu_2(2\beta-\mu_1-\mu_2)}}, \sqrt{2} \Big\} > 2^{\frac{1-a}{2}} \\
  & \Rightarrow  \min \Big\{\frac{\beta^2-\mu_1\mu_2}{\mu_1(2\beta-\mu_1-\mu_2)},
   \frac{\beta^2-\mu_1\mu_2}{\mu_2(2\beta-\mu_1-\mu_2)}, 2 \Big\} > 2^{1-a} \\
   & \Rightarrow  \min\{\frac{\mathcal{S}_{HL}^{2}}{4\mu_1}, \frac{\mathcal{S}_{HL}^2}{4\mu_2}, 2c_{\infty} \} \geq   2^{1-a}c_{\infty}.
\end{align*}
In the following,  $\vartheta(x)$ is a function that belongs to $C_{0}^{\infty}(B_{1}(0))$ and satisfies the  properties below:
\begin{equation}\label{e-4.15}
  \begin{cases}
$(i)$~\vartheta(x)\in C_{0}^{\infty}(B_{1}(0)),  \,\, \vartheta(x)\geq 0, \forall  \,  x\in B_{1}(0) ;\\
$(ii)$~\vartheta(x) \,\, \text{is symmetric and}~~|x_1|\leq |x_2|\Rightarrow\vartheta(x_1)>\vartheta(x_2);\\
$(iii)$~ \displaystyle \int_{\R^N}|\nabla \vartheta|^{2}dx=\int_{\R^N}(|x|^{-4}*|\vartheta|^{2})|\vartheta|^{2}dx>\mathcal{S}_{HL}^{2}; \\
$(iv)$~(\sqrt{k_1}\vartheta, \sqrt{k_2}\vartheta)\in \mathcal{N}_{\infty}, \ \ I_{\infty}(\sqrt{k_1}\vartheta, \sqrt{k_2}\vartheta)=\Sigma\in (c_{\infty}, \bar{c}).
  \end{cases}
\end{equation}
Moreover, for each $\delta>0$ and $y\in \R^N$, we set
\begin{equation*}
\vartheta_{\delta,y}=
  \begin{cases}
   \delta^{-\frac{N-2}{2}}\vartheta(\frac{x-y}{\delta}), &x\in B_{\delta}(y); \\
   0, \ \ &x\notin B_{\delta}(y).
  \end{cases}
\end{equation*}
We remark  that by the definition of $\vartheta_{\delta,y}$ and by variable change, it follows that for each $\delta>0$ and $y\in \R^N$,
\begin{equation}
  \|\vartheta\|_{L^{2^*}}=\|\vartheta\|_{L^{2^*}(B_1(0))}=\|\vartheta_{\delta,y}\|_{L^{2^*}(B_{\delta}(y))}=\|\vartheta_{\delta,y}\|_{L^{2^*}}.
\end{equation}

\begin{lem}\label{lm4.4}
Suppose that $(A_1)$-$(A_2)$ hold, then\\
$(a)$~$\displaystyle\limsup_{\delta\to 0}\{\int_{\R^N} V_{j}(x)|\vartheta_{\delta,y}|^{2}dx: y\in \R^N\}=0$, $j=1,2$;\\
$(b)$~$\displaystyle\limsup_{\delta \to  \infty} \{\int_{\R^N} V_{j}(x)|\vartheta_{\delta,y}|^{2}dx:y\in \R^N  \}=0 $, $j=1,2$; \\
$(c)$~$\displaystyle\limsup_{r\to  \infty} \{\int_{\R^N} V_{j}(x)|\vartheta_{\delta,y}|^{2}dx: |y|=r, \, y\in \R^N, \delta>0 \}=0$, $j=1,2$.
\end{lem}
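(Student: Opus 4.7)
All three statements rest on the same Hölder/scale-invariance machinery. Since $\vartheta\in C_0^\infty(B_1(0))$, the rescaled bubble $\vartheta_{\delta,y}$ is supported in $B_\delta(y)$, satisfies $\|\vartheta_{\delta,y}\|_{L^{2^*}}=\|\vartheta\|_{L^{2^*}}$ by scale invariance, and obeys the $L^\infty$-bound $\|\vartheta_{\delta,y}\|_\infty=\delta^{-(N-2)/2}\|\vartheta\|_\infty$. The workhorse is Hölder's inequality with exponents $N/2$ and $N/(N-2)$:
\begin{equation*}
\int_{\R^N} V_j(x)\,|\vartheta_{\delta,y}|^2\,dx \le \|V_j\,\chi_{B_\delta(y)}\|_{L^{N/2}}\,\|\vartheta\|_{L^{2^*}}^2.
\end{equation*}

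For part $(a)$, I would observe that $|B_\delta(y)|=c_N\delta^N\to 0$ as $\delta\to 0^+$ uniformly in $y$, so by the absolute continuity of the $L^{N/2}$-integral of $V_j$, the quantity $\sup_{y\in\R^N}\|V_j\,\chi_{B_\delta(y)}\|_{L^{N/2}}$ tends to $0$. This instantly yields $(a)$.

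For part $(b)$, the idea is to split the integral at a large radius $R$. Given $\eps>0$, first choose $R$ so that $\|V_j\|_{L^{N/2}(\R^N\setminus B_R(0))}\|\vartheta\|_{L^{2^*}}^2<\eps/2$ (possible since $V_j\in L^{N/2}$); this controls the outer piece uniformly in $(\delta,y)$ via Hölder. For the inner piece $\{|x|\le R\}$, I would use the $L^\infty$-bound
\begin{equation*}
\int_{|x|\le R} V_j\,|\vartheta_{\delta,y}|^2\,dx \le \delta^{-(N-2)}\,\|\vartheta\|_\infty^2 \int_{|x|\le R} V_j\,dx = C_R\,\delta^{-(N-2)},
\end{equation*}
which vanishes as $\delta\to\infty$ since $N\ge 5$; crucially the bound is independent of $y$, giving $(b)$.

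For part $(c)$, I would combine both mechanisms. Fix $\eps>0$ and pick $R$ as in $(b)$. Split in two regimes according to the size of $\delta$ relative to $|y|$. If $\delta\le |y|-R$, then $B_\delta(y)\subset\{|x|>R\}$, so the entire integral is bounded by $\|V_j\|_{L^{N/2}(|x|>R)}\|\vartheta\|_{L^{2^*}}^2<\eps/2$. If instead $\delta>|y|-R$, then $\delta$ itself is large when $|y|$ is large; the outer piece $\{|x|>R\}$ is again $<\eps/2$ by the choice of $R$, while the inner piece $\{|x|\le R\}$ is at most $C_R\,\delta^{-(N-2)}\le C_R(|y|-R)^{-(N-2)}\to 0$ as $|y|\to\infty$. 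Taking $r=|y|$ large enough to dominate both contributions proves $(c)$.

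\textbf{Main obstacle.} No single estimate is difficult in isolation; the genuine subtlety lies in making the bounds \emph{uniform in the other parameter} — uniformly in $y$ for $(a)$ and $(b)$, and uniformly in $\delta>0$ for $(c)$. In particular $(c)$ forces the dichotomy above: one must separately treat the regime where $B_\delta(y)$ sits in the tail $\{|x|>R\}$ (so $V_j$ is small on the support) and the regime where $\delta$ is comparable to $|y|$ and the ball may even swallow the origin, in which case only the $L^\infty$-decay $\delta^{-(N-2)/2}$ of the bubble, together with the assumption $N\ge 5$, rescues the estimate.
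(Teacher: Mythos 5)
Your proof is correct. Part $(a)$ is essentially identical to the paper's: Hölder with exponents $N/2$ and $N/(N-2)$, plus the observation that $\sup_{y}\|V_j\|_{L^{N/2}(B_\delta(y))}\to 0$ as $\delta\to 0$, which is just absolute continuity of the $L^1$ integral of $|V_j|^{N/2}$.

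Parts $(b)$ and $(c)$ diverge in a genuine but modest way. For $(b)$ the paper splits at a radius $\rho$ as you do, but controls the inner piece via $\|\vartheta_{\delta,y}\|_{L^{2^*}(B_\rho(0))}\to 0$ as $\delta\to\infty$ uniformly in $y$ (which follows from the same $L^\infty$ decay $\|\vartheta_{\delta,y}\|_\infty=\delta^{-(N-2)/2}\|\vartheta\|_\infty$ together with $|B_\rho(0)|<\infty$); then it lets $\rho\to\infty$ in the outer tail. You instead hit the inner piece with the $L^\infty$ bound and the local integrability $V_j\in L^1_{\mathrm{loc}}$, which is available since $V_j\in L^{N/2}(\R^N)$. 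Both are the same mechanism; yours is a bit more explicit but, as you flag, does not actually need $N\geq 5$ — it needs only $N>2$. For $(c)$ the paper argues by contradiction: a bad sequence $(\delta_n,y_n)$ with $|y_n|\to\infty$ must, by parts $(a)$ and $(b)$, have $\delta_n$ bounded above and below, so $B_{\delta_n}(y_n)$ escapes to infinity and $\|V_j\|_{L^{N/2}(B_{\delta_n}(y_n))}\to 0$. Your direct dichotomy ($\delta\leq|y|-R$ versus $\delta>|y|-R$) is cleaner and quantitatively explicit — it produces a modulus of decay in $r$, does not reuse $(a)$ and $(b)$ as black boxes, and avoids subsequence extraction. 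Both proofs are sound; yours is somewhat tighter and more self-contained on $(c)$.
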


 \begin{proof}
Let  $y\in \R^N$ be chosen arbitrarily  , then for each $\delta>0$,  by H\"{o}lder inequality we have
\begin{align*}
\int_{\R^N}V_j(x)|\vartheta_{\delta,y}(x)|^{2}dx&=\int_{B_{\delta}(y)}V_j(x)|\vartheta_{\delta,y}(x)|^{2}dx \nonumber \\
&\leq \|V_j\|_{L^{\frac{N}{2}}(B_{\delta}(y))}\|\vartheta\|_{L^{2^*}(B_{1}(0))}^{2} \leq C\|V_j\|_{L^{\frac{N}{2}}(B_{\delta}(y))},
\end{align*}
where $C$ is  a positive constant  independent of $\delta$. Then,
\begin{equation}\label{guo-e-4.17}
  \sup_{y\in \R^N} \int_{\R^N}V_j(x)|\vartheta_{\delta,y}(x)|^{2}dx
  \leq C \sup_{ y\in \R^N}  \|V_j\|_{L^{\frac{N}{2}}(B_{\delta}(y))}.
\end{equation}
Since
 \begin{equation*}
   \lim_{\delta\to0} \|V_j\|_{L^{\frac{N}{2}}(B_{\delta}(y))}=0  \,\, \text{uniformly in } \, y\in \R^N,
 \end{equation*}
then Lemma \ref{lm4.4} $(a)$ follows from \eqref{guo-e-4.17}.

To prove $(b)$, we  fix arbitrarily $y\in \R^N$ and note for  each $\rho>0$ and $\delta>0$,
\begin{align*}
  \int_{\R^N}V_j(x)|\vartheta_{\delta,y}|^{2}dx&=\int_{B_{\rho}(0)}V_j(x)|\vartheta_{\delta,y}|^{2}dx
+\int_{\R^N \setminus B_{\rho}(0)}V_j(x)|\vartheta_{\delta,y}|^{2}dx \\
  &\leq \|V_j\|_{L^{\frac{N}{2}}(B_{\rho}(0))}\|\vartheta_{\delta,y}\|_{L^{2^*}(B_{\rho}(0))}^{2}
+\|V_j\|_{L^{\frac{N}{2}}(\R^N \setminus B_{\rho}(0))}\|\vartheta_{\delta,y}\|_{L^{2^*}(\R^N \setminus B_{\rho}(0))}^{2} \\
& \leq  \|V_j\|_{L^{\frac{N}{2}}(B_{\rho}(0))}\sup_{y\in\R^N}\|\vartheta_{\delta,y}\|_{L^{2^*}(B_{\rho}(0))}^{2}
+\widetilde{C}\|V_j\|_{L^{\frac{N}{2}}(\R^N \setminus B_{\rho}(0))},
\end{align*}
in which constant $\widetilde{C}$ is independent of $\delta $ and $\rho$. By the fact that
\begin{equation*}
  \lim_{\delta \to  \infty}\|\vartheta_{\delta,y}\|_{L^{2^*}(B_{\rho}(0))}=0, \,\, \text{uniformly in} \, y \in \R^N,
\end{equation*}
then   we get for every  $\rho>0$,
\begin{equation*}
  \lim_{\delta \to  \infty}\sup_{y\in \R^N}\int_{\R^N}V_j(x)|\vartheta_{\delta,y}|^{2}dx\leq C\|V_j\|_{L^{\frac{N}{2}}(\R^N\backslash B_{\rho}(0))}.
\end{equation*}
Taking the limit $\rho \to \infty$ in the inequality above, then we prove that Lemma \ref{lm4.4} $(b)$ is true.

To verify $(c)$, we argue by contradiction and assume that there exist sequences of $\{y_n\}$ and $\{\delta_n\}$ with $\delta_{n}\in \R^+\setminus \{0\}$, such that
\begin{equation}\label{guo-e-4.18}
  |y_n|\to  \infty
\end{equation}
and
\begin{equation}\label{guo-e-4.19}
  \lim_{n\to \infty}\int_{\R^N}V_j(x)|\vartheta_{\delta_n,y_n}|^{2}dx>0.
\end{equation}
As a direct result of Lemma \ref{lm4.4} $(a)$ and Lemma \ref{lm4.4} $(b)$, we conclude that $\displaystyle\lim_{n\to \infty}\delta_{n}=\tilde{\delta}>0$. By \eqref{guo-e-4.18} and the assumption that $V_j(x)\in L^{\frac{N}{2}}(\R^N)$, we have
\begin{equation*}
  \lim_{n\to \infty} \|V_j\|_{L^{\frac{N}{2}}(B_{\delta_n}(y_n))}=0.
\end{equation*}
Furthermore, by H\"{o}lder inequality  we get
\begin{equation*}
  \lim_{n\to \infty}\int_{\R^N}V_j(x)|\vartheta_{\delta_n,y_n}|^{2}dx\leq \lim_{n\to \infty} \Big[  \|V_j\|_{L^{\frac{N}{2}}(B_{\delta_n}(y_n))} \cdot \|\vartheta_{\delta_n,y_n}\|_{L^{2^*}(B_{\delta_n}(y_n))}^{2}\Big]=0,
\end{equation*}
which contradicts with \eqref{guo-e-4.19}.  So Lemma \ref{lm4.4} $(c)$ holds true.
\end{proof}

\begin{lem}\label{lm4.5}
Let $\langle x|y\rangle_{\R^N}$ be the inner product of vector $x,y \in \R^N$. For fixed $r>0$,  the following relations hold: \\
$(a)$~$\displaystyle\lim_{\delta\to 0}\{ \gamma(\sqrt{k_1}\vartheta_{\delta,y}, \sqrt{k_2}\vartheta_{\delta,y}): y\in \R^N\}=0$;\\
$(b)$~$\displaystyle\lim_{\delta \to  \infty} \{ \gamma(\sqrt{k_1}\vartheta_{\delta,y}, \sqrt{k_2}\vartheta_{\delta,y}): |y|\leq r\}=1 $. \\
$(c)$~$\displaystyle \langle \xi( \sqrt{k_1}\vartheta_{\delta,y}, \sqrt{k_2}\vartheta_{\delta,y}) \mid y  \rangle_{\R^N}>0$, $\forall y \in \R^N\backslash\{0\}$, $\forall \delta>0$.
\end{lem}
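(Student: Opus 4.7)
The first reduction is algebraic: substituting $u=\sqrt{k_1}\vartheta_{\delta,y}$ and $v=\sqrt{k_2}\vartheta_{\delta,y}$ into the definitions of $\xi$ and $\gamma$, the constant factor $K:=\mu_1 k_1^{2^*/2}+2\beta k_1^{2^*/4}k_2^{2^*/4}+\mu_2 k_2^{2^*/2}>0$ cancels between numerator and denominator, so that
\[
\xi(\sqrt{k_1}\vartheta_{\delta,y},\sqrt{k_2}\vartheta_{\delta,y})
=\frac{1}{A}\int_{\R^N}\phi(\delta z+y)\,\vartheta^{2^*}(z)\,dz,
\quad
\gamma(\sqrt{k_1}\vartheta_{\delta,y},\sqrt{k_2}\vartheta_{\delta,y})
=\frac{1}{A}\int_{\R^N}\bigl|\phi(\delta z+y)-\xi\bigr|\,\vartheta^{2^*}(z)\,dz,
\]
where $\phi(w):=w/(1+|w|)$, $A:=\int\vartheta^{2^*}dz$, and we used the change of variables $x=\delta z+y$ together with the identity $\vartheta_{\delta,y}^{2^*}(x)\,dx=\vartheta^{2^*}(z)\,dz$ and $\mathrm{supp}\,\vartheta\subset B_1(0)$. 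All three claims are now reduced to statements about $\phi$ weighted by the fixed radial density $\vartheta^{2^*}$.

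\textbf{Part (a).} I would first observe that $\phi$ is Lipschitz on $\R^N$ with constant bounded by $2$; indeed, expanding
$\phi(w_1)-\phi(w_2)=\tfrac{(w_1-w_2)+(w_1|w_2|-w_2|w_1|)}{(1+|w_1|)(1+|w_2|)}$
and using $|w_1|w_2|-w_2|w_1||\leq 2|w_1||w_1-w_2|$ yields $|\phi(w_1)-\phi(w_2)|\leq 2|w_1-w_2|$. Consequently $|\phi(\delta z+y)-\phi(y)|\leq 2\delta$ for every $z\in B_1(0)$ and every $y\in\R^N$. This gives $\xi\to\phi(y)$ as $\delta\to 0$ uniformly in $y$, and then $\gamma\to 0$ uniformly in $y$.

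\textbf{Part (b).} Fix $|y|\leq r$. For each $z\neq 0$ one has $|\delta z+y|\to\infty$, hence $\phi(\delta z+y)\to z/|z|$ as $\delta\to\infty$. The estimate
$|\phi(w)-w/|w||=(1+|w|)^{-1}$ and $|w/|w|-z/|z||\leq 2|y|/|\delta z+y|$
shows that the convergence is uniform in $\{|z|\geq\eta,\ |y|\leq r\}$ for every $\eta>0$; combined with integrability of $\vartheta^{2^*}$ near the origin, dominated convergence (bound by $1$) yields
\[
\xi\longrightarrow\frac{1}{A}\int_{\R^N}\frac{z}{|z|}\vartheta^{2^*}(z)\,dz=0
\]
by the radial symmetry of $\vartheta$ (property (ii) of \eqref{e-4.15}), and then $\gamma\to\tfrac{1}{A}\int\vartheta^{2^*}\,dz=1$, uniformly in $|y|\leq r$.

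\textbf{Part (c).} This is the subtlest step and I expect it to be the main obstacle, since it requires identifying a genuine positivity rather than a limit. By rotational symmetry of $\vartheta$ I may assume $y=|y|e_1$. Changing variables to $w'=\delta z+y-y=\delta z$, the density becomes an even function $g(|w'|):=\delta^{-N}\vartheta^{2^*}(w'/\delta)$ and
\[
\langle\xi\mid y\rangle
=\frac{|y|}{A}\int_{\R^N}\frac{w_1'+|y|}{1+|w'+y|}\,g(|w'|)\,dw'.
\]
Symmetrizing the integrand by pairing $w'$ with $-w'$ (which leaves $g(|w'|)$ unchanged) yields
\[
\langle\xi\mid y\rangle
=\frac{|y|}{2A}\int_{\R^N}\frac{N(w',y)}{(1+|w'+y|)(1+|w'-y|)}\,g(|w'|)\,dw',
\]
with
\[
N(w',y):=2|y|+|y|\bigl(|w'-y|+|w'+y|\bigr)+w_1'\bigl(|w'-y|-|w'+y|\bigr).
\]
The reverse triangle inequality $\bigl||w'-y|-|w'+y|\bigr|\leq 2|y|$ together with $|w'-y|+|w'+y|\geq 2|w_1'|$ (easily checked since the projection of $w'\pm y$ onto $e_1$ is $w_1'\pm|y|$) gives
\[
N(w',y)\geq |y|\bigl(2+|w'-y|+|w'+y|-2|w_1'|\bigr)\geq 2|y|>0.
\]
Since $g>0$ on the support of $\vartheta_{\delta,0}$, the whole integral is strictly positive, proving $\langle\xi\mid y\rangle>0$ for all $y\neq 0$ and all $\delta>0$.
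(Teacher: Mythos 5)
Your proof is correct, and for parts (a) and (b) it is essentially the paper's argument after the harmless change of variables $x=\delta z+y$ (the paper works directly with $\vartheta_{\delta,y}$; you work with $\vartheta$ and the kernel $\phi(w)=w/(1+|w|)$, but the Lipschitz bound $|\phi(w_1)-\phi(w_2)|\le 2|w_1-w_2|$ and the splitting of the integral at $|z|\ge\eta$ versus $|z|<\eta$ play the same role as the paper's use of $|\tfrac{x}{1+|x|}-\tfrac{y}{1+|y|}|<2\delta$ for $x\in B_\delta(y)$ and its splitting at $B_\rho(0)$). Part (c), however, is genuinely different: the paper exploits the strict radial monotonicity in hypothesis (ii) of \eqref{e-4.15}, reflecting $x\mapsto -x$ and observing that $\langle x\mid y\rangle>0$ forces $|{-}x-y|>|x-y|$, hence $\vartheta_{\delta,y}(x)\ge\vartheta_{\delta,y}(-x)$ with strict inequality on a set of positive measure; the positivity of the integral then comes from the weight $\vartheta_{\delta,y}^{2^*}(x)-\vartheta_{\delta,y}^{2^*}(-x)$. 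Your argument instead symmetrizes the kernel $\phi(w'+y)$ itself and proves the purely algebraic lower bound $N(w',y)\ge 2|y|$, which uses only that $\vartheta^{2^*}$ is a nonnegative, not identically zero, \emph{radial} weight — the monotonicity assumption is not invoked at all. This is a somewhat more robust route: it isolates the source of the sign as a property of the barycenter kernel $w\mapsto w/(1+|w|)$ rather than of the particular profile $\vartheta$. The paper's argument, in exchange, is shorter and makes visible why hypothesis (ii) is listed among the requirements on $\vartheta$.
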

\begin{proof}
$(a)$ For any $\delta >0$ and $y\in \R^N$, we get
\begin{align*}
  |\frac{y}{1+|y|}-\xi(\sqrt{k_1}\vartheta_{\delta,y},\sqrt{k_2}\vartheta_{\delta,y})|
 \leq \frac{\int_{\R^N} \big|\frac{y}{1+|y|}- \frac{x}{1+|x|}\big|| \vartheta_{\delta,y}|^{2^*}dx}{\int_{\R^N}| \vartheta_{\delta,y}|^{2^*}dx}\leq 2\delta,
\end{align*}
where in the last inequality we use the property that
$
|\frac{y}{1+|y|}-\frac{x}{1+|x|}|<2\delta
$
for any $x\in B_{\delta}(y)$. Then
\begin{align*}
  0 &\leq \gamma(\sqrt{k_1}\vartheta_{\delta,y}, \sqrt{k_2}\vartheta_{\delta,y} ) \\
  &  = \frac{\int_{\R^N} \big| \frac{x}{1+|x|}-\xi(\sqrt{k_1}\vartheta_{\delta,y}, \sqrt{k_2}\vartheta_{\delta,y})\big|| \vartheta_{\delta,y}|^{2^*}dx}{\int_{\R^N}| \vartheta_{\delta,y}|^{2^*}dx} \\
  & \leq  \frac{\int_{\R^N} \big| \frac{x}{1+|x|}-\frac{y}{1+|y|}\big|| \vartheta_{\delta,y}|^{2^*}dx}{\int_{\R^N}| \vartheta_{\delta,y}|^{2^*}dx} +  \frac{\int_{\R^N} \big| \frac{y}{1+|y|}-\xi(\sqrt{k_1}\vartheta_{\delta,y}, \sqrt{k_2}\vartheta_{\delta,y})\big|| \vartheta_{\delta,y}|^{2^*}dx}{\int_{\R^N}| \vartheta_{\delta,y}|^{2^*}dx} \\
  &\leq 4\delta,
\end{align*}
which implies that
$
\lim_{\delta \to 0} \gamma(\sqrt{k_1}\vartheta_{\delta,y}, \sqrt{k_2}\vartheta_{\delta,y} )=0
$
uniformly in $y\in \R^N$.

$(b)$ Let us first show that
\begin{equation}\label{add-guo}
  \lim_{\delta \to \infty} \xi (\sqrt{k_1}\vartheta_{\delta,y}, \sqrt{k_2}\vartheta_{\delta,y})=0,
\end{equation}
uniformly for $y\in B_{r}(0)$. In fact, since $\vartheta_{\delta,0}$ is radially symmetric, then
\begin{equation*}
  \frac{\int_{\R^N} \frac{x}{1+|x|}| \vartheta_{\delta,0}|^{2^*}dx}{\int_{\R^N}| \vartheta_{\delta,0}|^{2^*}dx}=0.
\end{equation*}
So, by calculations we get
\begin{align*}
 |\xi(\sqrt{k_1}\vartheta_{\delta,y}, \sqrt{k_2}\vartheta_{\delta,y})|
  &=\Big| \frac{\int_{\R^N} \frac{x}{1+|x|} (\mu_1 k_{1}^{\frac{2^*}{2}}+2\beta k_{1}^{\frac{2^*}{4}}k_{2}^{\frac{2^*}{4}}+\mu_2 k_{2}^{\frac{2^*}{2}}) |\vartheta_{\delta,y}|^{2^*}dx}{\int_{\R^N} (\mu_1 k_{1}^{\frac{2^*}{2}}+2\beta k_{1}^{\frac{2^*}{4}}k_{2}^{\frac{2^*}{4}}+\mu_2 k_{2}^{\frac{2^*}{2}})| \vartheta_{\delta,y}|^{2^*}dx}\Big|  \\
  &=\Big| \frac{\int_{\R^N} \frac{x}{1+|x|} |\vartheta_{\delta,y}|^{2^*}dx}{\int_{\R^N} | \vartheta_{\delta,y}|^{2^*}dx}\Big|  \\
  &= \Big| \frac{\int_{\R^N} \frac{x}{1+|x|} (| \vartheta_{\delta,y}|^{2^*}-| \vartheta_{\delta,0}|^{2^*})dx}{\int_{\R^N}| \vartheta_{\delta,y}|^{2^*}dx}\Big| \\
  & \leq \frac{\int_{\R^N}  | \vartheta_{\delta,y}^{2^*}-\vartheta_{\delta,0}^{2^*}|dx}{\int_{\R^N}| \vartheta_{\delta,y}|^{2^*}dx} \\
  &\leq  \frac{1}{\int_{\R^N}| \vartheta_{1,\frac{y}{\delta}}|^{2^*}dx} \int_{\R^N}| \vartheta_{1,\frac{y}{\delta}}^{2^*}-\vartheta_{1,0}^{2^*}|dx \to 0,
\end{align*}
as $\delta \to \infty$, uniformly for $y\in B_{r}(0)$. So, \eqref{add-guo} holds true.

Next, for any $\varepsilon>0$, we fix a constant $\rho=\rho(\varepsilon)>0$ such that $\frac{1}{1+\rho}< \frac{\varepsilon}{3}$. Then for such a  fixed constant $\rho >0$, it is easy to prove that
$$
\lim_{\delta \to \infty} \int_{B_{\rho}(0)}| \vartheta_{\delta,y}|^{2^*}dx=0 
$$
uniformly in $y\in B_{r}(0)$.
Thus, there exists a constant $\delta_0>0$ such that for each $\delta>\delta_0$,
$$
\frac{1}{\|\vartheta\|_{L^{2^*}}^{2^*}} \int_{B_{\rho}(0)}| \vartheta_{\delta,y}|^{2^*}dx< \frac{\varepsilon}{3} 
$$
uniformly in $y\in B_{r}(0)$.
For fixed constant $\delta_0$, considering \eqref{add-guo}, we can also deduce that for  each $\delta \in (\delta_0, \infty)$ and $y\in B_{r}(0)$, there holds
$$
|\xi(\sqrt{k_1}\vartheta_{\delta,y}, \sqrt{k_2}\vartheta_{\delta,y})|<\frac{\varepsilon}{3}.
$$
By the definition of function $\gamma$, we have
\begin{align*}
   \gamma(\sqrt{k_1}\vartheta_{\delta,y}, \sqrt{k_2}\vartheta_{\delta,y})
  \leq 1+ |\xi(\sqrt{k_1}\vartheta_{\delta,y}, \sqrt{k_2}\vartheta_{\delta,y})|
   \leq 1+ \frac{\varepsilon}{3}.
\end{align*}
On the other hand, for the same assumptions on $\delta$ and $y$, that is $\delta \in (\delta_0,\infty)$ and $y\in B_{r}(0)$, we get
\begin{align*}
  \gamma(\sqrt{k_1}\vartheta_{\delta,y}, \sqrt{k_2}\vartheta_{\delta,y})
  & = \frac{  \int_{\R^N}\Big| \frac{x}{1+|x|}-\xi(\sqrt{k_1}\vartheta_{\delta,y}, \sqrt{k_2}\vartheta_{\delta,y}) \Big| | \vartheta_{\delta,y}|^{2^*}dx}{\int_{\R^N}|\vartheta_{\delta,y}|^{2^*}dx} \\
 & \geq \frac{  \int_{\R^N} \frac{|x|}{1+|x|}  |\vartheta_{\delta,y}|^{2^*}dx}{\int_{\R^N}|\vartheta_{\delta,y}|^{2^*}dx} -  |\xi(\sqrt{k_1}\vartheta_{\delta,y}, \sqrt{k_2}\vartheta_{\delta,y})|            \\
  & \geq \frac{  \int_{\R^N\setminus B_{\rho}(0)} \frac{|x|}{1+|x|}  |\vartheta_{\delta,y}|^{2^*}dx}{\int_{\R^N}|\vartheta_{\delta,y}|^{2^*}dx} -  |\xi(\sqrt{k_1}\vartheta_{\delta,y}, \sqrt{k_2}\vartheta_{\delta,y})|\\
  &\geq \frac{\rho}{1+\rho}-\frac{\int_{B_{\rho}(0)}|\vartheta_{\delta,y}|^{2^*}}{\int_{\R^N}|\vartheta_{\delta,y}|^{2^*}dx}-|\xi(\sqrt{k_1}\vartheta_{\delta,y}, \sqrt{k_2}\vartheta_{\delta,y})|\\
  &\geq 1-\frac{1}{1+\rho}-\frac{\varepsilon}{3}-\frac{\varepsilon}{3}  \geq 1-\varepsilon.
\end{align*}
Finally, based on the arguments above, we can easily get
$$\lim_{\delta \to  \infty} \gamma(\sqrt{k_1}\vartheta_{\delta,y}, \sqrt{k_2}\vartheta_{\delta,y})=1$$
uniformly for $y\in B_{r}(0)$, and the proof of $(b)$ is completed.

$(c)$ For any $x\in \R^N$ with $\langle x\mid y\rangle_{\R^N}>0$, there holds
$|-x-y|>|x-y|$, which together with \eqref{e-4.15}$(ii)$ implies   that $\vartheta_{\delta,y}(x)\geq \vartheta_{\delta,y}(-x)$ for any $x\in \R^N$ with $\langle x \mid y\rangle_{\R^N}>0$. Moreover,  meas $\{x\in \R^N:\langle x \mid y\rangle_{\R^N}>0, \vartheta_{\delta,y}(x)>\vartheta_{\delta,y}(-x) \}>0$. Then by computation we get for $\delta>0$ and any $y\in \R^{N}\setminus \{0\}$, 
\begin{align*}
\int_{\R^N}\frac{\langle x \mid y\rangle_{\R^N}}{1+|x|} |\vartheta_{\delta,y}|^{2^*}dx
  & =\int_{\{x\in \R^N:\langle x , y\rangle_{\R^N}>0\}}\frac{\langle x \mid y\rangle_{\R^N}}{1+|x|} |\vartheta_{\delta,y}|^{2^*}dx+
  \int_{\{x\in \R^N:\langle x \mid y\rangle_{\R^N}<0\}}\frac{\langle x \mid y\rangle_{\R^N}}{1+|x|} |\vartheta_{\delta,y}|^{2^*}dx \\
  & =\int_{\{x\in \R^N:\langle x ,y \rangle_{\R^N}>0\}} \frac{\langle x \mid y\rangle_{\R^N}}{1+|x|} \big( |\vartheta_{\delta,y}(x)|^{2^*}- |\vartheta_{\delta,y}(-x)|^{2^*} \big) dx>0,
\end{align*}
which implies that 
\begin{align*}
\langle \xi( \sqrt{k_1}\vartheta_{\delta,y}, \sqrt{k_2}\vartheta_{\delta,y})\mid y  \rangle_{\R^N}
&= \frac{\int_{\R^N} \frac{\langle x \mid y \rangle_{\R^N}}{1+|x|} (\mu_1 k_{1}^{\frac{2^*}{2}}+2\beta k_{1}^{\frac{2^*}{4}}k_{2}^{\frac{2^*}{4}}+\mu_2 k_{2}^{\frac{2^*}{2}}) |\vartheta_{\delta,y}|^{2^*}dx}{\int_{\R^N} (\mu_1 k_{1}^{\frac{2^*}{2}}+2\beta k_{1}^{\frac{2^*}{4}}k_{2}^{\frac{2^*}{4}}+\mu_2 k_{2}^{\frac{2^*}{2}}) |\vartheta_{\delta,y}|^{2^*}dx}>0
 \end{align*}
 for  any $ y \in \R^N \setminus \{0\}$ and $\delta>0$.

\end{proof}

\section{Proof of main results}\label{s5}

\qquad We first introduce the notation $I_{0}$ to define the functional $I$ with $\lambda_1=\lambda_2 =0$, that is
\begin{align*}
  I_0(u,v)&=\frac{1}{2}\int_{\R^N}\Big(|\nabla u|^{2}+|\nabla v|^{2}+ V_{1}(x)u^2+V_2(x)v^2 \Big)dx\\
  &\quad -\frac{1}{4}\int_{\R^N}\int_{\R^N}\frac{\mu_1|u^+(x)|^2|u^{+}(y)|^2+2\beta |u^+(x)|^2|v^+(y)|^2+\mu_2 |v^+(x)|^2|v^{+}(y)|^2 }{|x-y|^4}dxdy.
\end{align*}
According to the definition above, we set  Nehari manifold
\begin{equation*}
 \mathcal{ N}_{0}:=\{(u,v)\in H_{0} \setminus \{(0,0)\} : \langle I'_{0}(u,v),(u,v)    \rangle=0  \}.
\end{equation*}
For each $\vartheta_{\delta,y}$, we  set
$$
\overline{\vartheta}_{\delta,y}:=( \sqrt{k_1}\vartheta_{\delta,y},  \sqrt{k_2}\vartheta_{\delta,y})
$$
and the projections
\begin{equation}
\label{pr}
 \widetilde{\vartheta}_{\delta,y}:= t_{\delta,y}\overline{\vartheta}_{\delta,y} \in \mathcal{N},
  \qquad
 \widehat{\vartheta}_{\delta,y}:= t_{\delta,y,0}\overline{\vartheta}_{\delta,y} \in \mathcal{N}_{0},
   \qquad
   t_{\delta,y}, \,\, t_{\delta,y,0}>0.
\end{equation}
Let us define
\begin{equation*}
  \xi\circ\overline {\vartheta}_{\delta,y}:=\xi( \sqrt{k_1}\vartheta_{\delta,y},  \sqrt{k_2}\vartheta_{\delta,y}), \qquad
  \gamma \circ \overline{\vartheta}_{\delta,y}:=\gamma( \sqrt{k_1}\vartheta_{\delta,y},  \sqrt{k_2}\vartheta_{\delta,y}),
\end{equation*}
and in analogous way
$\xi\circ\widetilde  {\vartheta}_{\delta,y}$,
$\xi\circ \widehat{\vartheta}_{\delta,y}$,
 $\gamma\circ\widetilde  {\vartheta}_{\delta,y}$,
$\gamma\circ \widehat{\vartheta}_{\delta,y}$.

Observe that, by \eqref{e-4.1},
\begin{equation}\label{inv}
\xi\circ\overline {\vartheta}_{\delta,y}=\xi\circ\widetilde  {\vartheta}_{\delta,y}=\xi\circ \widehat{\vartheta}_{\delta,y},
\qquad
\gamma\circ\overline {\vartheta}_{\delta,y}=\gamma\circ\widetilde  {\vartheta}_{\delta,y}=\gamma\circ \widehat{\vartheta}_{\delta,y}
\end{equation}

\begin{lem}\label{lm5.1}
Let $t_{\delta,y,0}$ be as in \eqref{pr}, then the  following relations hold: \\
$(a)$ \,$\displaystyle \lim_{\delta \to 0} \sup\{ |t_{\delta,y,0}-1|:y\in \R^N \}=0$; \\
$(b)$ \,$\displaystyle \lim_{\delta \to \infty} \sup\{ |t_{\delta,y,0}-1|:y\in \R^N \}=0$; \\
$(c)$ \,$\displaystyle \lim_{r \to  \infty} \sup\{ |t_{\delta,y,0}-1|:y\in \R^N, |y|=r,\delta>0 \}=0$.
\end{lem}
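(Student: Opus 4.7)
The strategy is to write $t_{\delta,y,0}$ explicitly in closed form as a ratio of the quadratic part of $I_0$ over the nonlocal quartic part, evaluated at $\overline{\vartheta}_{\delta,y}$, and then invoke Lemma \ref{lm4.4} together with the scale invariance of the relevant integrals.

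The plan is as follows. First, I would use the defining condition $\widehat{\vartheta}_{\delta,y}=t_{\delta,y,0}\overline{\vartheta}_{\delta,y}\in\mathcal{N}_{0}$, i.e.\ $\langle I'_{0}(t\overline{\vartheta}_{\delta,y}),t\overline{\vartheta}_{\delta,y}\rangle=0$, to solve for $t=t_{\delta,y,0}$. Since $\overline{\vartheta}_{\delta,y}\geq 0$, this gives
\begin{equation*}
t_{\delta,y,0}^{2}=\frac{(k_{1}+k_{2})\int_{\R^{N}}|\nabla\vartheta_{\delta,y}|^{2}dx+\int_{\R^{N}}(k_{1}V_{1}(x)+k_{2}V_{2}(x))|\vartheta_{\delta,y}|^{2}dx}{(\mu_{1}k_{1}^{2}+2\beta k_{1}k_{2}+\mu_{2}k_{2}^{2})\int_{\R^{N}}\int_{\R^{N}}\frac{|\vartheta_{\delta,y}(x)|^{2}|\vartheta_{\delta,y}(y)|^{2}}{|x-y|^{4}}dxdy}.
\end{equation*}

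Next I would simplify the denominator using the algebraic identity $\mu_{1}k_{1}^{2}+2\beta k_{1}k_{2}+\mu_{2}k_{2}^{2}=k_{1}+k_{2}$ (already noted in the proof of Lemma \ref{lm2.6}) and the scale invariance $\int|\nabla\vartheta_{\delta,y}|^{2}dx=\int|\nabla\vartheta|^{2}dx$ and $\int\!\!\int\frac{|\vartheta_{\delta,y}(x)|^{2}|\vartheta_{\delta,y}(y)|^{2}}{|x-y|^{4}}dxdy=\int\!\!\int\frac{|\vartheta(x)|^{2}|\vartheta(y)|^{2}}{|x-y|^{4}}dxdy$, combined with the equality $\int|\nabla\vartheta|^{2}dx=\int\!\!\int\frac{|\vartheta(x)|^{2}|\vartheta(y)|^{2}}{|x-y|^{4}}dxdy$ from property (iii) of \eqref{e-4.15}. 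This collapses the expression to
\begin{equation*}
t_{\delta,y,0}^{2}-1=\frac{\int_{\R^{N}}(k_{1}V_{1}(x)+k_{2}V_{2}(x))|\vartheta_{\delta,y}|^{2}dx}{(k_{1}+k_{2})\int_{\R^{N}}|\nabla\vartheta|^{2}dx},
\end{equation*}
so the whole question reduces to controlling $\int V_{j}|\vartheta_{\delta,y}|^{2}dx$ uniformly in the relevant variable.

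Finally I would conclude by applying Lemma \ref{lm4.4}: part (a) gives $\sup_{y\in\R^{N}}\int V_{j}|\vartheta_{\delta,y}|^{2}dx\to 0$ as $\delta\to 0$, yielding statement (a) of Lemma \ref{lm5.1}; part (b) of Lemma \ref{lm4.4} yields statement (b); and part (c) of Lemma \ref{lm4.4} yields statement (c). In each regime $t_{\delta,y,0}^{2}\to 1$ uniformly, and since $t_{\delta,y,0}>0$ this forces $t_{\delta,y,0}\to 1$ uniformly. There is no serious obstacle here: the only mildly delicate point is verifying that the scale invariance and the algebraic identity eliminate everything except the $V_{j}$-term, so that Lemma \ref{lm4.4} applies directly without any further estimate on the gradient or Riesz energies.
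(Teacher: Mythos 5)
Your proof is correct and takes essentially the same route as the paper: both use the two Nehari conditions (for $\mathcal{N}_{\infty}$ via $\overline{\vartheta}_{\delta,y}\in\mathcal{N}_{\infty}$, and for $\mathcal{N}_{0}$ via $\widehat{\vartheta}_{\delta,y}\in\mathcal{N}_{0}$) together with scale invariance, property (iii) of \eqref{e-4.15}, and the identity $\mu_1k_1^2+2\beta k_1k_2+\mu_2k_2^2=k_1+k_2$ to reduce $t_{\delta,y,0}^2-1$ to a ratio whose numerator is $\int(k_1V_1+k_2V_2)|\vartheta_{\delta,y}|^2dx$, and then conclude by Lemma \ref{lm4.4}. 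You simply write the closed form for $t_{\delta,y,0}^2$ a bit more explicitly than the paper, which instead equates the two expressions for $1$ directly.
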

\begin{proof}
By \eqref{e-4.15} and the definition of $\vartheta_{\delta,y}$, we get $(\sqrt{k_1}\vartheta_{\delta,y}, \sqrt{k_2}\vartheta_{\delta,y})\in \mathcal{N}_{\infty}$. Then by calculation  we have
\begin{align*}
1&=\frac{\int_{\R^N}\big(|\nabla (\sqrt{k_1}\vartheta_{\delta,y})|^{2}+|\nabla (\sqrt{k_2}\vartheta_{\delta,y})|^{2}\big)dx}{\int_{\R^N}(\mu_1k_1^2+\mu_2k_2^2+2\beta k_1k_2)(|x|^{-4}*|\vartheta_{\delta,y}|^{2}) |\vartheta_{\delta,y}|^{2} dx}\\
&=\frac{t_{\delta,y,0}^{2}\int_{\R^N}(\mu_1k_1^2+\mu_2k_2^2+2\beta k_1k_2)(|x|^{-4}*|\vartheta_{\delta,y}|^{2}) |\vartheta_{\delta,y}|^{2} dx-\int_{\R^N}\Big( k_1V_{1}(x)+k_2V_2(x)\Big)|\vartheta_{\delta,y}|^2 dx }{\int_{\R^N}(\mu_1k_1^2+\mu_2k_2^2+2\beta k_1k_2)(|x|^{-4}*|\vartheta_{\delta,y}|^{2}) |\vartheta_{\delta,y}|^{2} dx  }.
\end{align*}
Obviously, the properties $(a)$-$(c)$ hold true by Lemma \ref{lm4.4}.
\end{proof}

\begin{lem}\label{lm5.2}
If $(A_1)$ and $(A_2)$ hold, then there exist constants $\bar{r}>0$ and $0<\delta_1<\frac{1}{2}<\delta_2$ such that \\
\begin{equation}\label{e-5.1}
\gamma \circ \widehat{\vartheta}_{\delta_1,y}<\frac{1}{2}, \ \forall y\in \R^N,
\end{equation}
\begin{equation}\label{e-5.1-b}
\gamma \circ \widehat{\vartheta}_{\delta_2,y}>\frac{1}{2}, \ \ \forall y\in \R^N, \ \ |y|<\bar{r}
\end{equation}
and
\begin{equation}\label{e-5.2}
  \sup\{I_{0} \circ\widehat{\vartheta}_{\delta,y} :(\delta,y) \in \partial \mathscr{H} \}<\bar{c},
\end{equation}
where $\bar{c}$ is defined in \eqref{e-4.14} and
\begin{equation*}
\mathscr{H}:=\{(\delta,y)\in \R^{+}\times \R^{N} : \delta\in [\delta_1,\delta_2], \,\, |y|<\bar{r}\}.
\end{equation*}
\end{lem}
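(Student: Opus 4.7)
The starting point is to simplify $I_0(\widehat{\vartheta}_{\delta,y})$ using the Nehari constraint. Since $\widehat{\vartheta}_{\delta,y}=t_{\delta,y,0}\overline{\vartheta}_{\delta,y}\in\mathcal{N}_0$, on $\mathcal{N}_0$ one has $I_0(u,v)=\tfrac14\bigl[\int(|\nabla u|^2+|\nabla v|^2)+\int(V_1u^2+V_2v^2)\bigr]$, hence
\begin{equation*}
I_0(\widehat{\vartheta}_{\delta,y})=\frac{t_{\delta,y,0}^{2}}{4}\Big[(k_1+k_2)\int_{\R^N}|\nabla \vartheta|^2\,dx+\int_{\R^N}(k_1V_1+k_2V_2)|\vartheta_{\delta,y}|^2\,dx\Big].
\end{equation*}
Using \eqref{e-4.15}(iv) and the fact that $(\sqrt{k_1}\vartheta,\sqrt{k_2}\vartheta)\in\mathcal{N}_\infty$, one finds $(k_1+k_2)\int|\nabla\vartheta|^2=4\Sigma$, so
\begin{equation*}
I_0(\widehat{\vartheta}_{\delta,y})=t_{\delta,y,0}^{2}\,\Sigma+\frac{t_{\delta,y,0}^{2}}{4}\int_{\R^N}(k_1V_1+k_2V_2)|\vartheta_{\delta,y}|^2\,dx.
\end{equation*}
Since $\Sigma<\bar c$ by \eqref{e-4.14} and \eqref{e-4.15}(iv), establishing the energy bound in \eqref{e-5.2} reduces to controlling the correction term $\int V_j|\vartheta_{\delta,y}|^2$ and showing $t_{\delta,y,0}\to 1$ under the appropriate limits.

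The plan is then to choose $\delta_1$, $\bar r$, $\delta_2$ in this specific order. First, as $\delta\to 0$, Lemma \ref{lm4.5}(a) together with \eqref{inv} gives $\gamma\circ\widehat{\vartheta}_{\delta,y}\to 0$ uniformly in $y\in\R^N$, while Lemma \ref{lm4.4}(a) and Lemma \ref{lm5.1}(a) give $I_0\circ\widehat{\vartheta}_{\delta,y}\to\Sigma$ uniformly in $y\in\R^N$; thus one can pick $\delta_1\in(0,\tfrac12)$ so that $\gamma\circ\widehat{\vartheta}_{\delta_1,y}<\tfrac12$ and $I_0\circ\widehat{\vartheta}_{\delta_1,y}<\bar c$ for every $y\in\R^N$, which in particular yields \eqref{e-5.1}. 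Next, using Lemma \ref{lm4.4}(c) together with the argument underlying Lemma \ref{lm5.1}(c), both $\int V_j|\vartheta_{\delta,y}|^2\to 0$ and $t_{\delta,y,0}\to 1$ as $|y|\to\infty$, \emph{uniformly in $\delta>0$}; this furnishes $\bar r>0$ such that $I_0\circ\widehat{\vartheta}_{\delta,y}<\bar c$ for every $\delta>0$ and every $y$ with $|y|=\bar r$. Finally, with this $\bar r$ fixed, Lemma \ref{lm4.5}(b) provides $\gamma\circ\widehat{\vartheta}_{\delta,y}\to 1$ uniformly in $|y|\le\bar r$ as $\delta\to\infty$, while Lemma \ref{lm4.4}(b) and Lemma \ref{lm5.1}(b) give $I_0\circ\widehat{\vartheta}_{\delta,y}\to\Sigma$ uniformly in $y\in\R^N$; choosing $\delta_2>\tfrac12$ large enough then produces \eqref{e-5.1-b} together with $I_0\circ\widehat{\vartheta}_{\delta_2,y}<\bar c$ for all $y\in\R^N$.

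To complete the argument, note that $\partial\mathscr{H}$ decomposes into the three faces $\{\delta_1\}\times\overline{B_{\bar r}(0)}$, $\{\delta_2\}\times\overline{B_{\bar r}(0)}$, and $[\delta_1,\delta_2]\times\{|y|=\bar r\}$, and the bound $I_0\circ\widehat{\vartheta}_{\delta,y}<\bar c$ in \eqref{e-5.2} is then guaranteed on each of them by the three steps above. The main obstacle is the third face, on which $\delta$ ranges over the entire compact interval $[\delta_1,\delta_2]$: only the uniformity in $\delta>0$ built into Lemma \ref{lm4.4}(c) allows the bound there to be inherited from a single choice of $\bar r$. This is precisely why $\bar r$ must be fixed after $\delta_1$ (so that $\delta=\delta_1$ already lies in the range $\delta>0$ on which the uniform estimate applies) but before $\delta_2$ (so that $\delta_2$ can subsequently be taken as large as needed without disturbing $\bar r$).
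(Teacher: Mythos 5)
Your proof is correct and follows essentially the same route as the paper: pick $\delta_1$ small via Lemmas \ref{lm4.4}(a), \ref{lm4.5}(a), \ref{lm5.1}(a) and the invariances \eqref{inv}, then $\bar r$ large via Lemmas \ref{lm4.4}(c), \ref{lm5.1}(c), then $\delta_2$ large via Lemmas \ref{lm4.4}(b), \ref{lm4.5}(b), \ref{lm5.1}(b), and check \eqref{e-5.2} face by face on $\partial\mathscr{H}$. The opening reduction $I_0\circ\widehat{\vartheta}_{\delta,y}=t_{\delta,y,0}^2\,\Sigma+\tfrac{t_{\delta,y,0}^2}{4}\int(k_1V_1+k_2V_2)|\vartheta_{\delta,y}|^2$ via the Nehari identity is a clean way to exhibit the limiting value $\Sigma<\bar c$; the only slight overstatement is in the closing remark, since Lemma \ref{lm4.4}(c) is uniform over \emph{all} $\delta>0$, so $\bar r$ need not be chosen after $\delta_1$ — the one genuine ordering constraint is that $\bar r$ be fixed before $\delta_2$.
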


\begin{proof}
For every $\delta>0$ and $y\in \R^N$, by calculation we have
\begin{align*}
  I_0\circ \widehat{\vartheta}_{\delta,y}&=I_{0}(t_{\delta,y,0}\sqrt{k_1}\vartheta_{\delta,y},t_{\delta,y,0}\sqrt{k_2}\vartheta_{\delta,y}) \\
  &=(k_1+k_2)t_{\delta,y,0}^{2} \int_{\R^N}|\nabla \vartheta_{\delta,y}|^{2}dx+ t_{\delta,y,0}^{2} \int_{\R^N}\big( k_1V_{1}(x)+k_2V_2(x) \big)|\vartheta_{\delta,y}|^{2}dx \\
  &  \quad -t_{\delta,y,0}^{4} \int_{\R^N}\big(\mu_1k_1^{2}+2\beta k_1k_2+\mu_2k_2^{2}\big)(|x|^{-4}*|\vartheta_{\delta,y}|^{2})|\vartheta_{\delta,y}|^{2} dx\\
  &=(k_1+k_2)t_{\delta,y,0}^{2} \int_{\R^N}|\nabla \vartheta_{\delta,y}|^{2}dx+   t_{\delta,y,0}^{2}  \int_{\R^N}\big( k_1V_{1}(x)+k_2V_2(x) \big)|\vartheta_{\delta,y}|^{2}dx \\
 &  \quad -(k_1+k_2)t_{\delta,y,0}^{4} \int_{\R^N}(|x|^{-4}*|\vartheta_{\delta,y}|^{2})|\vartheta_{\delta,y}|^{2} dx.
\end{align*}

 By  \eqref{inv}, Lemma \ref{lm4.4}$(a)$, Lemma \ref{lm4.5}$(a)$, Lemma \ref{lm5.1}$(a)$ and \eqref{e-4.15}, we prove that there exists $\delta_1\in (0,\frac{1}{2})$ such that $\gamma \circ \widehat{\vartheta}_{\delta_1,y}<\frac{1}{2}$ and $I_{0}\circ \widehat{\vartheta}_{\delta_1,y}<\bar{c}$ for every $y\in \R^N$. 
 
 Moreover, by Lemma \ref{lm4.4}$(c)$, Lemma \ref{lm5.1}$(c)$ and \eqref{e-4.15} again, we can choose $\bar{r}>0$ such that, if $|y|=\bar{r}$, then $I_{0}\circ \widehat{\vartheta}_{\delta,y}<\bar{c}$ is satisfied for every $\delta>0$. 
 
 In the end, once $\bar{r}$ is fixed, then the existence of $\delta_2>\frac{1}{2}$ such that $\gamma \circ \widehat{\vartheta}_{\delta_2,y}>\frac{1}{2}$  and   $I_{0}\circ \widehat{\vartheta}_{\delta_2,y}<\bar{c}$ for all $y\in \R^N$ with $|y|\leq \bar{r}$,  follows from \eqref{inv}, Lemma \ref{lm4.4}$(b)$, Lemma \ref{lm4.5}$(b)$, Lemma \ref{lm5.1}$(b)$ and \eqref{e-4.15}.

\end{proof}

\begin{lem}\label{lm5.3}
 Let $\delta_1,\delta_2,\bar{r}$ and $\mathscr{H}$ be defined as Lemma \ref{lm5.2}. Then there exist $(\widetilde{\delta},\widetilde{y})\in \partial \mathscr{H}$ and
 $(\bar{\delta},\bar{y})\in  \mathring{\mathscr{H}}$ such that
 \begin{equation}\label{e-5.3}
\xi(\sqrt{k_1}\vartheta_{\widetilde{\delta},\widetilde{y}}, \sqrt{k_2}\vartheta_{\widetilde{\delta},\widetilde{y}}  )=0, \qquad \gamma(\sqrt{k_1}\vartheta_{\widetilde{\delta},\widetilde{y}}, \sqrt{k_2}\vartheta_{\widetilde{\delta},\widetilde{y}}  )> \frac{1}{2}
 \end{equation}
and
 \begin{equation}\label{e-5.4}
\xi(\sqrt{k_1}\vartheta_{\bar{\delta},\bar{y}}, \sqrt{k_2}\vartheta_{\bar{\delta},\bar{y}}  )=0, \qquad \gamma(\sqrt{k_1}\vartheta_{\bar{\delta},\bar{y}}, \sqrt{k_2}\vartheta_{\bar{\delta},\bar{y}}  )=\frac{1}{2}.
 \end{equation}
\end{lem}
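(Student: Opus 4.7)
The plan is to produce $(\tilde\delta,\tilde y)$ by a direct symmetry argument and to obtain $(\bar\delta,\bar y)$ by a Brouwer degree argument on $\mathscr{H}$. For the first point, set $\tilde\delta=\delta_2$ and $\tilde y=0$; then $(\tilde\delta,\tilde y)\in\partial\mathscr H$ because $\delta_2$ is an endpoint of $[\delta_1,\delta_2]$. Since $\vartheta_{\delta_2,0}$ is radially symmetric by \eqref{e-4.15}(ii), the integrand in the numerator of $\xi\circ\bar\vartheta_{\delta_2,0}$ is an odd vector times an even density, so $\xi(\sqrt{k_1}\vartheta_{\tilde\delta,\tilde y},\sqrt{k_2}\vartheta_{\tilde\delta,\tilde y})=0$. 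The inequality $\gamma>\tfrac12$ then follows from \eqref{inv} and \eqref{e-5.1-b} evaluated at $y=0$.

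For the second point, I would define $F:\overline{\mathscr H}\to\R^{N+1}$ by
$$F(\delta,y):=\bigl(\xi\circ\bar\vartheta_{\delta,y},\;\gamma\circ\bar\vartheta_{\delta,y}-\tfrac12\bigr),$$
and compare it by an affine homotopy to the simple ``test map'' $G(\delta,y):=(y,\delta-\tfrac12)$. The map $G$ has the unique zero $(\tfrac12,0)\in\mathring{\mathscr H}$ (recall $\delta_1<\tfrac12<\delta_2$ and $|0|<\bar r$) with invertible Jacobian, so $\deg(G,\mathring{\mathscr H},0)=\pm 1$. Setting $H_t:=(1-t)F+tG$ for $t\in[0,1]$, by homotopy invariance it suffices to check $H_t\ne 0$ on $\partial\mathscr H$ for every $t$; then $\deg(F,\mathring{\mathscr H},0)=\pm 1\ne 0$, so $F$ vanishes at some interior point, giving $(\bar\delta,\bar y)$ as required.

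Decompose $\partial\mathscr H$ into the two caps $\{\delta_i\}\times\overline{B_{\bar r}(0)}$ and the lateral part $[\delta_1,\delta_2]\times\partial B_{\bar r}(0)$. On $\{\delta_1\}\times\overline{B_{\bar r}(0)}$ the last component of $H_t$ equals $(1-t)(\gamma\circ\bar\vartheta_{\delta_1,y}-\tfrac12)+t(\delta_1-\tfrac12)$, which is strictly negative since both summands are, using \eqref{e-5.1}, \eqref{inv} and $\delta_1<\tfrac12$. On $\{\delta_2\}\times\overline{B_{\bar r}(0)}$, continuity of $\gamma\circ\bar\vartheta_{\delta_2,\cdot}$ together with \eqref{e-5.1-b} gives $\gamma\circ\bar\vartheta_{\delta_2,y}\ge\tfrac12$ on the closure, so the last component of $H_t$ is $\ge t(\delta_2-\tfrac12)>0$ for $t\in(0,1]$; at $t=0$ the vector $F$ is nonzero because either $\gamma-\tfrac12>0$ (when $|y|<\bar r$) or $\xi\ne 0$ by Lemma \ref{lm4.5}(c) (when $|y|=\bar r\ne 0$). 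Finally, on $[\delta_1,\delta_2]\times\partial B_{\bar r}(0)$ I take the $\R^N$-inner product of the first $N$ components of $H_t$ with $y$:
$$\bigl\langle (1-t)\xi\circ\bar\vartheta_{\delta,y}+ty,\;y\bigr\rangle_{\R^N}=(1-t)\langle \xi\circ\bar\vartheta_{\delta,y}\mid y\rangle_{\R^N}+t|y|^2,$$
which is strictly positive by Lemma \ref{lm4.5}(c), hence the first $N$ components of $H_t$ cannot simultaneously vanish.

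The main obstacle I anticipate is precisely the bookkeeping at the ``corners'' $\{\delta_i\}\times\partial B_{\bar r}(0)$, where the vertical inequality on $\gamma-\tfrac12$ may degenerate to equality; the strict sign $\langle\xi\mid y\rangle_{\R^N}>0$ from Lemma \ref{lm4.5}(c) is exactly the ingredient that rescues the argument there, while the choice $\delta_1<\tfrac12<\delta_2$ ensures that the vertical part of $G$ is compatible in sign with the vertical part of $F$ on each cap, so the homotopy never crosses zero.
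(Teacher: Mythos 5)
Your proposal is correct and follows essentially the same route as the paper. For $(\widetilde\delta,\widetilde y)$ you use the same choice $(\delta_2,0)$ with the radial symmetry of $\vartheta_{\delta_2,0}$ giving $\xi=0$ and Lemma~\ref{lm5.2} plus \eqref{inv} giving $\gamma>\tfrac12$. For $(\bar\delta,\bar y)$ the paper deforms the pair $\Theta(\delta,y)=(\gamma\circ\overline\vartheta_{\delta,y},\,\xi\circ\overline\vartheta_{\delta,y})$ to the identity map via the affine homotopy $\mathcal T(\delta,y,s)=(1-s)(\delta,y)+s\Theta(\delta,y)$ and shows $\mathcal T\neq(\tfrac12,0)$ on $\partial\mathscr H$ using exactly the same three ingredients (the $\gamma<\tfrac12$ bound on $\{\delta_1\}\times\overline{B_{\bar r}}$, the $\gamma>\tfrac12$ bound on $\{\delta_2\}\times B_{\bar r}$, and $\langle\xi\mid y\rangle_{\R^N}>0$ on the lateral part); your $F=(\xi,\gamma-\tfrac12)$ versus $G=(y,\delta-\tfrac12)$ with target $0$ is the same computation after a translation and a reordering of coordinates. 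The only genuine addition in your writeup is the explicit treatment of the corner set $\{\delta_2\}\times\partial B_{\bar r}(0)$: the paper's argument on its piece $\mathscr H_2$ relies on \eqref{e-5.1-b}, which is stated only for $|y|<\bar r$, and silently lets the corner fall under the $\mathscr H_3$ estimate, whereas you spell out that at $t=0$ one switches from the sign of $\gamma-\tfrac12$ to the nonvanishing of $\xi$ there. That is a tidier presentation of the same argument, not a different one.
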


\begin{proof}
  By the symmetric property of $\vartheta_{\delta_2,0}$, we have $ \xi(\sqrt{k_1}\vartheta_{\delta_2,0}, \sqrt{k_2}\vartheta_{\delta_2,0}  )=0$.   Moreover, by  Lemma \ref{lm5.2} together with \eqref{inv}, we get $\gamma(\sqrt{k_1}\vartheta_{\delta_2,0}, \sqrt{k_2}\vartheta_{\delta_2,0}  )>\frac{1}{2}$. Hence, \eqref{e-5.3} is trivial  if we choose $(\widetilde{\delta},\widetilde{y})=(\delta_2,0)$.

For any $(\delta,y)\in \mathscr{H}$, we set
\begin{equation*}
  \Theta(\delta,y)=\big(\gamma(\sqrt{k_1}\vartheta_{\delta,y}, \sqrt{k_2}\vartheta_{\delta,y}),  \,\,  \xi(\sqrt{k_1}\vartheta_{\delta,y}, \sqrt{k_2}\vartheta_{\delta,y}  )\big)
\end{equation*}
and denote  the homotopy map $\mathcal{T}: [0,1] \times \partial \mathscr{H} \to \R \times \R^N$ as following
\begin{equation}\label{e-5.5}
  \mathcal{T}(\delta,y,s)=(1-s)(\delta,y)+s \Theta(\delta,y).
\end{equation}
To prove \eqref{e-5.4}, it is enough to prove that
\begin{equation}\label{e-5.6}
deg(\Theta, \mathring{\mathscr{H}}, (\frac{1}{2},0))=1.
\end{equation}
We remark that $deg(Id, \mathring{\mathscr{H}}, (\frac{1}{2},0))=1$,  if we can prove that for each $(\delta,y)\in \partial \mathscr{H}$ and $s\in [0,1]$, $\mathcal{T}(\delta,y,s)\neq (\frac{1}{2},0)$,  then  \eqref{e-5.6} follows easily from   the  topological degree homotopy invariance. 

Next,  we are going to verify that for any $(\delta,y)\in \partial \mathscr{H}$ and $ s\in [0,1]$,
\begin{equation}
  \big( (1-s)\delta+s\gamma(\sqrt{k_1}\vartheta_{\delta,y}, \sqrt{k_2}\vartheta_{\delta,y}), (1-s)y+s\xi(\sqrt{k_1}\vartheta_{\delta,y}, \sqrt{k_2}\vartheta_{\delta,y})   \big)\neq (\frac{1}{2},0).
\end{equation}
 Let $\partial \mathscr{H}=\mathscr{H}_{1}\cup \mathscr{H}_{2} \cup \mathscr{H}_{3}$ with
\begin{align*}
  \mathscr{H}_{1}&=\{(\delta,y)\in \partial \mathscr{H} :|y|\leq \bar{r}, \ \delta=\delta_1  \}, \\
  \mathscr{H}_{2}&=\{(\delta,y)\in \partial \mathscr{H} :|y|\leq \bar{r}, \ \delta=\delta_2  \}, \\
  \mathscr{H}_{3}&=\{(\delta,y)\in \partial \mathscr{H} :|y|=\bar{r}, \    \delta\in [\delta_1, \delta_2]  \}.
\end{align*}
If $(\delta,y)\in \mathscr{H}_{1}$, then by \eqref{inv}-\eqref{e-5.1}, we get
\begin{equation*}
(1-s)\delta_1+s\gamma(\sqrt{k_1}\vartheta_{\delta_1,y}, \sqrt{k_2}\vartheta_{\delta_1,y} )<\frac{1}{2}(1-s)+\frac{s}{2}<\frac{1}{2}.
\end{equation*}
If $(\delta,y)\in \mathscr{H}_{2}$, then by \eqref{inv} and \eqref{e-5.1-b},
\begin{equation*}
(1-s)\delta_2+s\gamma(\sqrt{k_1}\vartheta_{\delta_2,y}, \sqrt{k_2}\vartheta_{\delta_2,y} )>\frac{1}{2}(1-s)+\frac{s}{2}>\frac{1}{2}.
\end{equation*}
If $(\delta,z)\in \mathscr{H}_{3}$, then from Lemma \ref{lm4.5}$(c)$ we observe that
\begin{equation*}
\langle (1-s)y+s \xi(\sqrt{k_1}\vartheta_{\delta,y}, \sqrt{k_2}\vartheta_{\delta,y} ) \mid y   \rangle_{\R^N}=(1-s)|y|^2+s\langle \xi(\sqrt{k_1}\vartheta_{\delta,y}, \sqrt{k_2}\vartheta_{\delta,y} ) \mid y\rangle_{\R^N} >0,
\end{equation*}
which implies that
$$
(1-s)y+s\xi(\sqrt{k_1}\vartheta_{\delta,y}, \sqrt{k_2}\vartheta_{\delta,y} )\neq 0.
$$
\end{proof}

\begin{lem}\label{lm5.4}
Let $\delta_1$, $\delta_2$, $\bar{r}$ and $\mathscr{H}$ be defined as Lemma \ref{lm5.2}.  Assume that $(A_3)$ holds, then
\begin{equation}\label{e-5.8}
 \mathcal{K}=\sup\{I_{0}\circ \widehat{\vartheta}_{\delta,y}: (\delta,y ) \in \mathscr{H}\}< \min\{\frac{\mathcal{S}_{HL}^{2}}{4\mu_1}, \frac{\mathcal{S}_{HL}^{2}}{4\mu_2},2c_{\infty}\}.
\end{equation}
\end{lem}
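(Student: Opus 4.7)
The plan is to convert $I_0\circ\widehat{\vartheta}_{\delta,y}$ into a closed-form expression via the Nehari projection, then bound the potential term by H\"older and Sobolev, and finally play off the parameters $a$ from \eqref{guo-e-4.15} and $\bar c$ from \eqref{e-4.14}, whose choices have been calibrated so that two competing factors of $2$ cancel at the end.

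First, since $\widehat{\vartheta}_{\delta,y}=t_{\delta,y,0}(\sqrt{k_1}\vartheta_{\delta,y},\sqrt{k_2}\vartheta_{\delta,y})\in\mathcal{N}_{0}$, the Nehari identity yields
\[
I_0\circ\widehat{\vartheta}_{\delta,y}=\frac{t_{\delta,y,0}^{2}}{4}\bigl[(k_1+k_2)\|\nabla\vartheta\|_{L^2}^{2}+\mathcal{B}(\delta,y)\bigr],\qquad \mathcal{B}(\delta,y):=\int_{\R^N}(k_1V_1+k_2V_2)|\vartheta_{\delta,y}|^{2}\,dx.
\]
Using $\mu_1k_1^2+\mu_2k_2^2+2\beta k_1k_2=k_1+k_2$ together with \eqref{e-4.15}$(iii)$, a direct computation of the Nehari constant gives $t_{\delta,y,0}^{2}=1+\mathcal{B}(\delta,y)/\mathcal{A}$, where $\mathcal{A}:=(k_1+k_2)\|\nabla\vartheta\|_{L^2}^{2}=4\Sigma$ is independent of $(\delta,y)$ by scale invariance. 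Hence $I_0\circ\widehat{\vartheta}_{\delta,y}=(t_{\delta,y,0}^{2})^{2}\mathcal{A}/4$, reducing the problem to a uniform bound on $t_{\delta,y,0}^{2}$.

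Next, I estimate $\mathcal{B}$ by H\"older's inequality and the Sobolev embedding $\|\vartheta_{\delta,y}\|_{L^{2^*}}^{2}=\|\vartheta\|_{L^{2^*}}^{2}\leq\mathcal{S}^{-1}\|\nabla\vartheta\|_{L^2}^{2}$, which gives
\[
\frac{\mathcal{B}(\delta,y)}{\mathcal{A}}\leq\frac{k_1\|V_1\|_{L^{N/2}}+k_2\|V_2\|_{L^{N/2}}}{\mathcal{S}(k_1+k_2)}.
\]
I then invoke \eqref{guo-e-4.15}: using $k_1/(k_1+k_2)=(\beta-\mu_2)/(2\beta-\mu_1-\mu_2)$, $k_2/(k_1+k_2)=(\beta-\mu_1)/(2\beta-\mu_1-\mu_2)$ and the identity $\mathcal{S}_{HL}=C(N,4)^{-1/2}\mathcal{S}$, the equality \eqref{guo-e-4.15} translates precisely into
\[
\frac{k_1\|V_1\|_{L^{N/2}}+k_2\|V_2\|_{L^{N/2}}}{\mathcal{S}(k_1+k_2)}=2^{-(1-a)/2}\mathcal{D}-1,\qquad \mathcal{D}:=\min\Bigl\{\sqrt{\tfrac{\beta^2-\mu_1\mu_2}{\mu_1(2\beta-\mu_1-\mu_2)}},\sqrt{\tfrac{\beta^2-\mu_1\mu_2}{\mu_2(2\beta-\mu_1-\mu_2)}},\sqrt{2}\Bigr\}.
\]
Combining these two displays yields $t_{\delta,y,0}^{2}\leq 2^{-(1-a)/2}\mathcal{D}$ uniformly in $(\delta,y)\in\mathscr{H}$.

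Finally, putting everything together and using $\mathcal{A}=4\Sigma<4\bar c$ from \eqref{e-4.15}$(iv)$, I obtain
\[
I_0\circ\widehat{\vartheta}_{\delta,y}\leq 2^{-(1-a)}\mathcal{D}^{2}\,\Sigma<2^{-(1-a)}\mathcal{D}^{2}\,\bar c,
\]
and then \eqref{e-4.14} gives $\bar c<2^{1-a}c_\infty$, so the right-hand side is strictly less than $\mathcal{D}^{2}c_\infty$. The identity $c_\infty=(k_1+k_2)\mathcal{S}_{HL}^{2}/4$ precisely produces $\mathcal{D}^{2}c_\infty=\min\{\mathcal{S}_{HL}^{2}/(4\mu_1),\mathcal{S}_{HL}^{2}/(4\mu_2),2c_\infty\}$, giving $\mathcal{K}\leq 2^{-(1-a)}\mathcal{D}^{2}\bar c<\min\{\mathcal{S}_{HL}^{2}/(4\mu_1),\mathcal{S}_{HL}^{2}/(4\mu_2),2c_\infty\}$. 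The main delicacy of the argument is not any single inequality but the numerical bookkeeping: the factor $2^{-(1-a)/2}$ from \eqref{guo-e-4.15}, the opposite factor $2^{1-a}$ from \eqref{e-4.14}, and the $C(N,4)^{-1/2}$ conversion between $\mathcal{S}$ and $\mathcal{S}_{HL}$ must all line up so that $\mathcal{D}^{2}$ matches $\min\{\cdots\}/c_\infty$ exactly.
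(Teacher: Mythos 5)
Your proof is correct and follows essentially the same route as the paper: both reduce $I_0\circ\widehat{\vartheta}_{\delta,y}$ to a power of the Nehari factor $t_{\delta,y,0}^2$ times $\Sigma$, bound the potential term by H\"older and the Sobolev embedding, and then invoke \eqref{guo-e-4.15} together with $\Sigma<\bar c<2^{1-a}c_\infty$ to land exactly on $\mathcal{D}^2 c_\infty=\min\{\mathcal{S}_{HL}^2/(4\mu_1),\mathcal{S}_{HL}^2/(4\mu_2),2c_\infty\}$. The only cosmetic difference is that you record the exact identities $t_{\delta,y,0}^2=1+\mathcal{B}/\mathcal{A}$ and $I_0\circ\widehat{\vartheta}_{\delta,y}=(t_{\delta,y,0}^2)^2\Sigma$ before estimating, whereas the paper bounds both factors separately en route to the same $(1+E)^2\Sigma$.
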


\begin{proof}
For each $(\delta,y)\in \mathscr{H}$,   by the definition of $\widehat{\vartheta}_{\delta,y}$ and \eqref{e-4.15}  we get
\begin{align}\label{e-5.9}
  I_{0}\circ \widehat{\vartheta}_{\delta,y}&=\frac{t_{\delta,y,0}^{2}}{4}(k_1+k_2)\int_{\R^N} |\nabla\vartheta_{\delta,y}|^{2}dx+\frac{t_{\delta,y,0}^{2}}{4}\int_{\R^N}\big(k_1V_1(x)|\vartheta_{\delta,y}|^2+k_2V_{2}(x)|\vartheta_{\delta,y}|^2\big)dx \nonumber \\
  & \leq \frac{t_{\delta,y,0}^{2}}{4}(k_1+k_2)\|\vartheta_{\delta,y}\|_{D^{1,2}}^{2}+\frac{t_{\delta,y,0}^{2}k_{1}}{4}\|V_1\|_{L^{\frac{N}{2}}}\|\vartheta_{\delta,y}\|_{L^{2^{*}}}^{2}
  +\frac{t_{\delta,y,0}^{2}k_{2}}{4}\|V_2\|_{L^{\frac{N}{2}}}\|\vartheta_{\delta,y}\|_{L^{2^*}}^{2} \nonumber \\
  & \leq \frac{t_{\delta,y,0}^{2}}{4}(k_1+k_2)\|\vartheta_{\delta,y}\|_{D^{1,2}}^{2}+\frac{t_{\delta,y,0}^{2}k_1}{4\mathcal{S}}\|V_1\|_{L^{\frac{N}{2}}}\|\vartheta_{\delta,y}\|_{D^{1,2}}^{2}
  +\frac{t_{\delta,y,0}^{2}k_{2}}{4\mathcal{S}}\|V_2\|_{L^{\frac{N}{2}}}\|\vartheta_{\delta,y}\|_{D^{1,2}}^{2} \nonumber \\
  & = t_{\delta,y,0}^{2}\Big(1+\frac{k_1\|V_1\|_{L^{\frac{N}{2}}}}{(k_1+k_2)\mathcal{S}}+ \frac{k_2\|V_2\|_{L^{\frac{N}{2}}}}{(k_1+k_2)\mathcal{S}}     \Big)\Sigma.
\end{align}
Notice that $(t_{\delta,y,0}\sqrt{k_1}\vartheta_{\delta,y},  t_{\delta,y,0}\sqrt{k_2}\vartheta_{\delta,y} )\in \mathcal{N}_{0}$, then
\begin{align*}
 &  \quad t_{\delta,y,0}^{2}(k_1+k_2)\int_{\R^N}|\nabla \vartheta_{\delta,y}|^{2}dx+ t_{\delta,y,0}^{2} \int_{\R^N}\big(k_1V_1(x)+k_2V_2(x)\big)|\vartheta_{\delta,y}|^{2}dx \\
 &=t_{\delta,y,0}^{4}\int_{\R^N}(k_1^2\mu_1 +2\beta k_1 k_2  +k_2^{2}\mu_2^{2})(|x|^{-4}*|\vartheta_{\delta,y}|^{2})|\vartheta_{\delta,y}|^{2}dx\\
&=t_{\delta,y,0}^{4}(k_1+k_2)\int_{\R^N}(|x|^{-4}*|\vartheta_{\delta,y}|^{2})|\vartheta_{\delta,y}|^{2}dx,
\end{align*}
that is,
\begin{equation*}
t_{\delta,y,0}^{2}= \frac{(k_1+k_2)\int_{\R^N}|\nabla \vartheta_{\delta,y}|^{2}dx+ \int_{\R^N}\big(k_1V_1(x)+k_2V_2(x)\big)|\vartheta_{\delta,y}|^{2}dx}{(k_1+k_2)\int_{\R^N}(|x|^{-4}*|\vartheta_{\delta,y}|^{2})|\vartheta_{\delta,y}|^{2}dx}.
\end{equation*}
Since $(\sqrt{k_1}\vartheta_{\delta,y}, \sqrt{k_2}\vartheta_{\delta,y} )\in \mathcal{N}_{\infty}$,  
then
\begin{align}\label{e-5.10}
  t_{\delta,y,0}^{2}
  &= \frac{(k_1+k_2)\int_{\R^N}|\nabla \vartheta_{\delta,y}|^{2}dx+ \int_{\R^N}\big(k_1V_1(x)+k_2V_2(x)\big)|\vartheta_{\delta,y}|^{2}dx}{(k_1+k_2)
  \int_{\R^N}(|x|^{-4}*|\vartheta_{\delta,y}|^{2})|\vartheta_{\delta,y}|^{2}dx} \nonumber \\
  & \leq 1+ \frac{(k_1\|V_1\|_{L^{\frac{N}{2}}}+k_2\|V_2\|_{L^{\frac{N}{2}}})\|\vartheta\|_{L^{2^*}}^2}{(k_1+k_2)\|\vartheta\|_{D^{1,2}}^{2}} \nonumber \\
  & \leq 1+\frac{k_1\|V_1\|_{L^{\frac{N}{2}}}}{(k_1+k_2)\mathcal{S}}+ \frac{k_2\|V_2\|_{L^{\frac{N}{2}}}}{(k_1+k_2)\mathcal{S}}.
\end{align}
Let us insert \eqref{e-5.10} into \eqref{e-5.9}, then by  \eqref{guo-e-4.15}  and \eqref{e-4.15} we get
\begin{align*}
I_{0}\circ \widehat{\vartheta}_{\delta,y}& \leq \Big( 1+\frac{k_1\|V_1\|_{L^{\frac{N}{2}}}}{(k_1+k_2)\mathcal{S}}+ \frac{k_2\|V_2\|_{L^{\frac{N}{2}}}}{(k_1+k_2)\mathcal{S}}    \Big)^{2}\Sigma \\
& < \Big(1+\frac{k_1\|V_1\|_{L^{\frac{N}{2}}}}{(k_1+k_2)\mathcal{S}}+ \frac{k_2\|V_2\|_{L^{\frac{N}{2}}}}{(k_1+k_2)\mathcal{S}}     \Big)^{2}\bar{c} \\
&=  \Big(1+\frac{\beta-\mu_2}{(2\beta-\mu_1-\mu_2)\mathcal{S}}\|V_1\|_{L^{\frac{N}{2}}}+  \frac{\beta-\mu_1}{(2\beta-\mu_1-\mu_2)\mathcal{S}}\|V_2\|_{L^{\frac{N}{2}}}    \Big)^{2}\bar{c}  \\
&=  \frac{\bar{c}}{\mathcal{S}^2}  \Big(\mathcal{S}+\frac{\beta-\mu_2}{(2\beta-\mu_1-\mu_2)}\|V_1\|_{L^{\frac{N}{2}}}+  \frac{\beta-\mu_1}{(2\beta-\mu_1-\mu_2)}\|V_2\|_{L^{\frac{N}{2}}}    \Big)^{2}\\
& \leq  \min \Big\{\frac{\mathcal{S}_{HL}^2}{4\mu_1}, \frac{\mathcal{S}_{HL}^2}{4\mu_2}, 2c_{\infty}  \Big\}.
\end{align*}
\end{proof}


\begin{lem}\label{lm5.5}
 Let $\delta_1,\delta_2,\bar{r},\mathscr{H}$ as in Lemma \ref{lm5.2}, then there exists a number $\lambda^*>0$ such that for each $\lambda:=\max\{\lambda_1,\lambda_2\} \in (0,\lambda^*)$, the following relations hold:
 \begin{equation}\label{e-5.12}
   \gamma  \circ \widetilde{\vartheta}_{\delta_1,y} <\frac{1}{2}, \  \ \forall y\in \R^N,
 \end{equation}
 \begin{equation}\label{e-5.12-b}
   \gamma \circ \widetilde{\vartheta}_{\delta_2,y}>\frac{1}{2},
   \ \ \ \ \forall y\in \R^N, \ \ |y|<\bar{r}
 \end{equation}
 and
 \begin{equation}\label{e-5.13}
   \tilde{\mathcal{K}}:=\sup\{I \circ \widetilde{\vartheta}_{\delta,y} :(\delta,y)\in \partial \mathscr{H}  \}<\bar{c}.
 \end{equation}
 Furthermore, if $(A_3)$ holds, then $\lambda^{**}$ can be found such that  for each  $\lambda:=\max\{\lambda_1,\lambda_2\}  \in (0, \lambda^{**})$, in addition to \eqref{e-5.12}-\eqref{e-5.13},
 \begin{equation}\label{e-5.14}
   \tilde{s}:=\sup\{I\circ \widetilde{\vartheta}_{\delta,y}:(\delta,y)\in  \mathscr{H} \}< \min \Big\{\frac{\mathcal{S}_{HL}^2}{4\mu_1}, \frac{\mathcal{S}_{HL}^2}{4\mu_2}, 2c_{\infty}  \Big\}
 \end{equation}
 is also satisfied.
 \end{lem}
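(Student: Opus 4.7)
The two $\gamma$-inequalities \eqref{e-5.12} and \eqref{e-5.12-b} actually require no smallness assumption on $\lambda$: by the scale invariance \eqref{inv} one has $\gamma\circ\widetilde{\vartheta}_{\delta,y}=\gamma\circ\overline{\vartheta}_{\delta,y}=\gamma\circ\widehat{\vartheta}_{\delta,y}$, so these inequalities reduce immediately to \eqref{e-5.1} and \eqref{e-5.1-b} already established in Lemma \ref{lm5.2}. Consequently the entire substance of Lemma \ref{lm5.5} lies in the energy bounds \eqref{e-5.13} and \eqref{e-5.14}.

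The plan is to show that $I\circ\widetilde{\vartheta}_{\delta,y}\to I_{0}\circ\widehat{\vartheta}_{\delta,y}$ uniformly on the compact set $\mathscr{H}$ as $\lambda\to 0^{+}$, and then to deduce \eqref{e-5.13} and \eqref{e-5.14} from Lemma \ref{lm5.2} and Lemma \ref{lm5.4} respectively. Writing out the Nehari identities that define $t_{\delta,y}$ and $t_{\delta,y,0}$, one obtains
\[
t_{\delta,y}^{2}-t_{\delta,y,0}^{2}=\frac{(k_{1}\lambda_{1}+k_{2}\lambda_{2})\int_{\R^N}|\vartheta_{\delta,y}|^{2}dx}{(k_{1}+k_{2})\int_{\R^N}(|x|^{-4}*|\vartheta_{\delta,y}|^{2})|\vartheta_{\delta,y}|^{2}dx}.
\]
Since $\vartheta\in C_{0}^{\infty}(B_{1}(0))$, a change of variables gives $\int_{\R^N}|\vartheta_{\delta,y}|^{2}dx=\delta^{2}\|\vartheta\|_{L^{2}}^{2}\leq\delta_{2}^{2}\|\vartheta\|_{L^{2}}^{2}$ uniformly on $\mathscr{H}$, while $\int_{\R^N}(|x|^{-4}*|\vartheta_{\delta,y}|^{2})|\vartheta_{\delta,y}|^{2}dx$ is scale-invariant and strictly larger than $\mathcal{S}_{HL}^{2}$ by \eqref{e-4.15}$(iii)$. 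Hence $t_{\delta,y}^{2}-t_{\delta,y,0}^{2}=O(\lambda)$ uniformly on $\mathscr{H}$. Combining this estimate with the Nehari-level formula $I(u,v)=\tfrac{1}{4}\|(u,v)\|_{H}^{2}+\tfrac{1}{4}\int_{\R^N}(V_{1}u^{2}+V_{2}v^{2})dx$ valid for $(u,v)\in\mathcal{N}$, together with its analogue for $I_{0}$ on $\mathcal{N}_{0}$, yields
\[
\sup_{(\delta,y)\in\mathscr{H}}\bigl|I\circ\widetilde{\vartheta}_{\delta,y}-I_{0}\circ\widehat{\vartheta}_{\delta,y}\bigr|=O(\lambda).
\]

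With this uniform comparison in hand, the conclusion is immediate. By \eqref{e-5.2}, $\sup_{\partial\mathscr{H}}I_{0}\circ\widehat{\vartheta}_{\delta,y}$ is strictly less than $\bar{c}$, so for $\lambda$ smaller than some $\lambda^{*}>0$ we still have $\sup_{\partial\mathscr{H}}I\circ\widetilde{\vartheta}_{\delta,y}<\bar{c}$, which is \eqref{e-5.13}. Under $(A_{3})$, Lemma \ref{lm5.4} provides the strict inequality $\mathcal{K}<\min\{\mathcal{S}_{HL}^{2}/(4\mu_{1}),\mathcal{S}_{HL}^{2}/(4\mu_{2}),2c_{\infty}\}$; choosing $\lambda^{**}\in(0,\lambda^{*})$ small enough preserves this strict gap after adding the $O(\lambda)$ perturbation, giving \eqref{e-5.14}. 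The only point requiring care, and hence the main (though mild) obstacle, is the uniform boundedness of every integral involving $\vartheta_{\delta,y}$ over the compact parameter region $\mathscr{H}=[\delta_{1},\delta_{2}]\times\overline{B_{\bar{r}}(0)}$; this is routine because $\vartheta$ is a fixed smooth compactly supported function and $\mathscr{H}$ is compact, but it must be verified simultaneously for the numerator, the denominator, and the $V_j$-integrals to ensure the $O(\lambda)$ estimate is truly uniform.
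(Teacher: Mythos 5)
Your proposal is correct and follows essentially the same route as the paper: reduce the $\gamma$-inequalities to Lemma \ref{lm5.2} via the scale invariance \eqref{inv}, then show $t_{\delta,y}\to t_{\delta,y,0}$ uniformly on the compact set $\mathscr{H}$ as $\lambda\to 0^{+}$ (the paper records precisely this as \eqref{add-g1}--\eqref{add-g2}), and conclude \eqref{e-5.13}, \eqref{e-5.14} from the strict inequalities \eqref{e-5.2} and \eqref{e-5.8}. Your explicit closed-form expression for $t_{\delta,y}^{2}-t_{\delta,y,0}^{2}$ and the Nehari-level identity make the $O(\lambda)$ bound a bit more transparent than the paper's terse statement, but the underlying argument is identical.
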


 \begin{proof}
 From \eqref{inv}, we know that
  \begin{equation*}
   \gamma \circ \widetilde{\vartheta}_{\delta,y}=\gamma \circ \widehat{\vartheta}_{\delta,y}, \ \ \forall (\delta,y)\in \R^+ \times \R^N.
  \end{equation*}
  Then \eqref{e-5.12} and \eqref{e-5.12-b} can be seen as a direct consequence of  Lemma \ref{lm5.2}. Recalling that $(\sqrt{k_1}\vartheta_{\delta,y},\sqrt{k_2}\vartheta_{\delta,y})\in \mathcal{N}_{\infty}$ and $\widetilde{\vartheta}_{\delta,y}:=(t_{\delta,y}\sqrt{k_1}\vartheta_{\delta,y}, t_{\delta,y}\sqrt{k_2}\vartheta_{\delta,y})\in \mathcal{N}$ ,   then  by computation we get
 \begin{align*}
   1&=\frac{(k_1+k_2)\int_{\R^N}|\nabla \vartheta_{\delta,y}|^{2}dx}{(\mu_1k_1^2+2\beta k_1k_2+ \mu_2 k_{2}^{2})\int_{\R^N}(|x|^{-4}*| \vartheta_{\delta,y}|^{2})| \vartheta_{\delta,y}|^{2}dx} \\
   &= t_{\delta,y}^{2}-\frac{k_1\int_{\R^N}(V_1(x)+\lambda_1 )|\vartheta_{\delta,y}|^{2}dx+k_2\int_{\R^N}(V_2(x)+\lambda_2 )|\vartheta_{\delta,y}|^{2}dx }{(k_1+k_2)\int_{\R^N}(|x|^{-4}*| \vartheta_{\delta,y}|^{2})| \vartheta_{\delta,y}|^{2}dx}
 \end{align*}
  and
  \begin{equation}\label{add-g1}
    \int_{\R^N}\lambda_j|\vartheta_{\delta,y}|^{2}dx=\lambda_j\delta^{2}\int_{B_{1}(0)}|\vartheta|^{2}dx,
  \end{equation}
  which implies that
  \begin{equation}\label{add-g2}
 \lim_{\lambda \to 0}\sup_{(\delta,y)\in \mathscr{H}}|t_{\delta,y}-t_{\delta,y,0}|=0,
  \end{equation}
 where $\lambda:=\max\{\lambda_1, \lambda_2\}$. Thus, if $\lambda$ is suitably small, then  \eqref{e-5.13} and \eqref{e-5.14} follows straightly by   \eqref{e-5.2}, \eqref{e-5.8}, \eqref{add-g1}
 and \eqref{add-g2}.
\end{proof}

With the help of the previous estimates, we can prove Theorem \ref{Th1.2} by deformation arguments that we can use the global compactness result obtained in Section 3. Before the proof, we first introduce a notation for the sublevel sets:
\begin{equation*}
I^{c}:=\{ (u,v) \in \mathcal{N}: I(u,v)\leq c\}.
\end{equation*}

\textbf{Proof of Theorem \ref{Th1.2}} \,\, In this part, we always assume that  $\lambda\in (0,\lambda^{*})$, where $\lambda^{*}$ is stated in Lemma \ref{lm5.5}. To prove Theorem \ref{Th1.2}, we first  prove that a critical level exists in  $(c_{\infty}, \bar{c})$ under assumptions $(A_1)$ and $(A_2)$. If in additional $(A_3)$ holds and $\lambda\in (0, \lambda^{**})$, then another critical level exists in $(\bar{c}, \min\{\frac{\mathcal{S}_{HL}^{2}}{4\mu_1}, \frac{\mathcal{S}_{HL}^{2}}{4\mu_2}, 2c_{\infty} \})$.

{\bf Step 1.} Let us first assume that $(A_1)$ and $(A_2)$ hold. By using   \eqref{e-4.9}, \eqref{e-4.1}, \eqref{e-5.3}, \eqref{e-5.13} and \eqref{e-4.14}, we have
 \begin{equation}\label{e-4.24}
   c_{\infty}< c^{**}\leq  I \circ \widetilde{\vartheta}_{\tilde{\delta},\tilde{y}} \leq
   \tilde{\mathcal{K}}< \bar{c}< c^{*}.
 \end{equation}
 
We  claim that the functional $I|_\mathcal{N}$ has a critical level in  $(c_{\infty}, \bar{c})$.  We argue by contradiction and suppose that it is incorrect. From Corollary \ref{co3.4}, we know that $I$ constrained on $\mathcal{N}$ satisfies the Palais-Smale condition in the energy interval $(c_\infty, \bar{c})$. Then in virtue of the deformation lemma ( \cite[Lemma 2.3]{WM}), we can prove that there exist a  constant $\sigma_1>0$ such that $c^{**}-\sigma_1>c_{\infty}$, $\tilde{\mathcal{K}}+\sigma_1<\bar{c}$ and a continuous function
\begin{equation*}
  \eta:[0,1]\times I^{\tilde{\mathcal{K}}+\sigma_1}\to I^{\tilde{\mathcal{K}}+\sigma_1}
\end{equation*}
such that
\begin{align}\label{e-5.16}
  \eta  \big(0,(u,v)\big)&=(u,v) \ \ \ \ \ \ \ \ \forall (u,v)\in I^{\tilde{\mathcal{K}}+\sigma_1}; \nonumber \\
  \eta   \big(s,(u,v)\big) &=(u,v) \ \ \ \ \ \ \ \ \forall (u,v) \in I^{c^{**}-\sigma_1}, \,\, \forall s\in[0,1]; \nonumber \\
  I\circ \eta  \big(s,(u,v)\big)&\leq I(u,v) \ \ \ \ \ \ \  \forall (u,v)\in I^{\tilde{\mathcal{K}}+\sigma_1}, \,\, \forall s\in [0,1]; \\
  \eta  (1,I^{\tilde{\mathcal{K}}+\sigma_1})&\subset I^{c^{**}-\sigma_1}.
\end{align}
 From \eqref{e-5.13} and (5.22), we  observe that
 \begin{equation}\label{e-5.18}
   (\delta,y)\in \partial \mathscr{H} \Longrightarrow I \circ \widetilde{\vartheta}_{\delta,y} \leq \tilde{\mathcal{K}}\Longrightarrow I \circ \eta  (1,\widetilde{\vartheta}_{\delta,y})\leq c^{**}-\sigma_1.
 \end{equation}
For $s\in [0,1]$ and $(\delta,y)\in \mathscr{H}$, we define
 \begin{equation*}
   \Gamma(\delta,y,s)=
   \begin{cases}
    \mathcal{T}(\delta,y,2s), \ \  &s\in [0,\frac{1}{2}]; \\
    \big( \gamma\circ \eta (2s-1, \widetilde{\vartheta}_{\delta,y}), \,\, \xi\circ \eta(2s-1, \widetilde{\vartheta}_{\delta,y})  \big), \ \ & s\in [\frac{1}{2},1 ],
   \end{cases}
 \end{equation*}
  where $ \mathcal{T}$ is defined as \eqref{e-5.5}. As shown in Lemma \ref{lm5.3}, we have already proved
  \begin{equation}\label{e-5.19}
    \forall s\in [0,\frac{1}{2}], \ \ \forall (\delta,y)\in \partial \mathscr{H}, \ \ \Gamma(\delta,y,s)\neq (\frac{1}{2},0).
  \end{equation}
 Moreover, by \eqref{e-4.14}, \eqref{e-5.13} and \eqref{e-5.16},  we deduce that
  \begin{equation*}
    I \circ \eta  (2s-1,\widetilde{\vartheta}_{\delta,y})\leq I \circ \widetilde{\vartheta}_{\delta,y} \leq \tilde{\mathcal{K}} <\bar{c}< c^*,
    \ \  \forall s\in[\frac{1}{2},1], \,\, (\delta,y)\in \partial\mathscr{H},
  \end{equation*}
which implies that
\begin{equation}\label{e-5.20}
\forall s\in [\frac{1}{2},1], \ \ \forall (\delta,y)\in \partial \mathscr{H}, \ \ \Gamma(\delta,y,s)\neq (\frac{1}{2},0).
\end{equation}
Furthermore, by \eqref{e-5.19}, \eqref{e-5.20} and the continuity of $\Gamma$, we prove that there exists $(\check{\delta}, \check{y}) \in \partial \mathscr{H}$ such that
\begin{equation}\label{e-5.21}
  \xi\circ \eta  (1,\widetilde{\vartheta}_{\check{\delta}, \check{y}})=0, \ \ \ \ \gamma \circ \eta   (1, \widetilde{\vartheta}_{\check{\delta},\check{y}})\geq \frac{1}{2}.
\end{equation}
In view of the definition of $c^{**}$ and \eqref{e-5.21}, we then get
$$
I\circ \eta (1,\widetilde{\vartheta}_{\check{\delta},\check{y}})\geq c^{**}.
$$
 which contradicts with \eqref{e-5.18}. Therefore,  the functional $I|_{\mathcal{N}}$ has at least a  critical point $(u_{\ell}, v_{\ell})\in \mathcal{N}$ such that $I(u_{\ell}, v_{\ell})\in (c_{\infty},\bar{c})$. And  $(u_{\ell}, v_{\ell})$  is also a critical  point of $I$, since $\mathcal{N}$ is a natural constraint. Moreover, $u_\ell\geq 0$ and $v_\ell\geq 0$. Note that $I(u_{\ell}, v_{\ell})<\bar{c}<\min \Big\{\frac{\mathcal{S}_{HL}^2}{4\mu_1}, \frac{\mathcal{S}_{HL}^2}{4\mu_2}\Big\}$, then by Corollary 2.3, we  prove that $u_\ell \not\equiv 0$ and $v_\ell \not\equiv0$.
Since $V_j(x)\in L_{loc}^q(\R^N)$ with $q>\frac{N}{2}$, then $u_\ell, v_\ell$ satisfy
$$
-\Delta u_\ell=a(x)u_\ell, \qquad -\Delta v_\ell=b(x)v_\ell
$$
where
$$
a(x)=-(V_1(x)+\lambda_1)+\mu_1(|x|^{-4}*u_\ell^{2})+\beta (|x|^{-4}*v_\ell^{2})\in L^{\frac{N}{2}}_{loc}(\R^N),
$$
$$
b(x)=-(V_2(x)+\lambda_2)+\mu_2(|x|^{-4}*v_\ell^{2})+\beta (|x|^{-4}*u_\ell^{2})\in L^{\frac{N}{2}}_{loc}(\R^N).
$$
 The Br\'ezis-Kato theorem (see \cite{BK}) implies that $u_\ell,v_\ell\in L_{loc}^{q_1}(q_1)$ for all $1\leq q_1<\infty$. Thus, $u_\ell,v_\ell\in \mathcal{C}^{0,\alpha}$ by classical  regularity results.
Hence, by the Harnack inequality (see \cite{PSbook}),  we conclude $u_{\ell}>0$ and  $v_{\ell}>0$.
Thus, we  succeed in proving that  $(u_{\ell}, v_{\ell})$ is a positive solution of system \eqref{S-4} and also of system \eqref{S}, and concluding the first part proof of Theorem \ref{Th1.2}.

{\bf Step 2.} In the sequel, we assume that  $(A_3)$ holds true also. From  \eqref{e-4.14}, \eqref{e-4.1}, \eqref{e-5.4} and \eqref{e-5.14}, we get
\begin{equation*}
  \bar{c}<c^* \leq I \circ (\widetilde{\vartheta}_{\bar{\delta},\bar{y}})\leq \tilde{s}<\min \Big\{\frac{\mathcal{S}_{HL}^2}{4\mu_1}, \frac{\mathcal{S}_{HL}^2}{4\mu_2}, 2c_{\infty}  \Big\}.
\end{equation*}
 We are ready to show that  $I|_{\mathcal{N}}$ has a critical level in the interval $(\bar{c}, \min \big\{\frac{\mathcal{S}_{HL}^2}{4\mu_1}, \frac{\mathcal{S}_{HL}^2}{4\mu_2}, 2c_{\infty}  \big\})$.   Arguing  as above,  we suppose by contradiction that there does not exist any critical levels in $(\bar{c}, \min \big\{\frac{\mathcal{S}_{HL}^2}{4\mu_1}, \frac{\mathcal{S}_{HL}^2}{4\mu_2}, 2c_{\infty}  \big\})$.  By Corollary \ref{co3.4} again, we know that  $I|_{\mathcal{N}}$ satisfies the Palais-Smale condition in  $(\bar{c}, \min \big\{\frac{\mathcal{S}_{HL}^2}{4\mu_1}, \frac{\mathcal{S}_{HL}^2}{4\mu_2}, 2c_{\infty}  \big\})$. By the deformation lemma again, then we prove that there exist a positive number $\sigma_2$ such that
$$c^*-\sigma_2>\bar{c}, \quad \tilde{s}+\sigma_2< \min \big\{\frac{\mathcal{S}_{HL}^2}{4\mu_1}, \frac{\mathcal{S}_{HL}^2}{4\mu_2}, 2c_{\infty} \big\}$$
 and a continuous function
\begin{equation*}
\hat{\eta}:[0,1]\times I^{\tilde{s}+\sigma_2}\to I^{\tilde{s}+\sigma_2}
\end{equation*}
such that
\begin{align*}
  \hat{\eta}  \big(0,(u,v)\big)&=(u,v) \ \ \ \ \ \ \ \ \forall (u,v)\in I^{\tilde{s}+\sigma_2}; \nonumber \\
  \hat{\eta}   \big(s,(u,v)\big) &=(u,v) \ \ \ \ \ \ \ \ \forall (u,v) \in I^{c^{*}-\sigma_2}, \,\, \forall s\in[0,1]; \nonumber \\
  I\circ \hat{\eta } \big(s,(u,v)\big)&\leq I(u,v) \ \ \ \ \ \ \  \forall (u,v)\in I^{\tilde{s}+\sigma_2}, \,\, \forall s\in [0,1]; \\
  \hat{\eta}  (1,I^{\tilde{s}+\sigma_2})&\subset I^{c^{*}-\sigma_2}.
\end{align*}
With the help of \eqref{e-5.14} and the properties listed above ,  we can easily find that
\begin{equation}\label{Guo-New-2}
 \forall(\delta,y)\in \mathscr{H}\Rightarrow I\circ  \widetilde{\vartheta}_{\delta,y} \leq \tilde{s} \Rightarrow I\circ \hat{\eta}(1, \widetilde{\vartheta}_{\delta,y}) \leq c^*-\sigma_2.
\end{equation}
Let $s\in [0,1]$ and $(\delta,y)\in \mathscr{H}$, we define the map
\begin{equation*}
  \hat{ \Gamma}(\delta,y,s)=
   \begin{cases}
    \mathcal{T}(\delta,y,2s), \ \  &s\in [0,\frac{1}{2}]; \\
    \big( \gamma\circ \hat{\eta} (2s-1, \widetilde{\vartheta}_{\delta,y}), \,\, \xi\circ \hat{\eta}(2s-1, \widetilde{\vartheta}_{\delta,y})  \big), \ \ & s\in [\frac{1}{2},1 ],
   \end{cases}
 \end{equation*}
where $ \mathcal{T}$ is defined in \eqref{e-5.5}.  As proved in Lemma \ref{lm5.3}, we have
\begin{equation*}
    \forall s\in [0,\frac{1}{2}], \ \ \forall (\delta,y)\in \partial \mathscr{H}, \ \ \hat{\Gamma}(\delta,y,s)\neq (\frac{1}{2},0).
  \end{equation*}
On the other hand, since
\begin{equation*}
 \forall (\delta,y )\in \partial \mathscr{H}\Rightarrow  I\circ \widetilde{\vartheta}_{\delta,y} \leq \tilde{\mathcal{K}}<\bar{c}<c^*-\sigma_2,
\end{equation*}
then
$$
 \hat{\Gamma}(\delta,y,s)= \hat{\Gamma}(\delta,y,\frac{1}{2})=\mathcal{T}(\delta,y,1), \ \  \forall s\in [\frac{1}{2},1], \ \forall (\delta,y)\in \partial \mathscr{H}.
$$
So
\begin{equation*}
 \hat{\Gamma}(\delta,y,s)\neq (\frac{1}{2},0), \ \ \forall s\in [\frac{1}{2},1], \ \forall (\delta,y)\in \partial \mathscr{H}.
\end{equation*}
By  the homotopy invariance of topological degree together with \eqref{e-5.6}, we prove that there exist a pair of $(\hat{\sigma}, \hat{y})\in \mathscr{H}$ such that
\begin{equation*}
  \xi\circ \hat\eta  (1,\widetilde{\vartheta}_{\hat{\delta}, \hat{y}})=0, \ \ \ \ \gamma \circ \hat\eta   (1, \widetilde{\vartheta}_{\hat{\delta},\hat{y}})= \frac{1}{2}
\end{equation*}
and
$$
I\circ \hat{\eta} (1,\widetilde{\vartheta}_{\hat{\delta},\hat{y}})\geq c^{*},
$$
which  contradicts with \eqref{Guo-New-2}.  Then the constrained functional $I|_{\mathcal{N}}$ has at least a critical point $(u_h,v_h)$ with  $I(u_h,v_h) \in (\bar{c}, \min\{\frac{\mathcal{S}_{HL}^2}{4\mu_1},\frac{\mathcal{S}_{HL}^2}{4\mu_2}, 2c_{\infty}\})$. Moreover, $(u_h,v_h)$  is a critical point of functional $I$ and $(u_h,v_{h})$ is definite in sign.  Arguing by the same methods in Step 1, we can also prove  that $(u_h,v_h)$ is a positive solution of system \eqref{S-4}, and also of system \eqref{S}.  Finally, we complete the whole proof of Theorem \ref{Th1.2}.


\medskip
\textbf{Acknowledgment}

\medskip
The authors would like to thank  Prof. Riccardo Molle  for fruitful discussion on this topic.
The research  was supported by the National Natural Science Foundation of China (No. 12201474, No. 11901222 and No. 12071169) and and the China Postdoctoral Science Foundation (No. 2021M690039).

\medskip

\medskip

\end{document}